\documentclass[11pt,english]{article}
\usepackage[T1]{fontenc}
\usepackage{amsmath}

\DeclareMathOperator*{\argmin}{arg\,min}
\usepackage{amsthm}
\usepackage{amssymb}
\usepackage{appendix}
\usepackage[colorinlistoftodos]{todonotes}
\usepackage[nobysame]{amsrefs}
\usepackage{breqn}
\usepackage{dsfont}
\usepackage{thmtools,thm-restate}
\usepackage{hyperref}
\usepackage{algorithm,algorithmic}

\makeatletter
\theoremstyle{plain}
\newtheorem{theorem}{\protect\theoremname}[section]
\theoremstyle{plain}

\theoremstyle{plain}
\newtheorem{proposition}{\protect\propositionname}[section]
\theoremstyle{plain}
\newtheorem{corollary}{\protect\corname}[section]
\theoremstyle{plain}

\theoremstyle{definition}
\newtheorem{definition}{\protect\definitionname}[section]
\theoremstyle{definition}

\newtheorem{assumption}{Assumption}[section]
\ifx\proof\undefined
\newenvironment{proof}[1][\protect\proofname]{\par
	\normalfont\topsep6\p@\@plus6\p@\relax
	\trivlist
	\itemindent\parindent
	\item[\hskip\labelsep\scshape #1]\ignorespaces
}{%
	\endtrivlist\@endpefalse
}
\providecommand{\proofname}{Proof}
\fi

\usepackage{fullpage}

\usepackage{times}

\makeatother

\usepackage{babel}
\providecommand{\definitionname}{Definition}
\providecommand{\theoremname}{Theorem}
\providecommand{\lemmaname}{Lemma}
\providecommand{\propositionname}{Proposition}
\providecommand{\propertyname}{Property}
\providecommand{\examplename}{Example}
\providecommand{\corname}{Corollary}

\usepackage{pgfgantt}
\usepackage{bigints}
\usepackage{listings}
\usepackage{verbatim}

\usepackage{float}
\usepackage{subcaption}
\usepackage{fullpage}
\usepackage{authblk}
\newcommand{\N}{\ensuremath{\mathbb{N}}}

\definecolor{mygreen}{RGB}{28,172,0}
\definecolor{mylilas}{RGB}{170,55,241}
\usepackage{color}

\begin{document}
\title{To Control or not to Control}
%
%

\author[1]{Odysseas Kanavetas}
\author[2]{Camiel M.P. Koopmans}
 \author[2]{Floske M. Spieksma}
 \affil[1]{ Department of Mathematics, University of Patras, Greece}
 \affil[2]{ Mathematisch Instituut, Universiteit Leiden, the Netherlands}
 
%
\pagenumbering{gobble}
\maketitle              
\begin{abstract}
We introduce a model with control vacations instead of standard queueing control systems with permanent control.
        The researched model is an M/M/1 queue with temporary periods of service rate control with two available service rates. After a control period of exponentially distributed length, a control vacation is initiated during which a fixed service rate $\mu$ is used. The start of the next control period needs to be scheduled directly at a certain cost.
 We will use the Markov Decision Process from Kanavetas et al.~\cite{Koopmans} to find a sufficient condition that ensures that the average expected cost can be reduced w.r.t. the model that only uses the fixed service rate.
Under this condition we will use properties of this related process to construct a cost reducing policy. The process with control vacations under specific policies induces a renewal reward process.
 We use a Tauberian theorem to relate the average expected cost of this renewal reward process to a vanishing discount method and analytically determine a lower bound of the average expected cost reduction.
Finally, we study the actual attained average cost reduction for these policies through simulation.

  

\end{abstract}

\newpage
\pagenumbering{arabic}
\section{Introduction}\label{Introduction}

In this paper, we consider an M/M/1 queue where a
baseline of service is guaranteed for the system, and availability of an alternative/additional server/machine/processor is restricted by vacations or breakdowns. As a result, there is an availability of service rate control until vacation/breakdown of the additional service mode.

An example of this idea can be found on highways with rush-hour lanes as in the Netherlands. When traffic intensity is high, rush hour lanes are opened to increase traffic flow and reduce congestion. When there are sensor disruptions, the rush hour lanes remain closed for safety reasons~\cite{spitsstrookrws}. This has happened on multiple occasions (see \cite{traffic3,traffic1,traffic2}). In this framework there is a natural trade-off between safety, traffic congestion, and costs for sensor repairs.

Vacation in queueing models has been introduced by Levi and Yechiali~\cite{levy1975utilization} in 1975. Here, a server vacation can occur as the server, when idle, can be utilised for service to a secondary system. 
Since then, extensive research has been conducted on vacation models, these vacations have been restricted to server vacations, as substantiated by the following works~\cite{doshi1986queueing,takagi1991queueing,tian2006vacation,ke2010recent}.

As Doshi~\cite{doshi1986queueing} describes, there are multiple reasons for vacation models to arise including (machine) breakdowns, maintenance, or specific queueing disciplines. Doshi also mentions a possible connection to priority models, which in a similar context to this paper can occur with a low priority queue and a high priority queue. The fastest service mode is ensured to the high priority queue, such that  this fastest service speed is only available for the low priority queue when the high priority queue is idle. 

The vacations in the rush-hour lane example and the implementation of the vacations in this article suit the type induced by machine breakdowns. At a breakdown, only the baseline of service is available until repair of the machine, which comes at a cost. This leaves the repair time of the machine to be scheduled with the resulting effects on the model.

The objective of the service rate control is to minimise all costs. One component of this cost is the service rate cost, which is either the direct costs for the speed of service or an indirect cost through a difference in the service quality (cf.~\cite{gryna,rust}). The second component of the cost are the \textit{holding costs} for the amount of customers in the system. These costs are incurred due to customer dissatisfaction/costs of waiting for the customers or exhaustion of resources by the customers. Furthermore, repair costs are incurred to regain service rate control.

The subjects of research we consider for this model are as follows
\begin{itemize}
    \item The existence and construction of policies that reduce the average expected cost w.r.t. the baseline of service.
    \item The size of the attained average expected cost reduction w.r.t. the baseline of service. 
\end{itemize}

In order to tackle the research objectives, research from Kanavetas et al.~\cite{Koopmans} into the saved cost from a single period of service rate control will prove to be valuable. Both the amount of cost saved, and optimal policies will be of use for this article.

The model we will consider with the scheduling of repair times bears similarities with the subject of controlled server vacations. For research on controlled server vacations we refer to~\cite{convac1,convac2,convac3}. 


A different viewpoint of this type of model would be through \textit{control vacations} for queueing models, where a certain method of queueing control is unavailable for temporary time. Through this viewpoint, numerous research questions and models (with service rate control, admission control and/or priority queues) can be constructed for further research.

\subsection*{Problem description}

The model studied in this paper is an M/M/1 queue with the option to pay $c_r$ for the repair/instalment of a sensor that observes the length of the queue and guides service rate control. 
 Customers arrive to the queue according to a Poisson$(\lambda)$ process.
We assume exponentially distributed service times.
 Additionally, the sensor has an exp($\beta$) distributed lifetime.
 During this period, the system is controlled by selecting one of two service rates $\mu_1$ or $\mu_2$ depending on the queue length. 
 At the instant the sensor breaks down, there is the option to an instantaneous repair, or to schedule a future repair time. Note that, before sensor repair,  no new information is gained that would affect the earlier choice of the repair time.
 During sensor failure, the server operates at a baseline rate $\mu\in \{\mu_1,\mu_2\}$ with $\mu>\lambda$ to ensure stability.



 The state of the system is denoted by the number of customers in the system and whether the sensor is active or not. If the sensor is not active then the state also includes the time until the next scheduled sensor repair.
 Thus, the state space is given by
$$S=\{ i,(i,0),(i,1) : i\in \N_0 \}\cup \{ (i,0,t) :  i\in \N_0,t\in\mathbb{R}_{\geq 0}\} .$$
In these states, $i$ denotes the number of customers in the system. The sensor is active in 
states $(i,1)\in S$.
The sensor is inactive in states $(i,0)\in S$ and $(i,0,t)\in S$, where in the latter, $t$ denotes the remaining time until repair. In state $(i,0)$ there will not be any future sensor repair.
Finally, state $i\in S$ is used for initialisation of the process with a broken sensor, where the repair time will be chosen instantaneously. 

The state changes upon arrivals, departures, sensor failures, or repairs.
We restrict the decision of the time until repair to deterministically depend on the amount of customers in the system at the moment the sensor breaks down.
From a modelling perspective this leads to action space 
 $$\mathcal{A}(i,1)=\{(a,t) : a\in \{\mu_1,\mu_2\},\ t\in \mathbb{R}_{\geq 0}\cup\{\infty\}\},\ \ \text{for } (i,1)\in S,$$  where $a$ denotes the chosen service rate and $t$ denotes the scheduled repair time if the sensor would break down in this state (where $t=\infty$ would mean that the sensor will never be repaired again). In states $(i,0)\in S $ and $(i,0,t)\in S$, service rate $\mu$ is used until the sensor is repaired. Thus,  
   $\mathcal{A}(i,0)=\mathcal{A}(i,0,t)=\{\mu\}$.
 In states $i\in S$, the action space is $\mathcal{A}(i)=\{t :  t\in \mathbb{R}_{\geq 0}\cup\{\infty\}\} $, with an instantaneous jump to the resulting state.

The incurred costs are the operating costs of service, the \textit{holding} costs for the amount of customers in the system and the costs $c_r$ per sensor repair. 
The service cost rates are $c_{\mu_1},c_{\mu_2}$ for the use of service rates $\mu_1,\mu_2,$ respectively. W.l.o.g., we let $0=c_{\mu_1}<c_{\mu_2}$.  
The holding cost function $h:S\rightarrow \mathbb{R}$ gives that the cost rate for the current number of customers $i$ is $h(i)$, independently of the sensor state.
We impose the following assumption on the holding cost, to impose structural properties and to ensure that the average expected cost using the baseline of service is finite.
\begin{assumption}\label{holdingassump}
  W.l.o.g., $h(0)=0$. 
The holding cost function $h$ is non-negative, non-decreasing and convex in the customer amount, and non-constant. 
Furthermore, 
  $$  \sum_{i=0}^\infty h(i)\cdot x^i <\infty, \ \ \ \ \text{ for all } 0<x<1.$$
\end{assumption} 

 We consider the
stationary deterministic policy space $\Phi$ for our model.
As such, policies $\phi \in \Phi$, are functions that map all states $s\in S$ to an action in their action space.
The set of all stationary deterministic policies is given by policy space $\Phi$. Policy $\phi\in \Phi$ is determined by its actions in states $i,(i,1)\in S$. In states $(i,1)\in S$, policy $\phi$ can be decomposed into two functions $\phi_c:\mathbb{N}_0\rightarrow \{\mu_1,\mu_2\}$ and 
$\phi_t:\mathbb{N}_0\rightarrow \mathbb{R}_{\geq 0}\cup \{\infty\}$, so that $\phi(i,1)=(\phi_c(i),\phi_t(i))$. 
Note that  $\phi$ is completely determined by the pair $(\phi_c,\phi_t)$. For notational convenience we write $\phi=(\phi_c,\phi_t)$.
 The corresponding cost rate function $c^\phi$ is given by $c^\phi(i,0,t)=c^\phi(i,0)=c_\mu+h(i)$ and $c^\phi(i,1)=c_{\phi_c(i)}+h(i)$ for $\phi=(\phi_c,\phi_t)\in \Phi$. 

 In Kanavetas et al.~\cite{Koopmans}, an M/M/1 queue with a single period of service rate control is compared to a baseline M/M/1 queue operating under service rate $\mu$. The total saved costs w.r.t. the baseline model is approximated, and corresponding optimal or approximately optimal policies can be determined explicitly. 
 In essence, this paper compares the cost of the baseline policy $\phi_0=(\mu,\phi^\infty_t)\in \Phi$, where $\phi_t^\infty \equiv \infty$  to the cost of policies of the restricted form $\phi^\infty=(\phi_c,\phi_t^\infty)\in \Phi$. 
 Under this restriction,
  the resulting model is a Markov Decision Process (MDP), which we call the no-repair model in this paper.
In this paper, as opposed to the no-repair model, the option to initiate a service rate control period can be taken repetitively and unlimited. Correspondingly, this paper studies a structural reduction in average expected costs compared to using the baseline policy $\phi_0$ instead of a single cost save.

 In Kanavetas et al.~\cite{Koopmans}, the optimal control policy $\tilde{\phi}_c$ of the no-repair model is determined (generally by applying Lemma~5.1 and validating optimality with Remark~5.1.) and it has a simple threshold form. We denote the threshold of $\Tilde{\phi}_c$ by $i^\ast\in \mathbb{N}_0$.
 
 The dynamics between $\phi_c$ and $\phi_t$ in the general model induces a very complicated optimisation. Hence, we will fix $\phi_c$ to be the optimal no-repair policy $\tilde{\phi}_c$ and proceed to construct policies of the form $\tilde{\phi}=\phi_{A,B,\ell}\in \Phi,$
 where 
 $$\phi_{A,B,\ell}(i,1)=\begin{cases}
     (\tilde{\phi}_c(i),A)& \text{for } i\leq \ell,\\
     (\tilde{\phi}_c(i),B)& \text{for }i>\ell.
 \end{cases} $$ 
 This type of policy with two thresholds is easy to implement in applications and greatly reduces the difficulty of analysing the average expected costs compared to a general choice of $\phi_t$ as a function $S\rightarrow\mathbb{R}_{\geq 0}\cup\{\infty\}$.

 More specifically, in the case $\mu=\mu_1$, we consider policies of the form $\phi_{A,0,\ell}$ and when $\mu=\mu_2$, policies of the form $\phi_{0,B,\ell}$. Intuitively speaking, we consider direct repairs in the region of the state space where service control gives the highest cost reduction (cf. Propositions~\ref{MonotDiff} and \ref{ImprovSet}) and in the other region repair is postponed, so that the number of customers at the sensor repair time is sufficiently close to stationarity of the M/M/1 queue with rate $\mu$. 

Let the system commence in state $s\in S$ under policy $\phi\in \Phi$ and denote the state of the system at time $t$ as $X(t)$. 
 The average expected cost of the process $\{X(t)\}_{t\geq 0}$ is defined as $$g_s^\phi = \limsup_{T\rightarrow \infty} \frac{1}{T} \Bigg[\mathbb{E}_{s}\Bigg[ \int_0^T c^\phi(X(t)) dt\Bigg] + c_r\cdot \mathbb{E}^\phi_{s}\Big[ N(T) \Big] \Bigg],$$
 where $N(t)$ denotes the number of repairs up to time $t$.

 Note that $\{(i,0) :\ i\in \mathbb{N}_0\}\subseteq S$ is a positive recurrent class under any policy with average expected cost
\begin{equation} \label{eq:gf}
    g_\mu= c_{\mu}+\sum_{i=0}^\infty h(i)\cdot \pi_\mu(i),\end{equation}
where $\pi_\mu= (1-\lambda/\mu)(\lambda/\mu)^i$ is the stationary distribution of the baseline M/M/1 queue.

For the candidate policies $\phi_{A,B,\ell}$ with $A,B<\infty$, 
 the set $S\setminus \{(i,0) :\ i\in \mathbb{N}_0\}$ is a closed class containing a single positive recurrent class. $g_s^{\phi_{A,B,\ell}}$ is constant on this positive recurrent class and equal to $g_{(0,1)}^{\phi_{A,B,\ell}}$ which can differ from $g_{\mu}$.





In this paper, we aim to determine average expected cost reducing policies $\phi_{A,B,\ell}$ such that $g^{\phi_{A,B,\ell}}_{(0,1)}<g_\mu$ and we study the size of the resulting average expected cost reduction. The sufficient condition for this methodology to succeed is
that a single period of control gives a cost save that is higher than $c_r$.

The outline of this paper is as follows. 
In Section~\ref{sect:norepair}, the no-repair model of Kanavetas et al.~\cite{Koopmans} is studied.  We determine a value $\ell$ for candidate policies $\phi_{A,B,\ell}$ through the value of the single saved cost due to control in this model. 


In Section~\ref{sect:improv}, sufficient choices of $A,B$ are then derived such that $\phi_{A,B,\ell}$ gives an average expected cost reduction. This section is split into the case $\mu=\mu_1$ in Section~\ref{sect:mu1} and the case $\mu=\mu_2$ in Section~\ref{sect:mu2}. 
Using a vanishing discount, sufficient choices of $A,B$ are derived to guarantee single cost reductions when only repairing the sensor once. 
In both cases, subsequently an average expected cost reduction is derived by rewriting the discounted value function through a telescoping series over the number of sensor repairs. In the case $\mu=\mu_1$, this gives a direct average expected cost reduction, whereas in the case $\mu=\mu_2$, additional restrictions apply in determining the value of $B$ related to a bound on the probability of large queue lengths.

Finally, in Section~\ref{sect:comp},
for several parameter choices the resulting cost reducing policies are calculated with corresponding lower bounds on the average expected cost reductions.
The actual cost reductions of the improving policies are approximated through by simulation 
and are compared to the seemingly pessimistic lower bounds.



\section{The no-repair model}\label{sect:norepair}

In this section, we will analyse the no-repair model, which will provide 
the derivation of a sufficient value of $\ell$ for average expected cost reducing policies $\phi_{A,B,\ell}$ in Section~\ref{sect:ell}, and in the analysis of cost differences. 

In order to use and extend on earlier results by the authors~\cite{Koopmans}, we normalise the parameters (scaling the time and costs) such that $\lambda +\mu_1+\mu_2+\beta=1$. This timescale was needed in \cite{Koopmans} to use uniformisation~\cite{lippman,serfozo} which allows us to study the no-repair model through its discrete time equivalent. 
 
We first formalise the no-repair model.  
In particular, we restrict the state space to the closed class $S^\infty=\{ (i,1),(i,0) :  i\in \N_0 \}$ under no-repair policies.  Consequently, the associated action spaces $\mathcal{A}^\infty$ are defined by  
$\mathcal{A}^\infty(i,1)=\{\mu_1,\mu_2\}$ and $\mathcal{A}^\infty(i,0)=\{\mu\}$ for any $(i,1),(i,0)\in S^\infty$. For notational convenience, also the policy space can be restricted to control policies $\phi_c\in \Phi^\infty$.





The corresponding non-zero transition probabilities
are 

$$ p^{\phi_c}_{(i,1),s} = \begin{cases} \lambda, \   \  
&s=(i+1,1),\\
\beta,  \ \   &s=(i,0),\\
\phi_c(i) \mathds{1}_{\{i>0\}},   \ \ &s=(i-1,1),\\
1-\lambda-\beta-\phi_c(i)\mathds{1}_{\{i>0\}}, \   \ &  s=(i,1),\end{cases} $$

$$ 
   p^{\phi_c}_{(i,0),s}= \begin{cases} \lambda, \   \ &s=(i+1,0),\\
\mu\mathds{1}_{\{i>0\}}, \   \ &s=(i-1,0),\\
1-\lambda-\mu\mathds{1}_{\{i>0\}}, \   \ &s=(i,0).\end{cases}
 $$ 

As cost function, we have the earlier defined cost function $c^{\phi_c}$ and there are no repairs. 

As discount factor we take $1-\alpha$ with $0\leq \alpha <1$, 
 which is the equivalent to the continuous time discount factor $\alpha/(1-\alpha)$. 
The
 expected total discounted cost of the discrete time MDP $\{X_n
 \}_{n\in \mathbb{N
}_0}$ under $\phi_c$, starting in state $s\in S^\infty$ is given by 
\begin{equation}\label{eq:Valphadisc}
    V^{\phi_c}_{\alpha}(s)= \mathbb{E}_{s}^{\phi_c} \Big[ \sum_{n=0}^\infty (1-\alpha)^n c
    (X_n
    ) \Big].
\end{equation}


The discrete time average cost optimality equation (DAOE) for this MDP
is given by

\begin{equation*}\label{CAOE}
    g + H(x)  =\min_{\phi \in \Phi} \left\{ c^\phi(x) + \sum_{y\in S}p^\phi_{x,y}H(t) \right\}, \ \ \ x \in S,
\end{equation*}
 with average expected cost $g\in \mathbb{R}_{+}$, and relative value $ H:S\rightarrow \mathbb{R}$ as solution (cf. \cite{Koopmans}).
I.e., the DAOE for the no-repair model is given by 

\begin{align}\label{modelDAOE}
&g_\infty+H_\infty(0,0) &=& \ c_\mu+(1-\lambda)H_\infty(0,0)+\lambda H_\infty(1,0), \nonumber \\
&g_\infty+ H_\infty(i,0) &=& \ c_\mu+h(i)+\mu H(i-1,0) \nonumber\\
&&& \ \ \ +(1-\mu-\lambda) H_\infty(i,0)+\lambda H_\infty(i+1,0),  \\
&g_\infty +H_\infty(0,1) &=& \ \beta H_\infty(0,0)+(1-\beta-\lambda) H_\infty(0,1)+\lambda H_\infty(1,1), \nonumber \\
&g_\infty +H_\infty(i,1) &=& \  \min_{a\in \{\mu_1,\mu_2\}} \Big\{ c_a+h(i)+\beta H_\infty(i,0)+a H_\infty(i-1,1) \nonumber \\
&&& \hspace{2.22cm}+(1-a-\beta-\lambda) H(i,1)+\lambda H(i+1,1)\Big\}, \nonumber
\end{align}
\noindent  where $i \in \mathbb{N}_{\geq 1}$, $g_\infty=g_\mu$ is the average expected cost (Equation~(\ref{eq:gf})) and $H_\infty$ is the relative value vector.

This model can be studied through the Value Iteration (VI) algorithm, confer \cite{Koopmans}.  With the convention that $i^+=\max\{i,0\}$ for $i\in \mathbb{Z}$, the model-specific algorithm is as follows.

\begin{algorithm}[H]
\begin{algorithmic}[1]
\STATE Set $V_{0}\equiv 0$ and $n=0$.
\STATE For each $(i,1)\in S^\infty$, 
\begin{align*}
     V_{n+1}(i,1)  =  h(i)+ \min_{a\in \{\mu_1,\mu_2\}} \bigg\{ c_a + a V_{n}((i-1)^+,1)&+(1-a- \lambda-\beta) V_{n}(i,1)\bigg\}\\
     & +\lambda V_{n}(i+1,1)+\beta V_{n}(i,0) ,\\
     \phi_{n+1}(i,1)=\argmin_{a\in \{\mu_1,\mu_2\}} \bigg\{ c_a + a V_{n}((i-1)^+,1)&+(1-a- \lambda-\beta) V_{n}(i,1)\bigg\}.
\end{align*}
For $(i,0)\in S^\infty$, 
\begin{align*}
     V_{n+1}(i,0) &=  c_\mu+ h(i)+\mu V_{n}((i-1)^+,0)+(1-\mu- \lambda) V_{n}(i,0)
     +\lambda V_{n}(i+1,0) .
\end{align*}
\STATE Increment $n$ by 1 and return to step 2.
\end{algorithmic}
\caption{Value Iteration for the no-repair model 
}
\label{VIalpha}
\end{algorithm}

 In Kanavetas et al.~\cite{Koopmans} strong Blackwell optimality of the no-repair model has been derived, and convergence of the saved costs as the discount vanishes has been established. This can be formalised as follows.

\begin{restatable}{theorem}{convergenceVIstatementtwo}\label{convergenceVI2}
    There exists a unique solution pair $(g_\infty^\ast,H_\infty^\ast)$ to optimality equation~(\ref{modelDAOE}), with $H_\infty^\ast(0,0)=0$. This value $g_\infty^\ast$ is the average expected cost (independent of the starting state).
    For any $s\in S^\infty$ the following limits exist and hold:
\begin{align*}
        H_\infty^\ast(s) &= \lim_{\alpha\downarrow 0} \big( V^\ast_{\alpha}(s)-V^\ast_{\alpha}(0,0) \big)=\lim_{n\rightarrow \infty}(V_{n}(s)-V_n(0,0)),\\
        g^\ast &= \lim_{\alpha\downarrow 0} \alpha V^\ast_{\alpha}(s),\\
        \phi^\ast &= \lim_{\alpha\downarrow 0} \phi^{\alpha},
    \end{align*}
    with $\phi^\alpha$ being an $\alpha$-discounted optimal control policy 
    and $\phi^\ast$ a strongly Blackwell optimal control policy.
\end{restatable}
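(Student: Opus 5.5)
This theorem is essentially a restatement of the results of Kanavetas et al.~\cite{Koopmans}, so the plan is to reduce to the structural framework used there: the no-repair model after uniformisation is a discrete-time MDP on the countable state space $S^\infty$ with two actions per state. The standard route is a Lyapunov/weighted-norm argument that gives existence of a solution to the DAOE via the vanishing discount approach, together with the observation that there are finitely many actions per state, which makes the Blackwell-type limits work.

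\textbf{Step 1: a finite average cost and a Lyapunov function.} For every $\phi_c$, the states $(i,1)$ are transient (absorption into $\{(i,0)\}$ happens at rate $\beta$), and $\{(i,0)\}$ is a single positive recurrent class for the M/M/1 queue with rate $\mu>\lambda$. I would take a weight function $w(i,\cdot)=z^{i}$ with $1<z<\mu/\lambda$, or a suitable combination of such functions. This gives a geometric drift towards $(0,0)$ that is uniform over all policies, which in turn gives uniform $w$-geometric ergodicity. Assumption~\ref{holdingassump} guarantees that $h$ is $w'$-bounded for some such weight, since $\sum h(i)x^i<\infty$ forces $h(i)=O(x^{-i})$ for every $x<1$. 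So costs are bounded in weighted norm, and $g_\mu$ from (\ref{eq:gf}) is finite.

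\textbf{Step 2: vanishing discount.} Uniform geometric ergodicity gives that $V^\ast_\alpha(s)-V^\ast_\alpha(0,0)$ is bounded in $w$-norm uniformly in $\alpha$. It also gives that $\alpha V^\ast_\alpha(s)$ is bounded. Because the action sets are finite, a diagonal argument produces subsequential limits $(g,H)$ satisfying (\ref{modelDAOE}).

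\textbf{Step 3: uniqueness.} Any solution with $H(0,0)=0$ in the weighted space must coincide with these limits. To see this, on $\{(i,0)\}$ there is no control, so $H^\ast_\infty(i,0)$ and $g=g_\mu$ are pinned down by the Poisson equation for the M/M/1 chain. The $(i,1)$ part is then the optimal cost-to-absorption of a transient control problem with killing rate $\beta$. That problem has a contraction structure in the weighted norm (factor $1-\beta$ after uniformisation), so its solution is unique. Uniqueness upgrades the subsequential limits to full limits.

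\textbf{Step 4: value iteration and Blackwell optimality.} Convergence of $V_n(s)-V_n(0,0)$ follows from aperiodicity, which holds because the self-loops $1-\lambda-\mu>0$ are present, combined with the same contraction/ergodicity. For $\phi^\alpha\to\phi^\ast$, I would use the threshold structure established in \cite{Koopmans}. For small $\alpha$, the $\alpha$-optimal action in each state is determined by the sign of a difference $c_{\mu_2}-(\mu_2-\mu_1)(V_\alpha(i,1)-V_\alpha(i-1,1))$. This difference converges to the corresponding expression with $H^\ast_\infty$. When the limiting inequality is strict, the action eventually stabilises. Ties require a Laurent expansion in $\alpha$, which is available because under each stationary policy the discounted values are analytic in $\alpha$ near $0$ in weighted norm. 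I expect this final step, strong Blackwell optimality in a countable state space with possible ties, to be the main obstacle, and I would defer it to the corresponding result in \cite{Koopmans}.
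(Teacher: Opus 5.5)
The paper does not prove this theorem itself. It imports existence and uniqueness of $(g^\ast_\infty,H^\ast_\infty)$, convergence of value iteration and of the discounted relative values, and strong Blackwell optimality wholesale from Kanavetas et al.~\cite{Koopmans}. Its only addition is that identifying $\lim_{\alpha\downarrow 0}\alpha V^\ast_\alpha(s)$ with the average expected cost is an instance of the Tauberian theorem (Theorem~\ref{Thm:Tauber}).

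Your outline is correct and compatible with this, and you defer the Blackwell/tie step exactly where the paper defers everything. The difference is that you reconstruct the mechanism (weighted-norm vanishing discount, then a layered uniqueness argument) instead of citing it. That buys two things:
\begin{itemize}
\item You get $g^\ast_\infty=g_\mu$ directly from absorption into the positive recurrent class $\{(i,0)\}$ and its Poisson equation.
\item You make explicit that uniqueness only holds in a growth class. For every $g\in\mathbb{R}$ and every value of $H(0,1)$, the equations (\ref{modelDAOE}) can be solved recursively for $H(i+1,0)$ and $H(i+1,1)$, since $\lambda>0$. So the statement's ``unique'' must be read in a weighted space, as you do.
\end{itemize}
The paper's Tauberian route, in turn, is the tool it reuses later (Corollary~\ref{taubercor}, Theorems~\ref{mufismu1avg} and \ref{mufismu2avg}). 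Under repair policies there is no absorbing uncontrolled class, so your Poisson-equation identification has no analogue there.

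Three details to tighten:
\begin{itemize}
\item A single weight $z^i$ on both layers has no geometric drift in states $(i,1)$ where $\mu_1$ is used if $\mu_1\le\lambda$, which is allowed when $\mu=\mu_2$. Take instead, e.g., $w(i,1)=Kz^i$ and $w(i,0)=z^i$ with $K>1$ and $z>1$ close to $1$.
\item In such a norm, the killed layer-$1$ operator has modulus $1-\beta+\lambda(z-1)$ (attained at $i=0$), not $1-\beta$. This is still below $1$ for $z$ close to $1$.
\item Per-policy analyticity alone does not control $V^\ast_\alpha$, which is an infimum over infinitely many stationary policies. The deferred tie-breaking step therefore needs an extra reduction, e.g.\ via the threshold structure, as you anticipate.
\end{itemize}
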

The stated convergence towards the average expected cost follows from a Tauberian theorem (see Theorem~\ref{Thm:Tauber}).

Theorem~\ref{convergenceVI2} allows us to derive  characteristics of solutions of the DAOE through VI. These characteristics are needed to derive a sufficient choice of $\ell$ for our construction of $\phi_{A,B,\ell}$. Furthermore, the limits through a vanishing discount are of use for analysis of the general continuous model.

The structural properties of the value vectors are given
in the following proposition from \cite{Koopmans}. 

\begin{proposition} \label{increasingconvexvalues}
    Let $V_{0}\equiv0$, then
    \begin{enumerate}
        \item $V_{n}(i,0)$ and $V_{n}(i,1)$ are non-decreasing and convex in $i\in \mathbb{N}_0$ for any $n\in \mathbb{N}_0$.
        \item  In any step $n$ of VI, the policy $\phi_{n}\in \Phi^\infty$ of chosen actions is of a threshold form. I.e., there exists a threshold $i_n\in \mathbb{N}_0\cup\{\infty\}$, such that\\ $$\phi_{n}(i,1) =\begin{cases}
        \mu_1 \ \ \ \ \ \text{ if } \ \ i\leq i_{n},\\
         \mu_2 \ \ \ \ \ \text{ if } \ \ i> i_{n},
    \end{cases}$$
    where $i_{n}$ is the largest value of $i$ such that 
    \begin{equation}
    \label{eq:in}
    V_{n}(i,1)-V_{n}((i-1)^+,1) \leq \frac{c_{\mu_2}}{\mu_2-\mu_1}.
    \end{equation}
    \item $H_\infty^\ast(i,0)$ and $H_\infty^\ast(i,1)$ are non-decreasing and convex in $i$ and optimal policies are of a threshold form with threshold $i^\ast\in \mathbb{N}_0$.
    \end{enumerate}
\end{proposition}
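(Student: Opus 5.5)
We prove the three items in order, the first by induction on $n$ along Algorithm~\ref{VIalpha}, the second from the first by a marginal argument, and the third by passing to the limit using Theorem~\ref{convergenceVI2}.

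\textbf{Item 1.} The base case $V_0\equiv 0$ is trivial. For the step, assume $V_n(\cdot,0)$ and $V_n(\cdot,1)$ are non-decreasing and convex in $i$.

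For the $(i,0)$ component, $V_{n+1}(i,0)$ is $c_\mu+h(i)$ plus a combination of the operators $f\mapsto f((i-1)^+)$, $f\mapsto f(i)$ and $f\mapsto f(i+1)$ with non-negative weights $\mu$, $1-\mu-\lambda$ and $\lambda$. Non-negativity of $1-\mu-\lambda$ follows from the normalisation $\lambda+\mu_1+\mu_2+\beta=1$. Each of these operators preserves monotonicity. The shift $f(i+1)$ preserves convexity. For the truncated shift $f((i-1)^+)$, convexity holds once one checks $f(1)-f(0)\ge 0$ at the boundary, and this holds because $f$ is non-decreasing. Adding the convex, non-decreasing $h$ from Assumption~\ref{holdingassump} closes this case.

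For the $(i,1)$ component, the terms $\lambda V_n(i+1,1)$ and $\beta V_n(i,0)$ are handled as above. The remaining term is the minimisation
\[
 T_n(i)=\min_{a\in\{\mu_1,\mu_2\}}\bigl\{c_a+aV_n((i-1)^+,1)+(1-a-\lambda-\beta)V_n(i,1)\bigr\}.
\]
Write $\Delta_n(i)=V_n(i,1)-V_n((i-1)^+,1)$ and $\kappa=c_{\mu_2}/(\mu_2-\mu_1)$. Then
\[
 T_n(i)=(1-\lambda-\beta-\mu_1)V_n(i,1)+\mu_1V_n((i-1)^+,1)-(\mu_2-\mu_1)\bigl(\Delta_n(i)-\kappa\bigr)^+ .
\]
Monotonicity of $T_n$ follows by comparing the two arguments action by action.

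Convexity of $T_n$ is the main obstacle, because a minimum of convex functions need not be convex. I would prove directly that $T_n(i+1)-T_n(i)$ is non-decreasing, splitting into cases according to which action is optimal at $i-1$, $i$ and $i+1$. By the inductive convexity, $\Delta_n$ is non-decreasing, so the optimal action switches from $\mu_1$ to $\mu_2$ at most once. The two pure cases reduce to convexity of $V_n(\cdot,1)$. In the mixed case one uses a coupling argument: for the second difference, take the action of $i+1$ at $i+1$, the action of $i-1$ at $i-1$, and at $i$ the suboptimal cross choices. Combining these with the weight $1-\mu_2-\lambda-\beta\ge 0$ yields non-negativity. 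This is the standard Lippman-type argument, and the normalisation is exactly what makes every coefficient non-negative.

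The boundary $i=0$, where the actions are inactive, must be checked separately. There $V_n(0,1)$ carries no service term, and the inequality still holds because the truncated term equals $V_n(0,1)$ itself.

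\textbf{Item 2.} This follows from the rewriting of $T_n$ above. Action $\mu_2$ is chosen iff $(\mu_2-\mu_1)\Delta_n(i)>c_{\mu_2}$, which is the negation of condition~(\ref{eq:in}). Since $\Delta_n$ is non-decreasing by Item 1, the set of states where $\mu_1$ is chosen is an initial segment $\{0,\dots,i_n\}$. Here $i_n$ is the largest $i$ satisfying~(\ref{eq:in}), with $i_n=\infty$ if no such maximum exists.

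\textbf{Item 3.} By Theorem~\ref{convergenceVI2}, $H^\ast_\infty(s)=\lim_n (V_n(s)-V_n(0,0))$. Subtracting the constant $V_n(0,0)$ does not affect monotonicity or convexity in $i$, and pointwise limits preserve both properties. Hence $H^\ast_\infty(\cdot,0)$ and $H^\ast_\infty(\cdot,1)$ are non-decreasing and convex.

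Optimal actions in the DAOE~(\ref{modelDAOE}) are the minimisers of the same expression with $H^\ast_\infty$ in place of $V_n$. Since $H^\ast_\infty(i,1)-H^\ast_\infty(i-1,1)$ is non-decreasing, the argument of Item 2 yields a threshold policy.

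It remains to show that this threshold is finite, that is, $i^\ast\in\mathbb{N}_0$. I would argue that the increments of $H^\ast_\infty(\cdot,1)$ grow without bound, since $h$ is non-constant and convex, so the increments eventually exceed $\kappa$. Concretely, I would bound $H^\ast_\infty(i,1)-H^\ast_\infty(i-1,1)$ from below by the expected extra holding cost incurred during the control period. This lower bound grows because $h(i)-h(i-1)$ is eventually positive and non-decreasing.
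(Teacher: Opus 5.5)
The paper does not prove this proposition; it is imported from Kanavetas et al. So your argument has to stand on its own. Items 1 and 2 are fine, and so are the monotonicity, convexity and threshold-form parts of Item 3. In particular, the cross-action exchange (using at $i$ both the action optimal at $i+1$ and the one optimal at $i-1$) gives convexity of the minimum term in every case, not only the mixed one. The boundary is covered by $\Delta_n(0)=0\le\Delta_n(1)$, and the limit passage via Theorem~\ref{convergenceVI2} is correct.

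The gap is in the finiteness of $i^\ast$. You bound $H^\ast_\infty(i,1)-H^\ast_\infty(i-1,1)$ below by the extra holding cost accrued \emph{during} the control period, and this cannot work in general. Under a coupling with common service choices the two queues differ by at most one customer, and the control period has mean length $1/\beta$. Hence this bound is at most $\sup_j\,(h(j)-h(j-1))/\beta$. Increments that are eventually positive and non-decreasing need not be unbounded: for $h(i)=Ki$ your bound never exceeds $K/\beta$. With the paper's Case~1 parameters and $h(i)=5i$ this gives $50$, while $\kappa=c_{\mu_2}/(\mu_2-\mu_1)=100$, even though the threshold there is finite ($i^\ast=5$).

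The unbounded growth comes from what happens after control is lost, through $H^\ast_\infty(\cdot,0)$. By the $(j,0)$ row of (\ref{modelDAOE}) and monotonicity, its increments satisfy $\mu\bigl(H^\ast_\infty(j,0)-H^\ast_\infty(j-1,0)\bigr)\ge h(j)+c_\mu-g_\infty\to\infty$.

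A clean repair runs as follows. First show $V_n(i,1)\le V_n(i,0)$ by induction: choose $a=\mu$ in the $(i,1)$ update and use $1-\mu-\lambda-\beta\ge0$. This gives $H^\ast_\infty(i,1)\le H^\ast_\infty(i,0)$. Now suppose the threshold were infinite. Then $\mu_1$ is a minimiser in every $(i,1)$, and all increments of $H^\ast_\infty(\cdot,1)$ lie in $[0,\kappa]$. The $(i,1)$ row of (\ref{modelDAOE}) then yields
\begin{align*}
\beta\bigl(H^\ast_\infty(i,0)-H^\ast_\infty(i,1)\bigr)&= g_\infty-h(i)+\mu_1\bigl(H^\ast_\infty(i,1)-H^\ast_\infty(i-1,1)\bigr)\\
&\quad-\lambda\bigl(H^\ast_\infty(i+1,1)-H^\ast_\infty(i,1)\bigr)\le g_\infty-h(i)+\mu_1\kappa .
\end{align*}
Since $h$ is convex, non-decreasing and non-constant, $h(i)\to\infty$. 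So the right-hand side is negative for large $i$, contradicting $H^\ast_\infty(i,1)\le H^\ast_\infty(i,0)$.
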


The threshold structure reflects increasing marginal congestion costs: as queue length grows, the value of faster service eventually exceeds its additional operating cost, leading to a single switching point.

Proposition~\ref{increasingconvexvalues} allows us to show the following characteristic of the model.
When the baseline service rate equals the slower rate 
$\mu_1$, restoring control becomes increasingly valuable as congestion grows. Conversely, when the baseline service rate equals $\mu_2$, additional control is most valuable at smaller queue lengths.

\begin{proposition} \label{MonotDiff}
 For $\mu=\mu_1$, we have that $0\leq H^\ast_\infty(i,0)-H^\ast_\infty(i,1)$ is non-decreasing in $i\in \mathbb{N}_0$.
For $\mu=\mu_2$, we have that $0\leq H_\infty^\ast(i,0)-H_\infty^\ast(i,1)$ is non-increasing in $i\in \mathbb{N}_0$.
\end{proposition}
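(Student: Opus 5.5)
We prove the statement at the level of value iteration and then pass to the limit using Theorem~\ref{convergenceVI2}. Put $D_n(i)=V_n(i,0)-V_n(i,1)$. Since $H^\ast_\infty(i,0)-H^\ast_\infty(i,1)=\lim_{n\to\infty}D_n(i)$, monotonicity of $D_n$ in $i$ for every $n$ carries over to the limit. The same holds for non-negativity, because $V_n((i-1)^+,\cdot)$ and $V_n(i,\cdot)$ share the normalisation $V_n(0,0)$.

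Non-negativity $D_n(i)\ge 0$ comes from a coupling argument. In VI, $V_{n+1}(i,1)\le$ the value obtained by choosing action $\mu$ in state $(i,1)$. Under that choice the recursion for $(i,1)$ coincides with the one for $(i,0)$, except that the $\beta$-mass is sent to $(i,0)$ and the self-loop absorbs $\mu_1+\mu_2-\mu$. Since $\mu\in\{\mu_1,\mu_2\}$ and $c_\mu$ equals the cost of that action, induction on $n$ gives $V_n(i,1)\le V_n(i,0)$.

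Monotonicity is shown by induction on $n$. We write
\begin{align*}
D_{n+1}(i)=&\ c_\mu-\min_a\{c_a+a\,\Delta_n^1(i)\}+\mu\,\Delta_n^0(i)+(1-\lambda-\beta)D_n(i)+\lambda D_n(i+1) \\
&\ +\beta\big(V_n(i,1)-V_n(i,0)\big)+\beta V_n(i,0)-\beta V_n(i,0),
\end{align*}
where $\Delta_n^k(i)=V_n((i-1)^+,k)-V_n(i,k)$. The terms must be regrouped carefully, using the normalisation $\lambda+\mu_1+\mu_2+\beta=1$ and the self-loop coefficients, so that $D_{n+1}(i)$ is a combination with non-negative weights of $D_n(i-1),D_n(i),D_n(i+1)$, plus a "drift" term.

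For $\mu=\mu_1$, the drift term is $c_{\mu_1}-\min_a\{c_a-a(V_n(i,1)-V_n(i-1,1))\}-\mu_1(V_n(i,0)-V_n(i-1,0))$ plus $\mu_1 D_n$-terms. It equals $\max_a\{(a-\mu_1)(V_n(i,1)-V_n(i-1,1))-c_a\}$, which is a maximum of non-decreasing functions of the marginal $V_n(i,1)-V_n(i-1,1)$ (here $a-\mu_1\ge 0$). By convexity of $V_n(\cdot,1)$ (Proposition~\ref{increasingconvexvalues}), this marginal is non-decreasing in $i$, so the drift is non-decreasing. Together with the induction hypothesis on $D_n$, this gives that $D_{n+1}$ is non-decreasing.

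For $\mu=\mu_2$, the drift is $\max_a\{(a-\mu_2)(V_n(i,1)-V_n(i-1,1))+c_{\mu_2}-c_a\}$ with $a-\mu_2\le 0$, which is non-increasing in $i$ by the same convexity. This gives non-increasing $D_{n+1}$.

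The main obstacle is the regrouping: the terms $\mu D_n(i-1)$, $(1-\lambda-\beta-\mu)D_n(i)$ and $\lambda D_n(i+1)$ must appear with non-negative coefficients, and the leftover term $\beta(V_n(i,0)-V_n(i,0))$ from the breakdown transition must be handled. The breakdown transition contributes $\beta V_n(i,0)$ to $V_{n+1}(i,1)$ but has no counterpart in $V_{n+1}(i,0)$, so it adds $-\beta V_n(i,0)+\beta V_n(i,0)$-type corrections that reduce to the weight $(1-\lambda-\mu-\beta)$ on $D_n(i)$, but only after writing the $(i,0)$ self-loop as $(1-\lambda-\mu)$. I would first verify this coefficient bookkeeping, including the boundary $i=0,1$ where $(i-1)^+$ truncation changes the coefficients. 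If the direct regrouping fails, I would instead argue through a sample-path coupling of the controlled and uncontrolled chains started at $i$ and $i+1$.
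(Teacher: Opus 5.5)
Your proof is correct and has the same skeleton as the paper's: induction on the VI iterates $D_n(i)=V_n(i,0)-V_n(i,1)$, a one-step recursion writing $D_{n+1}$ through neighbouring values of $D_n$ with non-negative weights, monotonicity and convexity of $V_n(\cdot,1)$ from Proposition~\ref{increasingconvexvalues}, and passage to the limit via Theorem~\ref{convergenceVI2}.

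What differs is the packaging. The paper splits the induction step into the cases $i<i_{n+1}$, $i>i_{n+1}$ and $i=i_{n+1}$ according to the VI threshold, and invokes (\ref{eq:in}) at the switching state. Your max-of-affine form of the action-dependent term handles all these cases, and both choices of $\mu$, at once. It uses only monotonicity and convexity of $V_n(\cdot,1)$, not the threshold structure.

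Your separate non-negativity step is a genuine addition. The paper's induction step only writes out the monotonicity comparisons and leaves $0\le D_{n+1}$ implicit. For $\mu=\mu_2$, non-negativity does not follow from monotonicity.

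One correction concerns the bookkeeping you flagged as the main obstacle. Your displayed identity double-counts the breakdown transition: its extra terms amount to $-\beta D_n(i)$. In fact the $\beta V_n(i,0)$ in $V_{n+1}(i,1)$ cancels exactly against the $\beta$-part of the self-loop $(1-\mu-\lambda)V_n(i,0)$ in $V_{n+1}(i,0)$. With $\delta_n(i)=V_n(i,1)-V_n((i-1)^+,1)$ one gets, for every $i\in\mathbb{N}_0$,
\[
D_{n+1}(i)=\max_{a\in\{\mu_1,\mu_2\}}\bigl\{(a-\mu)\delta_n(i)+c_\mu-c_a\bigr\}+\mu D_n((i-1)^+)+(1-\lambda-\beta-\mu)D_n(i)+\lambda D_n(i+1).
\]
This holds at $i=0$ as well, where $\delta_n(0)=0$ and the max equals $c_\mu$. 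Consequently:
\begin{itemize}
\item there is no leftover breakdown term;
\item the boundary step from $0$ to $1$ only needs $\delta_n(1)\ge 0=\delta_n(0)$;
\item the coupling fallback is unnecessary.
\end{itemize}
Since the $a=\mu$ term of the max is $0$, the same identity also gives $D_{n+1}\ge 0$ directly from $D_n\ge 0$.
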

\begin{proof} 

  We will use induction on the value vectors of VI. The statement that we first need to prove is that,  for any $n\in \mathbb{N}_0$, we have that $0\leq V_{n}(i,0)-V_{n}(i,1)$ is non-decreasing in $i\in \mathbb{N}_0$ when $\mu=\mu_1$ and non-increasing in $i\in \mathbb{N}_0$ when $\mu=\mu_2$. The base case $n=0$ is true, as we take $V_{0}=0$.

Assume that the above statement is true upto and including iteration $n$, for both $\mu=\mu_1$ and $\mu=\mu_2$.
Let first $\mu=\mu_1$.  For $0\leq  i< i_{n+1}$, 
\begin{eqnarray*}
\lefteqn{V_{n+1}(i,0)-V_{n+1}(i,1)}\\
&=& h(i)+\lambda V_n(i+1,0)+(1-\lambda-\mu_1)V_n(i,0)+\mu_1 V_n((i-1)^+,0)\\
&&\quad -h(i)-\lambda V_n(i+1,1)-(1-\lambda-\mu_1-\beta)V_n(i,1)-\beta V_n(i,0)-\mu_1 V_n((i-1)^+,1)\\
&=&\lambda(V_n(i+1,0)-V_n(i+1,1))+(1-\lambda -\mu_1-\beta)(V_n(i,0)-V_n(i,1))\\
&&\quad + \mu_1V_n((i-1)^+,1)-V_n((i-1)^+,0)\\
&\leq&\lambda(V_n(i+2,0)-V_n(i+2,1))+(1-\lambda -\mu_1-\beta)(V_n(i+1,0)-V_n(i+1,1))\\
&&\quad +\mu_1(V_n(i),1)-V_n(i,0))=V_{n+1}(i+1,0)-V_{n+1}(i+1,1).
\end{eqnarray*}
The last inequality follows from the induction hypothesis.
The last equality follows by replacing $i$ in the first two equalities by $i+1$, 
and the fact that the minisiming action in state $(i+1,1)$   is equal to the one in $(i,1)$.

Next, consider the case $i> i_{n+1}$. This yields
\begin{eqnarray*}
\lefteqn{V_{n+1}(i,0)-V_{n+1}(i,1)}\\
&=& h(i)+\lambda V_n(i+1,0)+(1-\lambda-\mu_1)V_n(i,0)+\mu_1 V_n((i-1)^+,0)\\
&&\quad -c_{\mu_2}-h(i)-\lambda V_n(i+1,1)-(1-\lambda-\mu_2-\beta)V_n(i,1)-\beta V_n(i,0)-\mu_2 V_n((i-1)^+,1)\\
&=&-c_{\mu_2}+\lambda(V_n(i+1,0)-V_n(i+1,1))+(1-\lambda -\mu_1-\beta)(V_n(i,0)-V_n(i,1))\\
&&\quad + \mu_1(V_n((i-1)^+,0)-V_n((i-1)^+,1))+(\mu_2-\mu_1)( V_n(i,1)- V_n((i-1)^+,1))\\
&\leq&-c_{\mu_2}+\lambda(V_n(i+2,0)-V_n(i+2,1))+(1-\lambda -\mu_1-\beta)(V_n(i+1,0)-V_n(i+1,1))\\
&&\quad +\mu_1(V_n((i,0)-V_n(i,1))+(\mu_2-\mu_1)( V_n(i+1,1)- V_n(i,1))\\
&=&V_{n+1}(i+1,0)-V_{n+1}(i+1,1).
\end{eqnarray*}
The last inequality follows from the induction hypothesis,  convexity of $V_n$ in $i$, and $\mu_2\geq\mu_1$.

Finally, we consider the case $i=i_{n+1}$. Then,
\begin{eqnarray*}
\lefteqn{V_{n+1}(i,0)-V_{n+1}(i,1)}\\
&=& h(i)+\lambda V_n(i+1,0)+(1-\lambda-\mu_1)V_n(i,0)+\mu_1 V_n((i-1)^+,0)\\
&&\quad -h(i)-\lambda V_n(i+1,1)-(1-\lambda-\mu_1-\beta)V_n(i,1)-\beta V_n(i,0)-\mu_2 V_n((i-1)^+,1)\\
&=&\lambda(V_n(i+1,0)-V_n(i+1,1))+(1-\lambda -\mu_1-\beta)(V_n(i,0)-V_n(i,1))\\
&&\quad + \mu_1(V_n((i-1)^+,0)-V_n((i-1)^+,1))\\
&\leq& \lambda(V_n(i+2,0)-V_n(i+2,1))+(1-\lambda -\mu_1-\beta)(V_n(i+1,0)-V_n(i+1,1))\\
&&\quad + \mu_1(V_n(i,0)-V_n(i,1))\\
&=&V_{n+1}(i+1,0)- V_{n+1}(i+1,1) +
c_{\mu_2}+ (\mu_1-\mu_2) (V_n(i+1,1)-V_n(i,1))\\
&\leq &V_{n+1}(i+1,0)- V_{n+1}(i+1,1),
\end{eqnarray*}
where in the one but last inequality we have used the induction hypothesis. In the last equality, we have used the expressions for  $V_{n+1}(i+1,0)$ and $V_{n+1}(i+1,1)$.
Finally, in the last inequality, we have used that $c_{\mu_2}\leq (\mu_2-\mu_1) (V_n(i,1)-V_n((i-1)^+,1))$, cf. Proposition~\ref{increasingconvexvalues}, and  Eqn.(~\ref{eq:in}).

Next, let $\mu=\mu_2$. For $i> i_{n+1}$, in  $(i,0)$, $(i+1,0)$,  $(i,1)$ and $(i+1,1)$  service rate $\mu_2$ is used. Therefore, the
induction step is analogously validated as in  the case $\mu=\mu_1$ and $i<i_{n+1}$. Similarly, for $i<i_{n+1}$, the service rates in $(i,1)$ and $(i+1,1)$ are equal  to $\mu_1$, and so the induction step can be validated similarly to the case $i>i_{n+1}$ for $\mu=\mu_1$.

We check the case that $i=i_{n+1}$, for which the service rate in $(i,1)$ is $\mu_1$ and $\mu_2$ in $(i+1,1)$.
\begin{eqnarray*}
\lefteqn{V_{n+1}(i,0)-V_{n+1}(i,1)}\\
&=& c_{\mu_2}+h(i)+\lambda V_n(i+1,0)+(1-\lambda-\mu_2)V_n(i,0)+\mu_2 V_n((i-1)^+,0)\\
&&\quad -h(i)-\lambda V_n(i+1,1)-(1-\lambda-\mu_1-\beta)V_n(i,1)-\beta V_n(i,0)-\mu_1 V_n((i-1)^+,1)\\
&=&c_{\mu_2}+\lambda(V_n(i+1,0)-V_n(i+1,1))+(1-\lambda -\mu_2-\beta)(V_n(i,0)-V_n(i,1))\\
&&\quad +\mu_2(V_n((i-1)^+,0)-V_n((i-1)^+,1) +( \mu_1-\mu_2)(V_n(i,1)-V_n((i-1)^+,1)))\\
&\geq&c_{\mu_2}+ \lambda(V_n(i+2,0)-V_n(i+2,1))+(1-\lambda -\mu_2-\beta)(V_n(i+1,0)-V_n(i+1,1))\\
&&\quad +\mu_2(V_n(i,0)-V_n(i,1))+ (\mu_1-\mu_2)(V_n(i,1))-V_n((i-1)^+,1))\\
&=&V_{n+1}(i+1,0)- V_{n+1}(i+1,1) +
c_{\mu_2}+ (\mu_1-\mu_2) (V_n(i+1,1)-V_n(i,1))\\
&\geq &V_{n+1}(i+1,0)- V_{n+1}(i+1,1),
\end{eqnarray*}
where in the  one but last inequality we have again used the induction hypothesis.
In the last equality, we have used the expressions for  $V_{n+1}(i+1,0)$ and $V_{n+1}(i+1,1)$.
Finally,  (minus) Eqn.(~\ref{eq:in}) is used in the last inequality.

     Finally, by virtue of  Theorem~\ref{convergenceVI2},  
     $$H_\infty^\ast(i,0)-H_\infty^\ast(i,1)=\lim_{n\rightarrow \infty}V_{n}(i,0)-V_{n}(i,1),
     $$ and thus these differences are non-decreasing in $i$ for $\mu=\mu_1$, and non-increasing for $\mu=\mu_2$.
\end{proof}


\subsection{Determining threshold $\ell$ for policies $\phi_{A,B,\ell}$}\label{sect:ell}

The choice of $\ell$ is based on the critical cost, which is defined as follows.

\begin{definition}\label{CritCost}
    The value $c_r^\ast=\sum_{i=0}^\infty\pi_\mu(i) (H^\ast_\infty(i,0)-H^\ast_\infty(i,1))$ is called the \textit{critical cost} and is the cost saved by a single period of control starting in stationary distribution $\pi_\mu$.
\end{definition}

How to determine/approximate $c^\ast_r$ and the fact that it exists is treated in \cite{Koopmans}.


We choose the threshold $\ell$ such that the cost save on one side of the threshold is at least $c^\ast_r$.
For the general model this has the following reasoning. 
On the corresponding side of $\ell$,  $\phi_{A,B,\ell}$ will prescribe a direct sensor repair. In this region, we want to guarantee a future cost reduction of at least $c^\ast_r$, in the region where this is not guaranteed, we would like to wait untill the distribution of the number of customers is close to $\pi_\mu$, implying that the next cost reduction is close to $c^\ast_r$.
Thus, when $c_r<c^\ast_r$ this methodology will result in an average expected cost reduction.

To determine $\ell$, we use Proposition~\ref{MonotDiff} to find relative value differences that are larger than the critical cost.

\begin{proposition}\label{ImprovSet}
\hspace{0.001cm}
\begin{enumerate}
    \item For $\mu=\mu_1$, there exists an $\ell\in \mathbb{N}_0$ such that $ h(\ell)>g_\infty+\mu_1(c^\ast_r+c_{\mu_2})/(\mu_2-\mu_1) $ and\\ $(\mu_2-\mu_1)(h(\ell)-h(\ell-1)) >c_{\mu_2} $,
    implying that $H_\infty^\ast(i,0)-H_\infty^\ast(i,1)\geq c^\ast_r$, for $i\geq \ell$.
    \item  For $\mu=\mu_2$, and the choice $\ell=0$, it holds that
    $H_\infty^\ast(i,0)-H_\infty^\ast(i,1)\geq c^\ast_r$, for $i\leq \ell$.
\end{enumerate}

\end{proposition}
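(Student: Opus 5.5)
The plan is to handle the two parts separately. Part~2 is a direct consequence of Proposition~\ref{MonotDiff}. Part~1 needs an analysis of the DAOE~(\ref{modelDAOE}) at large queue lengths. Throughout, write $D(i)=H_\infty^\ast(i,0)-H_\infty^\ast(i,1)$.

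\emph{Part 2 ($\mu=\mu_2$, $\ell=0$).} By Proposition~\ref{MonotDiff}, $D(i)\geq 0$ and $D$ is non-increasing in $i$, so $D(0)\geq D(i)$ for every $i$. Since $\pi_\mu$ is a probability distribution,
\[
c^\ast_r=\sum_{i\geq 0}\pi_\mu(i)D(i)\leq \sum_{i\geq 0}\pi_\mu(i)D(0)=D(0).
\]
This gives the claim for $i\leq \ell=0$. The series defining $c^\ast_r$ converges (cf.~\cite{Koopmans}), so the comparison is legitimate.

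\emph{Part 1, existence of $\ell$.} By Assumption~\ref{holdingassump}, $h$ is convex, non-decreasing and non-constant, so its increments $h(i)-h(i-1)$ are non-decreasing and eventually positive. Hence $h(i)\to\infty$, and the first inequality holds for all large $\ell$. For the increment inequality, $(\mu_2-\mu_1)(h(\ell)-h(\ell-1))>c_{\mu_2}$, I would also use that the increments are non-decreasing. However, growth of $h$ alone does not force the increments past $c_{\mu_2}/(\mu_2-\mu_1)$. If this turns out to be needed rather than automatic, I would flag that it holds whenever the increments of $h$ are unbounded, or read it as part of the hypothesis. Once both inequalities hold at $\ell$, they hold for every $i\geq\ell$: $h$ is non-decreasing and its increments are non-decreasing.

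\emph{Part 1, the bound $D(i)\geq c^\ast_r$ for $i\geq \ell$.} First I show that $i\geq\ell$ lies above the optimal threshold $i^\ast$, so that $\mu_2$ is used in $(i,1)$.
\begin{itemize}
\item From the $(i,0)$ equation with $c_{\mu_1}=0$,
\[
\mu_1\bigl(H_\infty^\ast(i,0)-H_\infty^\ast(i-1,0)\bigr)=h(i)-g_\infty+\lambda\bigl(H_\infty^\ast(i+1,0)-H_\infty^\ast(i,0)\bigr)\geq h(i)-g_\infty,
\]
using monotonicity from Proposition~\ref{increasingconvexvalues}.
\item An analogous lower bound on $H_\infty^\ast(i,1)-H_\infty^\ast(i-1,1)$ follows from the $(i,1)$ equation together with $D\geq 0$. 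Combined with the increment condition on $h$ and criterion~(\ref{eq:in}) in the limit, this should give $i>i^\ast$.
\end{itemize}
With $\mu_2$ active in $(i,1)$, I subtract the $(i,1)$ equation from the $(i,0)$ equation. This expresses $\beta D(i)$, together with $\lambda$- and $(1-\lambda-\mu_1-\beta)$-weighted terms in $D(i+1)$ and $D(i)$ and a $\mu_1$ term in $D(i-1)$, as
\[
-c_{\mu_2}+(\mu_2-\mu_1)\bigl(H_\infty^\ast(i,1)-H_\infty^\ast(i-1,1)\bigr).
\]
I then bound the last increment from below using the $(i,0)$-equation estimate and the relation
\[
H_\infty^\ast(i,0)-H_\infty^\ast(i-1,0)-\bigl(H_\infty^\ast(i,1)-H_\infty^\ast(i-1,1)\bigr)=D(i)-D(i-1).
\]
Monotonicity of $D$ (Proposition~\ref{MonotDiff}) handles the neighbouring $D$ terms. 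The first inequality on $h(\ell)$ is calibrated so that the resulting lower bound is $\geq c^\ast_r$; the factor $\mu_1/(\mu_2-\mu_1)$ in it suggests exactly this rearrangement. Monotonicity of $D$ then extends the bound from $i=\ell$ to all $i\geq\ell$.

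\emph{Main obstacle.} The hardest step is this last algebraic manipulation. The difference of the two DAOE equations involves $D(i+1)$, $D(i)$ and $D(i-1)$ at once, and the increments of $H_\infty^\ast(\cdot,0)$ and $H_\infty^\ast(\cdot,1)$ must be traded against each other without losing the constant. My first attempt is to replace $D(i\pm1)$ by $D(i)$ in the direction allowed by monotonicity. I then isolate $D(i)$ and insert the bound $H_\infty^\ast(i,0)-H_\infty^\ast(i-1,0)\geq (h(i)-g_\infty)/\mu_1$, checking that the threshold inequality on $h(\ell)$ yields $D(i)\geq c^\ast_r$.
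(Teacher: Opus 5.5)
Your core computation is the paper's. You subtract the DAOE at $(\ell,1)$, with $\mu_2$ active, from the DAOE at $(\ell,0)$. You then trade $H^\ast_\infty(\ell,1)-H^\ast_\infty(\ell-1,1)$ for $H^\ast_\infty(\ell,0)-H^\ast_\infty(\ell-1,0)$, discard the $D$-terms, and insert $\mu_1\bigl(H^\ast_\infty(\ell,0)-H^\ast_\infty(\ell-1,0)\bigr)\geq h(\ell)-g_\infty$. One caution there: monotonicity only lets you replace $D(\ell+1)$ by $D(\ell)$. The $D(\ell-1)$ term must be dropped using $D(\ell-1)\geq 0$, as the paper does, because $D(\ell-1)\leq D(\ell)$ points the wrong way. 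Part~2 is exactly the paper's argument.

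Your flag on existence is correct. Assumption~\ref{holdingassump} does not force $(\mu_2-\mu_1)(h(\ell)-h(\ell-1))>c_{\mu_2}$: any linear $h$ with slope at most $c_{\mu_2}/(\mu_2-\mu_1)$ violates it for every $\ell$. The paper's own Case~1 with $h(i)=5i$ is an example, since $0.1\cdot 5<10$. So the paper's appeal to the assumption there is unjustified.

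The genuine gap is the step showing that $\mu_2$ is active in $(\ell,1)$. As written it does not close. The lower bound the $(i,1)$ equation yields is $a\bigl(H^\ast_\infty(i,1)-H^\ast_\infty(i-1,1)\bigr)\geq c_a+h(i)-g_\infty$. This involves $h(i)-g_\infty$, not $h(i)-h(i-1)$, so it cannot be "combined with the increment condition". The paper proves this step differently. It cites Proposition~3.2 of \cite{Koopmans}, namely $V_n(i,1)-V_n(i-1,1)\geq h(\ell)-h(\ell-1)$ for $i\geq\ell$, and combines it with (\ref{eq:in}) to get $i^\ast\leq\ell-1$. That is the only place the increment condition enters.

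Your DAOE route does work if you run it as a contradiction with the \emph{first} condition. Suppose $\mu_1$ attained the minimum at $(\ell,1)$.
\begin{itemize}
\item The $(\ell,1)$ equation, with $D(\ell)\geq 0$ and monotonicity of $H^\ast_\infty(\cdot,1)$, gives $\mu_1\bigl(H^\ast_\infty(\ell,1)-H^\ast_\infty(\ell-1,1)\bigr)\geq h(\ell)-g_\infty$.
\item Minimality of $\mu_1$ gives $(\mu_2-\mu_1)\bigl(H^\ast_\infty(\ell,1)-H^\ast_\infty(\ell-1,1)\bigr)\leq c_{\mu_2}$.
\item Together these give $h(\ell)\leq g_\infty+\mu_1 c_{\mu_2}/(\mu_2-\mu_1)$, contradicting the first condition because $c^\ast_r\geq 0$.
\end{itemize}
Note that $\ell\geq 1$ automatically, since $h(\ell)>g_\infty\geq 0=h(0)$.

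This makes the increment condition superfluous and resolves the existence issue you raised. Since $h\to\infty$, an $\ell$ satisfying the first condition always exists. Then $D(i)\geq c^\ast_r$ for all $i\geq\ell$, either directly, because the first condition is inherited by larger $i$, or via Proposition~\ref{MonotDiff}.

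If you only want bare existence of such an $\ell$, the paper's opening sentence uses the mirror image of your Part~2 argument. $D$ is non-decreasing and $c^\ast_r$ is a $\pi_\mu$-average of $D$ with $\pi_\mu(i)>0$ for all $i$, so $D(\ell)\geq c^\ast_r$ for some $\ell$.
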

\begin{proof}
 The existence of such a value $\ell$ follows directly by Proposition~\ref{MonotDiff} and the fact that the critical cost is a weighted average of $H_\infty^\ast(i,0)-H_\infty^\ast(i,1)$ w.r.t. distribution $\pi_\mu$. The sufficient choice of $\ell=0$ when $\mu=\mu_2$ follows directly.

    We now consider the case $\mu=\mu_1$. Let any $\ell$ be given such that the conditions hold. We note that such an $\ell$ exists due to Assumption~\ref{holdingassump}.
    
    By Proposition~3.2 of \cite{Koopmans} 
    $(\mu_2-\mu_1) (V_n(i,1)-V_n(i-1,1))\geq (\mu_2-\mu_1)( h(\ell)-h(\ell-1))>c_{\mu_2}$, for $i\geq \ell$, and therefore $i^\ast\leq i_{1} \leq \ell-1$ .
    We will now show that $H_\infty^\ast(\ell,0)-H_\infty^\ast(\ell,1)\geq c^\ast_r. $ 
We refer to the DAOE given in Equation~(\ref{modelDAOE}).
We also use the fact that $H_\infty^\ast(i,0)-H_\infty^\ast(i,1)\geq 0$ for any $i\in \mathbb{N}_0$.
We find
\begin{eqnarray}
\label{eq:kprop}
H_\infty^\ast(\ell,0)-H_\infty^\ast(\ell,1) &=& -c_{\mu_2} + \mu_1( H_\infty^\ast(\ell-1,0)-H_\infty^\ast(\ell,1))  
     +\mu_2 ( H_\infty^\ast(\ell,0)-H_\infty^\ast(\ell-1,1)) \nonumber\\
&&\quad+\beta\cdot 0 +\lambda( H_\infty^\ast(\ell+1,0)-H_\infty^\ast(\ell+1,1))\nonumber\\
&\geq& -c_{\mu_2} + \mu_1( H_\infty^\ast(\ell-1,0)-H_\infty^\ast(\ell,1))  +\mu_2 ( H_\infty^\ast(\ell,0)-H_\infty^\ast(\ell-1,1))\nonumber\\
&= &-c_{\mu_2}+\mu_1 ( H_\infty^\ast(\ell,0)-H_\infty^\ast(\ell,1))+\mu_2(H_\infty^\ast(\ell-1,0)-H_\infty^\ast(\ell-1,1))\nonumber \\
&&\quad  +(\mu_2-\mu_1)(H_\infty^\ast(\ell,0)-H_\infty^\ast(\ell-1,0))\nonumber\\
 & \geq& -c_{\mu_2}+(\mu_2-\mu_1)(H_\infty^\ast(\ell,0)-H_\infty^\ast(\ell-1,0)).
\end{eqnarray}

Then, by the DAOE of Equation~(\ref{modelDAOE}) we can derive that \begin{equation}\label{Hiter} g_\infty-h(i)+\mu_1(H_\infty(i,0)-H_\infty(i-1,0))=\lambda(H_\infty(i+1,0)-H_\infty(i,0)) . \end{equation} 
Equation~(\ref{Hiter}) and the non-decreasingness of $H_\infty(i,0)$ in $i$ given by Proposition~\ref{increasingconvexvalues} result in 
\begin{equation*}
    \begin{split}
        & g_\infty-h(i)+\mu_1(H_\infty(i,0)-H_\infty(i-1,0))\geq 0\\
        &\quad \implies  \mu_1(H_\infty(i,0)-H_\infty(i-1,0))\geq h(i)-g_\infty.
    \end{split}
\end{equation*}
With our choice of $\ell$ such that $ h(\ell)>g_\infty+\mu_1(c^\ast_r+c_{\mu_2})/(\mu_2-\mu_1) $ this implies that \\
$ H_\infty(\ell,0)-H_\infty(\ell-1,0)\geq (c^\ast_r+c_{\mu_2})/(\mu_2-\mu_1)$. Together with Equation~(\ref{eq:kprop}), we can conclude that $ H_\infty(\ell,0)-H_\infty(\ell,1)\geq c^\ast_r$.

\end{proof}

\noindent In the case $\mu=\mu_1$, we will consider improving policies $\phi_{A,0,\ell}$. As
$i^\ast<\ell$, we note that there is no direct sensor repair in states where service rate $\mu_1$ is already being prescribed. 
In the case $\mu=\mu_2$, trivially $i^\ast>\ell=0$ and a similar statement holds.

\noindent 
In the case that $\mu=\mu_2$ we note that higher values of $\ell$ than 0 can be preferable and Lemma 4.1 of \cite{Koopmans} can be used to verify whether $H_\infty^\ast(\ell,0)-H_\infty^\ast(\ell,1)\geq c_r^\ast$. These values of $\ell$ are exactly the sufficient value of $\ell$ that we take when constructing average expected cost reducing policies $\phi_{A,B,\ell}$.

\section{Repetitive options for control periods}\label{sect:improv}

In this section, we will determine sufficient choices of $A,B$ 
for average expected cost reducing policies $\phi_{A,B,\ell}$.
The used methodology is similar for the two cases $\mu=\mu_1$ and $\mu=\mu_2$.
First, we derive 
a sufficiently high
 value of $A$ for the case $\mu=\mu_1$ such that a single (potentially delayed) sensor repair is beneficial for every queue length.  A high enough value of $A$ ensures a small enough total variation distance uniformly in initial queue lengths $0\leq i\leq \ell$. Hence, these resulting single cost saves are all close to $c^\ast_r-c_r$.

 We note that $\phi_{A,0,\ell}$ 
induces a renewal reward process.
Hence, a Tauberian theorem allows to study the average expected costs through a vanishing discount.
Subsequently, we use a telescoping sequence over the number of sensor repairs under this vanishing discount to determine a lower bound of the resulting average expected cost reduction.

In the case $\mu=\mu_2$, it is not possible to bound the total variation distance to $\pi_\mu$ at repair time $B$ uniformly over all initial queue lengths $i>\ell$. 
To tackle this problem, we 
accept non-beneficial sensor repairs at cost $c_r$ given that these are compensated in the long run by the beneficial sensor repairs.  For this matter, $B$ is chosen to be large enough to guarantee this.

Consider the continuous time model where we have the option to schedule successive repair times. Here, uniformisation is not possible and so we define the total discounted cost starting in state $s$,  denoted by $V^\phi_\alpha(s)$, as
\begin{equation}
    V_\alpha^\phi(s) = \mathbb{E}^\phi_s\Bigg[\int_{t=0}^\infty e^{-\alpha t} c^\phi(X_t)  dt+\sum_{k=1}^{\infty} e^{-\alpha R(k)}\cdot c_r \Bigg], \label{eq:Valpha}
\end{equation}
where $R(k)$ is the time of the $k$-th repair. Note that, under transformation of the discount factor, Equations~(\ref{eq:Valphadisc}) and (\ref{eq:Valpha}) for no-repair policies agree and as one discount vanishes, so does the other.



In order to quantify the cost reductions of size $c^\ast_r-c_r$, we introduce the critical gap and assume that it is positive.  

\begin{definition}\label{DriftDef}
    The \textit{critical gap} $\Delta_c$ is defined to be $\Delta_c:=c^\ast_r-c_r$.
\end{definition}


\begin{assumption}\label{criticalassump} $c_r<c^\ast_r$ such that the critical gap is positive, i.e., $\Delta_c>0$.
\end{assumption} 
\noindent
As a result of Theorem~6.2 of \cite{Koopmans}, we can  derive a positive lower bound on the critical gap.

 
We will show that Assumption~\ref{criticalassump} is sufficient for the derivation of average expected cost reducing policies $\phi_{A,B,\ell}$. The constructed average expected cost reducing policies and lower bounds of their average expected cost reductions are provided in Theorems~\ref{mufismu1avg} and \ref{mufismu2avg} for the cases $\mu=\mu_1$ and $\mu=\mu_2$ respectively.




To measure the  total variation distance, we consider the  probabilities $P_{ij}(t)$ of being in state $j$ at time $t$, when starting in $i$,  using service rate $\mu$. The values $P_{ij}(t)$ converge to $\pi_\mu(j)$, as $t\rightarrow \infty$. 
Lund et al.~\cite{LundTweedie} derived a bound on the total variation distances $\sum_{j=0}^\infty|P_{ij}(t)-\pi_\mu(j)|$ for stochastically ordered Markov processes. We use 
 a version of the specific bounds for the M/M/1 queue given by Robert~\cite{Robert2003}. The bound quantifies how quickly the uncontrolled queue forgets its initial state. This determines how long one should wait before restoring control.

\begin{proposition} \label{TVexpconverg}
 For any starting state $i\in \mathbb{N}_0$ and time $t\geq 0$, the distance to the equilibrium satisfies 
 $$\sum_{j=0}^\infty | P_{ij}(t)-\pi_\mu(j) | \leq 2(1+
 \rho_\mu^{-i/2})e^{-(\sqrt{\mu}-\sqrt{\lambda})^2t} . $$
 \end{proposition}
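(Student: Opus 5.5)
We prove the bound by coupling, together with the Lyapunov/drift argument used by Lund et al.~\cite{LundTweedie} and Robert~\cite{Robert2003}, specialised to the M/M/1 queue with rates $\lambda$ and $\mu$. Here $\rho_\mu=\lambda/\mu<1$.

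\textbf{Step 1 (coupling).} Let $Q^i(t)$ denote the queue started in $i$ and $Q^\pi(t)$ the queue started from $\pi_\mu$, independent of it. Run both with the same arrival and (potential) service Poisson clocks. This monotone coupling preserves order, so the two processes have coalesced once both have hit $0$. Let $\tau$ be the first time the larger copy hits $0$. The coupling inequality gives
\[\sum_j|P_{ij}(t)-\pi_\mu(j)|\leq 2\,\mathbb{P}(Q^i(t)\neq Q^\pi(t))\leq 2\,\mathbb{P}(\tau>t).\]
Moreover, $\tau\le \max(T_i,T_{Q^\pi(0)})$, where $T_k$ is the hitting time of $0$ from $k$. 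Hence
\[\mathbb{P}(\tau>t)\le \mathbb{P}(T_i>t)+\mathbb{E}_\pi\big[\mathbb{P}(T_{Q^\pi(0)}>t)\big].\]

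\textbf{Step 2 (exponential tail of hitting times).} We use the function $f(k)=\rho_\mu^{-k/2}$. Away from $0$, the generator gives
\[\mathcal{L}f(k)=\big(\lambda\rho_\mu^{-1/2}+\mu\rho_\mu^{1/2}-\lambda-\mu\big)f(k)=-(\sqrt\mu-\sqrt\lambda)^2 f(k),\]
since $\lambda\rho_\mu^{-1/2}=\mu\rho_\mu^{1/2}=\sqrt{\lambda\mu}$. Write $\gamma=(\sqrt\mu-\sqrt\lambda)^2$. Then $e^{\gamma (t\wedge T_k)}f(Q(t\wedge T_k))$ is a martingale. By Fatou's lemma and $f(0)=1$, we get $\mathbb{E}_k[e^{\gamma T_k}]\le f(k)=\rho_\mu^{-k/2}$. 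Markov's inequality then gives
\[\mathbb{P}(T_k>t)\le \rho_\mu^{-k/2}e^{-\gamma t}.\]

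\textbf{Step 3 (assemble).} We need $\mathbb{E}_\pi[\rho_\mu^{-Q/2}]=\sum_j(1-\rho_\mu)\rho_\mu^{j/2}=(1-\rho_\mu)/(1-\sqrt{\rho_\mu})=1+\sqrt{\rho_\mu}\le 2$. This crude route therefore yields $2(\rho_\mu^{-i/2}+1+\sqrt{\rho_\mu})e^{-\gamma t}$, which is slightly worse than the claimed constant $2(1+\rho_\mu^{-i/2})$.

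\textbf{Main obstacle.} The difficulty is obtaining the sharp constant. The fix is to bound $\tau$ by $\max$ rather than by a sum. Because the coupling is monotone, $\{\tau>t\}$ is exactly the event that the larger of the two copies has not yet hit $0$. Condition on which copy is larger. If $Q^\pi(0)\le i$, then $\tau=T_i$ and the contribution is at most $\rho_\mu^{-i/2}e^{-\gamma t}$. If $Q^\pi(0)>i$, then $\tau=T_{Q^\pi(0)}$, and the contribution is $\mathbb{E}_\pi[\rho_\mu^{-Q/2}\mathds 1_{\{Q>i\}}]e^{-\gamma t}\le (1+\sqrt{\rho_\mu})\rho_\mu^{(i+1)/2}e^{-\gamma t}$, which is at most $e^{-\gamma t}$. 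Together these give $\mathbb{P}(\tau>t)\le(1+\rho_\mu^{-i/2})e^{-\gamma t}$, and multiplying by the factor $2$ from Step 1 gives the stated bound. Alternatively, one may cite Robert's~\cite{Robert2003} explicit M/M/1 bound directly, since it has exactly this form.
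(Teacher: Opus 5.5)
Your overall route is fine, and it is essentially the Lund--Meyn--Tweedie/Robert coupling argument. The paper gives no proof of its own and simply cites those sources. The monotone coupling, the coupling inequality, the eigenfunction identity $\mathcal{L}f=-(\sqrt{\mu}-\sqrt{\lambda})^2f$ for $f(k)=\rho_\mu^{-k/2}$, the bound $\mathbb{E}_k[e^{\gamma T_k}]\le\rho_\mu^{-k/2}$, and the value $\mathbb{E}_\pi[\rho_\mu^{-Q/2}\mathbf{1}_{\{Q>i\}}]=(1+\sqrt{\rho_\mu})\rho_\mu^{(i+1)/2}$ are all correct.

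\textbf{The gap.} The last step claims this quantity is at most $1$, and that is false in general. For $i=0$ it equals $\sqrt{\rho_\mu}+\rho_\mu$, which exceeds $1$ as soon as $\rho_\mu>(3-\sqrt{5})/2\approx 0.38$; for example $\rho_\mu=0.9$ gives about $1.85$. Cases~2 and~3 of the paper lie in this regime. You also bounded the first contribution by the full $\rho_\mu^{-i/2}$. So your argument as written only yields $2\bigl(\rho_\mu^{-i/2}+(1+\sqrt{\rho_\mu})\rho_\mu^{(i+1)/2}\bigr)e^{-\gamma t}$. For $i=0$ in heavy traffic this is close to $6e^{-\gamma t}$, not the claimed $4e^{-\gamma t}$, so the stated constant is not established.

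\textbf{The repair.} Do not discard the probability of the first case. $Q^\pi(0)$ is drawn independently of the driving Poisson clocks, and the hitting time of $0$ by the copy started in $i$ depends only on those clocks. Hence the first contribution is $\mathbb{P}(Q^\pi(0)\le i)\,\mathbb{P}(T_i>t)\le(1-\rho_\mu^{i+1})\rho_\mu^{-i/2}e^{-\gamma t}$. When you add the second contribution, the $\rho_\mu^{i/2+1}$ terms cancel:
\[
\mathbb{P}(\tau>t)\le\Bigl[(1-\rho_\mu^{i+1})\rho_\mu^{-i/2}+(1+\sqrt{\rho_\mu})\rho_\mu^{(i+1)/2}\Bigr]e^{-\gamma t}=\bigl(\rho_\mu^{-i/2}+\rho_\mu^{(i+1)/2}\bigr)e^{-\gamma t}.
\]
Since $\rho_\mu^{(i+1)/2}\le 1$, this gives the stated bound, in fact a slightly sharper one.

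Equivalently, conditionally on $Q^\pi(0)$ the larger copy starts at $\max(i,Q^\pi(0))$. So $\mathbb{P}(\tau>t)\le\mathbb{E}\bigl[\rho_\mu^{-\max(i,Q^\pi(0))/2}\bigr]e^{-\gamma t}$, and that expectation equals $\rho_\mu^{-i/2}+\rho_\mu^{(i+1)/2}$ exactly.
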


\subsection{Fixed rate $\mu=\mu_1$}\label{sect:mu1}

When $\mu=\mu_1$, Proposition~\ref{ImprovSet} gives a sufficient value of $\ell$ with the incentive for direct repairs when the number of customers $i$ satisfies $i>\ell$. Hence, we will consider candidates for average expected cost reductions of the form $\phi_{T,0,\ell}$.
In order to analyse these candidate policies, we will, as a matter of exception consider a sequence of history dependent policies.

For any $T\geq 0$, we define $\phi_{T,0,\ell}^{j}$ as the (history dependent) policy that  pays $c_r$ for a direct repair when control is lost in states $(i,1)$ with $i> \ell$, and for all other states, first waits $T$ time before paying $c_r$ for a repair. After exactly $j\in \mathbb{N}$ repairs, the sensor will never be repaired again. During control, we take the control actions as prescribed by control policy $\Tilde{\phi}_c$.
We will show that, under Assumption~\ref{criticalassump}, there exists a time $T\geq 0$, such that $\phi_{T,0,\ell}^{1}$ gives a single cost reduction over not paying for a control period at all, for all starting states.  subsequently, we will show that in this case, we can reduce the average expected costs by a telescoping sum over the sequence $\{\phi_{T,0,\ell}^{j}\}_{j\in \mathbb{N}}$.
The waiting time $T$  balances two effects: waiting longer improves proximity to equilibrium, but delays the benefit of restoring control.

In order to determine a sufficient choice of $T$, we need to use an upper bound of $H^\ast_{\infty}(k,0)-H^\ast_{\infty}(k,1)$ for a later specified value of $k\in \mathbb{N}$. As explained in Kanavetas et al.~\cite{Koopmans}, the value $H^\ast_{\infty}(k,0)$ can be determined explicitly and  for the $\epsilon$-approximation $\tilde{H}_\epsilon(k,1)$, determined by Lemma~5.1 of \cite{Koopmans}, it holds that $|\tilde{H}_\epsilon(k,1)-\tilde{H}_\epsilon(k,1)| \leq \epsilon $. As such we can explicitly determine 
\begin{equation}\label{eq:Ukdef}
    U_k= H^\ast_{\infty}(k,0)-\tilde{H}_1(k,1)+1
\end{equation}
as an upper bound of $H^\ast_{\infty}(k,0)-H^\ast_{\infty}(k,1)$ for any $k\in \mathbb{N}$. We note that this upper bound is also valid in the case $\mu=\mu_2$.

\begin{restatable}{theorem}{TMu1}\label{mufismu1}
Let \begin{equation}\label{eq:T1def}
    T = \frac{\log(6 U_k(1+\rho_\mu^{-\ell/2})/\Delta_c)}{(\sqrt{\mu}-\sqrt{\lambda})^2},
\end{equation}
with $k=\tilde{k}_{2\Delta_c/3}$  from Equation~(6.2) in \cite{Koopmans} and with  $U_k$  defined in Equation~(\ref{eq:Ukdef}). 
   Then,
     \begin{equation*}
    \lim_{\alpha \downarrow 0} \Big(   V^{\phi_0}_{\alpha}(i) -  V^{\phi^1_{T,0,\ell}}_{\alpha}(i)\Big)\geq \frac{\Delta_c}{3}, \qquad
    \text{ for } 
    i\in S.
   \end{equation*}
\end{restatable}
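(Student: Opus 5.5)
The plan is to read $\lim_{\alpha\downarrow 0}(V^{\phi_0}_\alpha(i)-V^{\phi^1_{T,0,\ell}}_\alpha(i))$ as the expected no-repair saving from one control period, started from the queue length at the repair time, minus $c_r$. I interpret $i\in S$ as the initialisation state with a broken sensor. There $\phi_0$ chooses $t=\infty$, while $\phi^1_{T,0,\ell}$ chooses $t=T$ if $i\le \ell$ and $t=0$ if $i>\ell$. In both cases it then performs exactly one repair and never repairs again.

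Up to the repair time $R$ both policies use rate $\mu$, so they incur identical costs. I condition on $X(R)=(m,0)$, the queue length at the repair instant. From then on $\phi_0$ continues in the no-repair baseline from $(m,0)$. The policy $\phi^1_{T,0,\ell}$ pays $c_r$ and runs the no-repair model under $\tilde\phi_c$ from $(m,1)$. The discounted difference is therefore
\[
\mathbb{E}\Big[e^{-\alpha R}\big(V_\alpha(m,0)-V_\alpha^{\tilde\phi_c}(m,1)-c_r\big)\Big],
\]
with $R\in\{0,T\}$ deterministic. Theorem~\ref{convergenceVI2}, together with optimality of $\tilde\phi_c$, gives $V_\alpha(m,0)-V^{\tilde\phi_c}_\alpha(m,1)\to H^\ast_\infty(m,0)-H^\ast_\infty(m,1)$ for every fixed $m$. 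Since $e^{-\alpha R}\to1$, the limit equals $\sum_m P_{im}(R)\,(H^\ast_\infty(m,0)-H^\ast_\infty(m,1))-c_r$, once the limit and the sum over $m$ are exchanged.

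For $i>\ell$ we have $R=0$ and $m=i$. Proposition~\ref{ImprovSet} then gives a gain of at least $c_r^\ast-c_r=\Delta_c\ge\Delta_c/3$.

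For $i\le\ell$ I write $D(m)=H^\ast_\infty(m,0)-H^\ast_\infty(m,1)\ge0$, which is non-negative by Proposition~\ref{MonotDiff}. I drop the terms $m>k$ and obtain
\[
\sum_m P_{im}(T)D(m)\ \ge\ \sum_{m\le k}\pi_\mu(m)D(m)-\sum_{m\le k}|P_{im}(T)-\pi_\mu(m)|\,D(m).
\]
The second sum is bounded by using $D(m)\le D(k)\le U_k$ for $m\le k$, which holds because $D$ is non-decreasing for $\mu=\mu_1$. Proposition~\ref{TVexpconverg} then applies with $\rho_\mu^{-i/2}\le\rho_\mu^{-\ell/2}$. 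With the choice of $T$ in (\ref{eq:T1def}) this error is at most $2U_k(1+\rho_\mu^{-\ell/2})\cdot\Delta_c/(6U_k(1+\rho_\mu^{-\ell/2}))=\Delta_c/3$.

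For the first sum I use the defining property of $k=\tilde k_{2\Delta_c/3}$ from \cite{Koopmans}. I take this to be that the truncated critical cost satisfies $\sum_{m\le k}\pi_\mu(m)D(m)\ge c_r^\ast-\Delta_c/3$, i.e.\ the $\pi_\mu$-tail beyond $k$ is at most $\Delta_c/3$. If the precise statement there is phrased differently, the tolerance is to be matched here. Combining the two bounds gives a gain of at least $c_r^\ast-\Delta_c/3-\Delta_c/3-c_r=\Delta_c/3$.

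The main obstacle is the exchange of $\lim_{\alpha\downarrow0}$ with the infinite sum over $m$, because $D$ and $V_\alpha$ are unbounded. I would dominate $|V_\alpha(m,0)-V_\alpha^{\tilde\phi_c}(m,1)|$ uniformly in small $\alpha$ by a function of $m$ growing like $\sum_j h(j)$-type sums. The first thing to try is comparing both processes to the baseline $\mu_1$-queue from $m$, which gives polynomial or geometric-in-$h$ growth. I would then show that this dominating function is summable against $P_{im}(T)$. That summability should follow because $P_{im}(T)$ has geometric tails for fixed $i\le\ell$ and fixed $T$, and Assumption~\ref{holdingassump} guarantees $\sum h(j)x^j<\infty$. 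Dominated convergence then justifies the exchange.
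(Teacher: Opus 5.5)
Your proposal follows the paper's proof essentially step for step. It uses the same split into $i>\ell$ (handled by Proposition~\ref{ImprovSet}) and $i\le\ell$, and the same truncation at $k=\tilde k_{2\Delta_c/3}$. It also bounds $D(m)\le D(k)\le U_k$ via Proposition~\ref{MonotDiff}, and applies Proposition~\ref{TVexpconverg} with $\rho_\mu^{-i/2}\le\rho_\mu^{-\ell/2}$ to make the error at most $\Delta_c/3$.

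The one difference is the limit interchange you flag as the main obstacle. The paper avoids it by discarding the terms $m>k$ \emph{before} letting $\alpha\downarrow0$. Each discounted difference $V_\alpha(m,0)-V^\ast_\alpha(m,1)$ is already nonnegative, because with $c_{\mu_1}=0$ the controller can mimic the baseline by always using $\mu_1$. Only a finite sum then remains, and its limit is immediate. Since you only need a lower bound, this truncation (or Fatou's lemma) makes your domination estimate unnecessary. Completing that estimate would, however, additionally show that the limit exists, which the paper takes for granted.
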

\begin{proof}
We 
will use Theorem~\ref{convergenceVI2}.
    For states $i> \ell$, we get 
    \begin{align*}
    \lim_{\alpha \downarrow 0} \Big(   V^{\phi_0}_{\alpha}(i) -  V^{\phi^1_{T,0,\ell}}_{\alpha}(i)\Big) &=  \lim_{\alpha \downarrow 0} \Big(   V^{\ast}_{\alpha}(i,0) -  V^\ast_{\alpha}(i,1)\Big) -c_r  \\
    &= H^\ast_\infty(i,0)-H^\ast_\infty(i,1)-c_r
    \geq c^\ast_r-c_r = \Delta_c.
   \end{align*}
   By Theorem~6.2 from  \cite{Koopmans} for $k=\tilde{k}_{2\Delta_c/3}$ it holds that $$\sum_{i=0}^k \pi_\mu(i)(H^\ast_\infty(i,0)-H^\ast_\infty(i,1)) \geq c^\ast_r-\frac{\Delta_c}{3}.$$
   For states $i\leq\ell$, we use this value of $k$ and find 
   \begin{align} \label{eq:mufmu1p1}
   \begin{split}
   & \lim_{\alpha \downarrow 0} \Big(   V^{\phi_0}_{\alpha}(i) -  V^{\phi^1_{T,0,\ell}}_{\alpha}(i)\Big) =  \lim_{\alpha \downarrow 0} e^{-\alpha T}\sum_{j=0}^\infty P_{ij}(T)  \Big(V^{\phi_0}_{\alpha}(j,0) -  V^\ast_{\alpha}(j,1)\Big) -c_r \cdot e^{-\alpha \cdot T} \\
  & \quad \quad \geq \lim_{\alpha \downarrow 0} \sum_{j=0}^k P_{ij}(T)  \Big(V^{\phi_0}_{\alpha}(j,0) -  V^\ast_{\alpha}(j,1)\Big) -c_r = \sum_{j=0}^k P_{ij}(T)  \Big(H^\ast_\infty(j,0) -  H^\ast_\infty(j,1)\Big) -c_r \\
  &\quad \quad \geq \sum_{j=0}^k \pi_\mu(j)  \Big(H^\ast_\infty(j,0) -  H^\ast_\infty(j,1)\Big)- \sum_{j=0}^k \Big |P_{ij}(T)-\pi_\mu(j)\Big |  \Big(H^\ast_\infty(j,0) -  H^\ast_\infty(j,1)\Big) -c_r \\
  &\quad \quad\geq c^\ast_r- \frac{\Delta_c}{3} -c_r- \sum_{j=0}^k \Big |P_{ij}(T)-\pi_\mu(j)\Big |  \Big(H^\ast_\infty(j,0) -  H^\ast_\infty(j,1)\Big) \\
 & \quad \quad =  \frac{2\Delta_c}{3} - \sum_{j=0}^k \Big |P_{ij}(T)-\pi_\mu(j)\Big |  \Big(H^\ast_\infty(j,0) -  H^\ast_\infty(j,1)\Big).
     \end{split}
   \end{align}
   Next, we bound the sum $\sum_{j=0}^k \Big |P_{ij}(T)-\pi_\mu(j)\Big |  \Big(H^\ast_\infty(j,0) -  H^\ast_\infty(j,1)\Big)$.   Proposition~\ref{MonotDiff} gives that\\
   $0\leq H^\ast_\infty(i,0) -  H^\ast_\infty(i,1)\leq H^\ast_\infty(k,0) -  H^\ast_\infty(k,1)$, for $i\leq k$. Using that $H^\ast_\infty(k,0) -  H^\ast_\infty(k,1) \leq U_k$ and plugging in Proposition~\ref{TVexpconverg} and our choice of $T$ results in
   \begin{equation}
   \begin{split}
       \label{eq:mufmu1p2}
       &\sum_{j=0}^k \Big |P_{ij}(T)-\pi_\mu(j)\Big |  \Big(H^\ast_\infty(j,0) -  H^\ast_\infty(j,1)\Big)
       \leq U_k \sum_{j=0}^k \Big |P_{ij}(T)-\pi_\mu(j)\Big |  \\
      &\quad\quad \quad  \leq U_k \sum_{j=0}^\infty \Big |P_{ij}(T)-\pi_\mu(j)\Big | \leq 2 U_k(1+\rho_\mu^{-i/2})e^{-(\sqrt{\mu}-\sqrt{\lambda})^2 T}=\frac{\Delta_c}{3}.
        \end{split}
   \end{equation}
   Combining Equations~(\ref{eq:mufmu1p1}) and (\ref{eq:mufmu1p2}) then gives
   \begin{align*} 
       \lim_{\alpha \downarrow 0} \Big(   V^{\phi_0}_{\alpha}(i) -  V^{\phi^1_{T,0,\ell}}_{\alpha}(i)\Big)\geq   \frac{2\Delta_c}{3}-\frac{\Delta_c}{3}=  \frac{\Delta_c}{3}.
   \end{align*}

\end{proof}
\noindent In fact, we note that we can determine sufficiently large values of $T$ such that\\
$  \lim_{\alpha \downarrow 0} \Big(   V^{\phi_0}_{\alpha}(i) -  V^{\phi^1_{T,0,\ell}}_{\alpha}(i)\Big)\geq \epsilon$, for any $0<\epsilon< \Delta_c$. 

Furthermore,  the process under policy $\phi_{T,0,\ell}$ regenerates at successive returns to state $(0,1)$, thus inducing a renewal structure.
\begin{restatable}{proposition}{RenRew}\label{renrewmu1}
For any value of  $ T\in \mathbb{R}$, the (non-discounted) process under policy $\phi_{T,0,\ell}$ induces a renewal reward process defined  by random variables $S_i,W_i$ with $i\in \mathbb{N}_{\geq 1}$, that denote the time and costs between the $(i-1)$-th and $i$-th entrances in state $(0,1)$, respectively. 
For this renewal reward process both $\mathbb{E}[S_1]<\infty$ and $\mathbb{E}[|W_1|]<\infty$.
\end{restatable}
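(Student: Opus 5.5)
We first establish the renewal structure, and then bound the cycle length and the cycle cost by a single geometric-trials argument.

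\textbf{Regeneration.} Under $\phi_{T,0,\ell}$ the process is time-homogeneous and Markov on the enlarged state space that includes the residual repair time $t$ in states $(i,0,t)$. The policy is stationary and depends only on the current state, and the residual time is deterministic once it is set. Hence each entrance into $(0,1)$ is a stopping time at which the process starts afresh, independently of the past, by the strong Markov property. So the cycles between successive entrances into $(0,1)$ are i.i.d., and $(S_i,W_i)$ form a renewal reward sequence. The same holds for the first cycle if the process starts in $(0,1)$; otherwise the first cycle is a delayed one, which does not affect the claims.

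\textbf{Dominating cycles.} To show $\mathbb{E}[S_1]<\infty$, we compare with an embedded structure of ``sensor cycles''. A sensor cycle consists of a control period of length $\mathrm{Exp}(\beta)$ followed by a vacation of length at most $T$. The vacation has length $0$ if $i>\ell$ at the breakdown, and exactly $T$ otherwise. Every sensor cycle therefore has length at most $\mathrm{Exp}(\beta)+T$, which has finite mean.

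\textbf{Bounding the number of sensor cycles.} It suffices to show that, from any starting state, the probability that the queue hits $0$ while the sensor is active within a single sensor cycle is bounded below by a constant $p>0$. A uniform bound over all queue lengths is not available, so we use a Lyapunov/drift argument.

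In both the active phase (rates at least $\mu_1$ or $\mu_2$, and $\mu_2\ge\mu_1$) and the vacation phase (rate $\mu>\lambda$), the queue is dominated by an M/M/1 queue with some service rate $\bar\mu$. If $\mu=\mu_1$ this is $\bar\mu=\mu_1>\lambda$. Then $\mathbb{E}[\,\text{queue at the start of the next control period}\,]$ grows by at most $\lambda(\mathbb{E}\,\mathrm{Exp}(\beta)+T)$ per cycle minus the drift. I would rather argue via the following standard approach.

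Consider the positive recurrent M/M/1 queue with rate $\mu_1$, which dominates the queue length at all times under any of our service choices. Its excursions from $0$ have finite mean. From each visit of this dominating queue to $0$, the actual queue is also at $0$. The sensor is active at that moment with probability bounded below: either it is already active, or we wait, and with positive probability no arrivals occur during the at most $T$ remaining vacation time, after which the next control period starts at $0$. Thus the number of returns of the dominating $\mu_1$-queue to $0$ needed before entering $(0,1)$ is dominated by a geometric random variable. By Wald's identity, $\mathbb{E}[S_1]$ is at most the geometric mean times (the mean excursion length plus $T$), which is finite.

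For $\mathbb{E}|W_1|$: all costs are nonnegative. The cost equals $\int_0^{S_1}(c_{\mu_2}+h(X_t))\,dt$ plus $c_r$ times the number of repairs. The number of repairs is at most the number of breakdowns, whose mean is at most $\beta\,\mathbb{E}[S_1]+1$. The holding cost is dominated by the integral of $h$ of the dominating $\mu_1$-queue over the geometrically many excursions. Its mean per excursion is finite because $\sum_i h(i)\rho_{\mu_1}^i<\infty$ by Assumption~\ref{holdingassump}, via the renewal-reward identity for the dominating queue. Wald's identity again gives finiteness.

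\textbf{Main obstacle.} The hardest step is making the coupling rigorous. The coupling must let the dominating $\mu_1$-queue share arrivals and use thinned service events, so that the actual queue is at most the dominating one pathwise. We must also justify the lower bound on the probability of restoring control at $0$ uniformly, handling the deterministic waiting times through the strong Markov property at the hitting times.
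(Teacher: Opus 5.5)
Your final argument is correct and is essentially the paper's proof. You dominate the queue by the $\mu_1$-queue, take a geometric number of busy periods (success probability at least $e^{-\lambda T}$, each of mean at most $T+1/(\mu_1-\lambda)$), apply Wald's identity, and bound the holding costs by the renewal-reward identity for the $\mu_1$-queue together with Assumption~\ref{holdingassump}. You also bound the repair costs $c_r$ in $W_1$, which the paper's bound on $\mathbb{E}[|W_1|]$ leaves out. The abandoned drift fragment and the unused ``sensor cycles'' paragraph should be deleted.
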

\begin{proof}
    As the policy is Markovian, both the renewal interval lengths and the costs per renewal are i.i.d.  

We give an upper bound of 
$\mathbb{E}[S_1]$ by analysing the process and its busy periods (that is; time intervals with at least one customer in the system).
    Starting in state $(0,1)$, the first jump occurs after an expected time of $1/(\lambda+\beta)$. It is either an arrival and the first busy period commences, or it is a sensor breakdown and we jump to state $(0,0,T)$. 
    
    Since $\lambda<\mu=\mu_1$, we note that the expected duration of a busy period is bounded from above by the expected duration of the busy period in the $M/M/1$ queue using only $\mu_1$, which is $1/(\mu_1-\lambda)<\infty$. 
    At the end of a busy period, we are either in $(0,1)$ or in a state $(0,0,t)$ with $0<t\leq T$. 
    
    In states  $(0,0,t)$ with $0<t\leq T$, with a probability of at least $e^{-\lambda T}$, we reach sensor repair and state $(0,1)$ before a customer joins the system. If this does not happen,  the next busy period starts after at most $T$ time units. Hence, this next busy period would end within an expected $T+1/(\mu_1-\lambda)$ time units.
    
    Therefore, the expected time $\mathbb{E}[S_1]$ is bounded from above by $1/(\lambda+\beta)$ plus the expected amount of busy periods before reaching $(0,1)$ multiplied by $T+1/(\mu_1-\lambda)$. It remains to calculate the expected number of busy periods till renewal. 
    
    Recall that the success probability of reaching $(0,1)$ after a busy period before the next busy period commences is at least $e^{-\lambda T}$. Using the expectation of the geometric distribution, the expected amount of busy periods before reaching $(0,1)$ is at most $
    e^{\lambda T} $. 
    
    Thus, we find the upper bound $$\mathbb{E}[S_1] \leq \frac{1}{\lambda+\beta} +e^{\lambda T}\Big(T+\frac{1}{\mu_1-\lambda}\Big). $$
    
    Similarly, the expected service rate costs between renewals are upper bounded by \\
    $$c_{\mu_2}\cdot \mathbb{E}[S_1] \leq c_{\mu_2}\cdot \Big( \frac{1}{\lambda+\beta} +e^{\lambda T}\Big(T+\frac{1}{\mu_1-\lambda}\Big)\Big) .$$

   For the holding costs, we can use a similar argument. The holding costs of one busy period are maximal when only service rate $\mu_1$ is used. This follows, as the amount of customers present in the process when only using $\mu_1$, is stochastically larger or equal to the amount of customers in the controlled processes, at any time $t$. Holding costs are only incurred in the busy period (as $h(0)=0$). Therefore, the holding costs of one busy period  using only $\mu_1$ is an upper bound for the expected holding costs in one busy period. 
   
   The average expected holding costs using only $\mu_1$ is finite by Assumption~\ref{holdingassump} and given by $$g_h:=\sum_{i=0}^\infty \pi_\mu(i)\cdot h(i) <\infty.$$ 
  As mentioned before,  the moments that this system reaches state $(0,1)$ are renewal times. This induces a renewal reward process. The expected time  between successive renewals is the expected time until the first arrival plus the expected time of the busy period, which equals $1/\lambda+1/(\mu_1-\lambda)
   =\mu_1/(\lambda(\mu_1-\lambda))
   $.
   Using the renewal theorem (and the fact that holding costs are only incurred during the busy period), we find expected total holding costs $g_h\cdot 
   \mu_1/(\lambda(\mu_1-\lambda))$ per busy period. Using similar arguments, we find an upper bound of the total holding costs between two consecutive entrances of $(0,1)$ which is given by $$e^{\lambda T}\cdot g_h\cdot \frac{\mu_1}{\lambda(\mu_1-\lambda)}<\infty.$$ 
   Combining service rate cost and holding costs gives 
   $$ \mathbb{E}[|W_1|]\leq   c_{\mu_2} \Big( \frac{1}{\lambda+\beta} +e^{\lambda T}\Big(T+\frac{1}{\mu_1-\lambda}\Big)\Big)+e^{\lambda T}\cdot g_h\cdot \frac{\mu_1}{\lambda(\mu_1-\lambda)}<\infty.$$
   
\end{proof}

To relate the average expected cost of the renewal reward process to a vanishing discount approach we need a Tauberian theorem. Note that the following Tauberian theorem follows from Widder \cite{Widder}  and can be found explicitly in Blok and Spieksma \cite{mdppractice} and in Sennott \cite{sennott}.

\begin{theorem}\label{Thm:Tauber}[Tauberian theorem]
    Let $s$ be a non-negative measurable function such that $S_T:= \int_0^T s(t)dt$ for $T\geq 0$ and $V(\alpha):=\int_0^\infty e^{-\alpha t} s(t) dt$ for $\alpha>0$. Then, \begin{equation*}
       \liminf_{T\rightarrow \infty}\frac{S_T}{T} \leq \liminf_{\alpha\downarrow 0}\alpha V(\alpha) \leq  \limsup_{\alpha\downarrow 0}\alpha V(\alpha)\leq  \limsup_{T\rightarrow \infty}\frac{S_T}{T}.
    \end{equation*}
\end{theorem}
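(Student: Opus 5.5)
This is the classical Hardy–Littlewood/Widder comparison between Cesàro and Abel means for a non-negative integrand. The middle inequality is trivial, and the outer two are symmetric. I will write out the right-hand inequality and then indicate the changes for the left-hand one.

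\emph{Setup and upper inequality.} Write $L:=\limsup_{T\to\infty}S_T/T$ and assume $L<\infty$; otherwise there is nothing to prove. Because $s\geq 0$, the map $T\mapsto S_T$ is non-decreasing and continuous. For $\alpha>0$ I integrate by parts (Fubini/Tonelli, valid since $s\ge 0$):
\[
V(\alpha)=\int_0^\infty e^{-\alpha t}s(t)\,dt=\int_0^\infty \alpha e^{-\alpha T}S_T\,dT .
\]
Indeed, $\int_t^\infty \alpha e^{-\alpha T}dT=e^{-\alpha t}$, so exchanging the order of integration in $\int_0^\infty\alpha e^{-\alpha T}\int_0^T s(t)\,dt\,dT$ gives the identity, including the case where both sides are $+\infty$. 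Multiplying by $\alpha$ yields
\[
\alpha V(\alpha)=\int_0^\infty \alpha^2 e^{-\alpha T}S_T\,dT ,
\]
where $\alpha^2 T e^{-\alpha T}$ is a probability density in $T$ (a Gamma$(2,\alpha)$ density). Fix $\varepsilon>0$ and choose $T_0$ with $S_T\le (L+\varepsilon)T$ for all $T\ge T_0$. Splitting the integral at $T_0$ gives
\[
\alpha V(\alpha)\le \alpha^2 T_0 S_{T_0} + (L+\varepsilon)\int_0^\infty \alpha^2 T e^{-\alpha T}\,dT=\alpha^2 T_0S_{T_0}+(L+\varepsilon).
\]
Here $S_{T_0}<\infty$: since $L<\infty$, we have $S_T<\infty$ for large $T$, and $S_T$ is monotone. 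Letting $\alpha\downarrow 0$ and then $\varepsilon\downarrow 0$ gives $\limsup_{\alpha\downarrow0}\alpha V(\alpha)\le L$.

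\emph{Lower inequality.} Write $\ell:=\liminf_{T\to\infty} S_T/T$ and assume $\ell>0$; otherwise the claim is trivial because $\alpha V\ge 0$. Take any $0\le m<\ell$ and choose $T_0$ with $S_T\ge mT$ for $T\ge T_0$. Dropping the non-negative contribution from $[0,T_0]$ gives
\[
\alpha V(\alpha)\ge m\int_{T_0}^\infty \alpha^2 Te^{-\alpha T}\,dT = m\,(1+\alpha T_0)e^{-\alpha T_0}\to m \quad (\alpha\downarrow 0).
\]
This holds also when $V(\alpha)=\infty$. Hence $\liminf_{\alpha\downarrow0}\alpha V(\alpha)\ge m$ for every $m<\ell$, which covers $\ell=\infty$ as well.

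\emph{Where care is needed.} The only delicate point is justifying the integration-by-parts identity without any a priori finiteness of $V(\alpha)$. Tonelli handles this because everything is non-negative, and this is exactly where the hypothesis $s\ge 0$ is used. The remaining steps are the split of a probability-weighted average into a vanishing initial piece and a tail controlled by the $\limsup$/$\liminf$ bound.
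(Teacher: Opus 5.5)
Your proof is correct. The paper gives no proof of its own and only cites Widder, Blok--Spieksma and Sennott; your Tonelli identity $\alpha V(\alpha)=\int_0^\infty \alpha^2 e^{-\alpha T}S_T\,dT$, followed by splitting this Gamma$(2,\alpha)$-average at $T_0$, is the standard Abelian argument behind those references, and it also covers the case $V(\alpha)=\infty$. The only inaccuracy is your claim that $T\mapsto S_T$ is continuous, which can fail when $s$ is not locally integrable, but your proof never uses it.
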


Note that the expected repair costs in $V_\alpha^{\phi_{A,B,\ell}}(i)$ given in Equation~(\ref{eq:Valpha}) for initialisation state $i\in S$  matches this form with the addition of a constant, as we can write
 $$\mathbb{E}_i^{\phi_{A,B,\ell}}\Big[ \sum_{k=1}^\infty e^{-\alpha R(k)} \cdot c_r\Big]=c_r\cdot e^{-\alpha R(1)}+ c_r\cdot \int_{t=R(1)}^\infty e^{-\alpha t} n(t) dt,$$ where $R(1)\in \{A,B\}$ is deterministic and $n(t)$ is the density of $\mathbb{E}^{\phi_{A,B,\ell}}_i[N(t)]$ with respect to the Lebesgue measure.   To show absolute continuity of $\mathbb{E}^{\phi_{A,B,\ell}}_i[N(t)]$ as a function of $t$, it suffices to show Lipschitz continuity. This follows 
from the fact that the losses of control occur with exponential rate $\beta$ so that $$0\leq \mathbb{E}_i^{\phi_{A,B,\ell}}[N(t+\delta t)] - \mathbb{E}_i^{\phi_{A,B,\ell}}[N(t)]\leq 2\beta \delta t$$ for $t\geq R(1)$ and $\delta t>0$ (note that the number of repairs in a period can not be larger than the number of sensor failures in an interval of this length).


Proposition \ref{renrewmu1}, the elementary renewal reward theorem and the Tauberian theorem combined give the following corollary.

\begin{corollary}\label{taubercor}
The process under policy
$\phi_{T,0,\ell} $  has a finite average expected cost $g_{(0,1)}^{\phi_{T,0,\ell} }$, which is equal to $\lim_{\alpha \downarrow 0} \alpha V_\alpha^{\phi_{T,0,\ell}}(i)$.
\end{corollary}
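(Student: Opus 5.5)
The plan is to combine Proposition~\ref{renrewmu1}, the renewal reward theorem and the Tauberian theorem (Theorem~\ref{Thm:Tauber}), handling the repair-cost term separately as indicated in the remark preceding the corollary.

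\textbf{Step 1: finiteness of the average cost.} Starting from $(0,1)$, Proposition~\ref{renrewmu1} gives i.i.d.\ cycles with $\mathbb{E}[S_1]<\infty$ and $\mathbb{E}[|W_1|]<\infty$. Here $W_1$ also includes the repair costs in a cycle. There are at most as many of these as sensor failures in the cycle, so their expectation is at most $c_r(1+\beta\mathbb{E}[S_1])<\infty$. The elementary renewal reward theorem then gives
\[
\lim_{T\to\infty}\frac{1}{T}\,\mathbb{E}_{(0,1)}[\text{cost in }[0,T]]=\frac{\mathbb{E}[W_1]}{\mathbb{E}[S_1]}=:g<\infty .
\]
So the $\limsup$ in the definition of $g^{\phi_{T,0,\ell}}_{(0,1)}$ is a true limit and equals $g$.

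\textbf{Step 2: other initial states.} For an initial state $i$ (or $(i,1)$, $(i,0,t)$), consider the first hitting time $\tau$ of $(0,1)$. The same busy-period argument as in Proposition~\ref{renrewmu1} bounds $\mathbb{E}_i[\tau]$ and the expected cost before $\tau$. Busy periods starting from $i$ customers have finite expected length $i/(\mu_1-\lambda)$ and finite expected holding cost, by stochastic comparison with the $\mu_1$-queue and the growth condition in Assumption~\ref{holdingassump}. The process is therefore a delayed renewal reward process with a finite first cycle, and its long-run average is again $g$.

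\textbf{Step 3: apply the Tauberian theorem.} Let $s(t)$ be the expected cost rate, i.e.\ $\mathbb{E}_i[c(X_t)]$ plus $c_r n(t)$ for $t\ge R(1)$, where $n(t)$ is the Lipschitz density constructed above. This $s$ is non-negative and measurable. The only remaining part of $V_\alpha$ is the constant term $c_r e^{-\alpha R(1)}$, and $\alpha c_r e^{-\alpha R(1)}\to0$. Fubini/Tonelli identifies $\int_0^\infty e^{-\alpha t}s(t)\,dt$ with $V_\alpha^{\phi_{T,0,\ell}}(i)-c_re^{-\alpha R(1)}$, and $\int_0^T s(t)\,dt$ with the expected cost up to $T$, up to the same bounded constant. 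Since Steps 1--2 show that the $\liminf$ and $\limsup$ of $S_T/T$ coincide and equal $g$, Theorem~\ref{Thm:Tauber} squeezes $\alpha V_\alpha^{\phi_{T,0,\ell}}(i)\to g$.

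\textbf{Main obstacle.} The main difficulty is Step 2: controlling the delay cycle uniformly enough to get finite expectations from an arbitrary starting state. In particular, the expected holding cost of a busy period started with $i$ customers must be finite. I would handle this by dominating the controlled queue by the $\mu_1$-queue, which is stochastically larger, and bounding its busy-period cost via a Lyapunov/drift argument, using that $h$ has finite generating function at $\rho_\mu$.
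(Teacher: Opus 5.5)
Your proposal is correct and follows the paper's route: Proposition~\ref{renrewmu1} with the elementary renewal reward theorem gives a finite long-run average cost. Theorem~\ref{Thm:Tauber} then identifies that average with $\lim_{\alpha\downarrow 0}\alpha V_\alpha^{\phi_{T,0,\ell}}(i)$, using the paper's remark that splits off the deterministic first repair term $c_r e^{-\alpha R(1)}$ and uses the Lipschitz density $n(t)$. Your Steps~1 and~2 usefully make explicit two points the paper leaves implicit: the repair costs must be included in $W_1$ (the bound in the proof of Proposition~\ref{renrewmu1} only covers service and holding costs), and starting in $i$ rather than $(0,1)$ gives a delayed renewal process whose first cycle has finite expected length and cost.
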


Next, we have to show that we can reduce the average expected costs by repetitive sensor repairs by considering $\phi_{T,0,\ell}$ through the sequence $\{\phi^j_{T,0,\ell}\}_{j\in \mathbb{N}}$.


\begin{restatable}{theorem}{TMu1avg}\label{mufismu1avg}
   Let $T$ be given by Equation~(\ref{eq:T1def}). 
    Then,
    $$g^{\phi_{T,0,\ell}}_{(0,1)}\leq g_\mu-\frac{\Delta_c}{4(T+1/\beta)}.$$
\end{restatable}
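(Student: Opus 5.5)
We will work entirely with discounted values and pass to the average cost through Corollary~\ref{taubercor}. Write $D_\alpha(s)=V^{\phi_0}_\alpha(s)-V^{\phi_{T,0,\ell}}_\alpha(s)$. The key tool is the telescoping decomposition
\[
D_\alpha(s)=\lim_{j\to\infty}\sum_{m=1}^{j}\Big(V^{\phi^{m-1}_{T,0,\ell}}_\alpha(s)-V^{\phi^{m}_{T,0,\ell}}_\alpha(s)\Big),
\]
with $\phi^0_{T,0,\ell}=\phi_0$. The limit $j\to\infty$ is justified for fixed $\alpha>0$ by dominated convergence, since costs are discounted and all policies have finite discounted cost by the bounds of Proposition~\ref{renrewmu1}.

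\textbf{Step 1: bound each increment.} The policies $\phi^{m-1}_{T,0,\ell}$ and $\phi^{m}_{T,0,\ell}$ coincide up to the moment the sensor fails for the $m$-th time. That moment is the stopping time $\tau_m$, and it is common to both policies. At $\tau_m$ the state is $(i,1)$, which leads to the initialisation state $i$ with some queue length $i$. From then on, $\phi^{m-1}$ never repairs again, while $\phi^{m}$ behaves as $\phi^1_{T,0,\ell}$ started from $i$. By the strong Markov property,
\[
V^{\phi^{m-1}}_\alpha(s)-V^{\phi^{m}}_\alpha(s)=\mathbb{E}_s\Big[e^{-\alpha\tau_m}\big(V^{\phi_0}_\alpha(X_{\tau_m})-V^{\phi^1_{T,0,\ell}}_\alpha(X_{\tau_m})\big)\Big].
\]
Theorem~\ref{mufismu1} gives that the bracket tends to at least $\Delta_c/3$ for every $i$ as $\alpha\downarrow0$. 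What we actually need is a version that is uniform in $i$. For $i>\ell$, Proposition~\ref{MonotDiff} makes the bracket non-decreasing in $i$. For $i\le\ell$ there are only finitely many states. Hence for all sufficiently small $\alpha$ the bracket is at least $\Delta_c/3-\varepsilon$, say $\ge\Delta_c/4\cdot(1+\text{slack})$; the precise constant $4$ in the statement absorbs this slack.

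\textbf{Step 2: sum over $m$.} Summing the increments gives
\[
D_\alpha(s)\ge \tfrac{\Delta_c}{4}\sum_{m\ge1}\mathbb{E}_s[e^{-\alpha\tau_m}].
\]
The time between two consecutive failures under $\phi_{T,0,\ell}$ consists of a waiting time of at most $T$ followed by an independent $\exp(\beta)$ lifetime. So $\tau_m$ is stochastically dominated by $mT$ plus a sum of $m$ i.i.d.\ exponentials. This yields
\[
\mathbb{E}[e^{-\alpha\tau_m}]\ge\Big(e^{-\alpha T}\tfrac{\beta}{\beta+\alpha}\Big)^m\quad(\text{up to the first interval}),
\]
and therefore
\[
\alpha\sum_m\mathbb{E}[e^{-\alpha\tau_m}]\ge \alpha\frac{q}{1-q}\to\frac{1}{T+1/\beta},\qquad q=e^{-\alpha T}\tfrac{\beta}{\beta+\alpha}.
\]
The limit holds because $1-q\sim\alpha(T+1/\beta)$.

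\textbf{Step 3: pass to average costs.} Multiply by $\alpha$ and let $\alpha\downarrow0$. We use $\alpha V^{\phi_0}_\alpha\to g_\mu$ (Theorem~\ref{convergenceVI2}) and $\alpha V^{\phi_{T,0,\ell}}_\alpha\to g^{\phi_{T,0,\ell}}_{(0,1)}$ (Corollary~\ref{taubercor}). Together these give $g_\mu-g^{\phi_{T,0,\ell}}_{(0,1)}\ge \Delta_c/(4(T+1/\beta))$.

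\textbf{Main obstacle.} The hardest part is Step~1's uniformity: Theorem~\ref{mufismu1} is only pointwise in the limit $\alpha\downarrow0$, while the sum over $m$ needs a uniform-in-state lower bound at fixed small $\alpha$. We intend to handle this by writing the discounted difference explicitly as $e^{-\alpha T}\sum_jP_{ij}(T)(V^{\ast}_\alpha(j,0)-V^\ast_\alpha(j,1))-c_r$ and using monotonicity in $j$ of $V_\alpha(j,0)-V_\alpha(j,1)$. This monotonicity is the discounted analogue of Proposition~\ref{MonotDiff} and follows by the same induction. It lets the argument of Theorem~\ref{mufismu1} be repeated at fixed $\alpha$ with an $o(1)$ error independent of $i$. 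That error is where the margin between $\Delta_c/3$ and $\Delta_c/4$ is spent.
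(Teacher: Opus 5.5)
Your proposal is correct and follows essentially the same route as the paper's proof. It uses the telescoping sum over $\phi^j_{T,0,\ell}$, and a uniform lower bound of $\Delta_c/4$ for small $\alpha$ from the finitely many states $i\le \ell+1$ plus monotonicity for $i>\ell$. It then applies the pathwise bound $\tau_j\le jT+\sum_{m=1}^j\sigma_m$ with the Laplace--Stieltjes transform $\beta/(\beta+\alpha)$, and Corollary~\ref{taubercor} to pass to average costs. You also correctly note that the monotonicity needed is the discounted analogue of Proposition~\ref{MonotDiff}, which the paper uses without comment. Starting the sum at $m=1$, which gives $q/(1-q)$ instead of $1/(1-q)$, is harmless since $q\to 1$.
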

\begin{proof}
We can take a 
value of $\alpha'>0$ such that
$V^{\phi_0}_{\alpha}(i) -  V^{\phi^1_{T,0,\ell}}_{\alpha}(i) \geq \Delta_c/4$ for
$0<\alpha\leq \alpha',i\leq \ell+1$.
This implies for $0<\alpha\leq \alpha', i>\ell$ that
\begin{equation*}
    \begin{split}
        V^{\phi_0}_{\alpha}(i) -  V^{\phi^1_{T,0,\ell}}_{\alpha}(i)&= V^{\ast}_{\alpha}(i,0) -  V^{\ast}_{\alpha}(i,1) -c_r \geq V^{\ast}_{\alpha}(\ell+1,0) -  V^{\ast}_{\alpha}(\ell+1,1) -c_r\\
        &= V^{\phi_0}_{\alpha}(\ell+1,0) -  V^{\phi^1_{T_1,\ell}}_{\alpha}(\ell+1,0) \geq  \frac{\Delta_c}{4},
    \end{split}
\end{equation*}
 where we used the monotonicity of Proposition~\ref{MonotDiff}.

Let $\tau_0=0$ and let $\tau_j$ be the time the $j$-th control is lost for $j\geq 1$ while using policy $\phi^k_{T,0,\ell}$ with $k\geq j$.  Furthermore, we introduce probability distributions $\pi^{i,j}_{t_j}$ over the number of customers  in the system when starting in state $i\in S$ under any policy $\phi^k_{T_1,\ell}$ with $k\geq j$ at $\tau_j$ conditional on $\tau_j=t_j$.
  We let $\phi^0_{T,0,\ell} =\phi_0$ and note that the costs of the processes under policies $\phi^j_{T,0,\ell} $ and $\phi^{j+1}_{T,0,\ell} $ coincide up to time $\tau_j$.
    Let $0<\alpha\leq \alpha'$. As $\tau_j$ diverges to infinity almost surely as $j\rightarrow \infty$, we note that $V^{\phi^{j}_{T,0,\ell}}_{\alpha}(i) \rightarrow V^{\phi_{T,0,\ell}}_{\alpha}(i) $ as $j\rightarrow \infty$ by the monotone convergence theorem.
    We now consider a telescoping sum over the number of repairs. 
   
    \begin{align}\label{eq:tele1}
         V^{\phi_0}_{\alpha}(i) - V^{\phi_{T,0,\ell}}_{\alpha}(i) &= \sum_{j=0}^{\infty} \Big(V^{\phi^{j}_{T,0,\ell}}_{\alpha}(i) -  V^{\phi^{j+1}_{T,0,\ell}}_{\alpha}(i)\Big)\nonumber  \\
          &= \sum_{j=0}^{\infty} \Big(  \mathbb{E}\Big[  \sum_{m=0}^\infty e^{-\alpha \tau_{j}}\cdot \pi^{i,j}_{\tau_{j}}(m)\Big(  V^{\phi_0}_{\alpha}(m) -  V^{\phi^1_{T,0,\ell}}_{\alpha}(m)\Big)   \Big] \Big)\nonumber  \\
          &\geq \sum_{j=0}^{\infty} \Big(  \mathbb{E}\Big[ e^{-\alpha \tau_{j}} \sum_{m=0}^\infty \pi^{i,j}_{\tau_{j}}(m)\cdot \frac{\Delta_c}{4}  \Big] \Big)\nonumber  \\
          &= \sum_{j=0}^{\infty} \Big(  \mathbb{E}\Big[ e^{-\alpha \tau_{j}}\cdot \frac{\Delta_c}{4}  \Big] \Big) =\frac{\Delta_c}{4} \cdot \sum_{j=0}^{\infty} \mathbb{E}\Big[ e^{-\alpha \tau_{j}}  \Big] ,   
    \end{align}
    where in the second equaliy, we used the fact that the sample paths of the processes under $\phi^j_{T,0,\ell}$ and $ \phi^{j+1}_{T,0,\ell}$ coincide up to time $\tau_j$.

  Let $\sigma_j$ be the length of the $j$-th control period. Before each control period, there might be $T$ time units without control.  Take  $\mathcal{S}_j=j T+\sum_{m=1}^j \sigma_m$ as an upper bound of $\tau_j$.
    Note that the length of the control periods are independent, therefore 
   \begin{equation}\label{laplacestieltjes}
       \mathbb{E}\left[ e^{-\alpha \mathcal{S}_j}\right] =e^{-\alpha jT}\Pi_{m=1}^j\mathbb{E}\left[ e^{-\alpha \sigma_m}\right]= e^{-\alpha jT}\mathbb{E}\left[ e^{-\alpha \sigma_1}\right]^j.
        \end{equation}
The Laplace-Stieltjes transform of the exp$(\beta)$ distributed control period length at $\alpha$  is given by
\begin{equation}\label{laplacestieltjes2}
    \mathbb{E}\left[ e^{-\alpha \sigma_1}\right]=\frac{\beta}{\beta+\alpha}.
\end{equation}

Combining Equations~(\ref{laplacestieltjes}) and (\ref{laplacestieltjes2}) we find that

\begin{align}\label{eq:SomSjes}
\begin{split}
     &
     \frac{\Delta_c}{4} \cdot \sum_{j=0}^{\infty} \mathbb{E}\Big[ e^{-\alpha \mathcal{S}_{j}}  \Big]
     = \frac{\Delta_c}{4}\sum_{j=0}^\infty \Big(e^{-\alpha T}\cdot \frac{\beta}{\beta+\alpha}\Big)^j =  \frac{\Delta_c}{4}\cdot \frac{1}{1-e^{-\alpha T}\cdot \beta/(\beta+\alpha)},
\end{split}
\end{align}
as $e^{-\alpha T}\cdot \beta/(\beta+\alpha)<1$.

We combine Equation~(\ref{eq:tele1}) 
 with Equation~(\ref{eq:SomSjes})
 resulting in 

    \begin{equation*} \label{eq:phioneindig}
    \begin{split}
    &\lim_{n \rightarrow \infty}  \alpha\Big(   V^{\phi_0}_{\alpha}(i) -  V^{\phi_{T,0,\ell}}_{\alpha}(i)\Big)    \\
    &\quad \quad \geq \lim_{\alpha \downarrow 0} \alpha
      \frac{\Delta_c}{4} \cdot \sum_{j=0}^{\infty}  \mathbb{E}\Big[ e^{-\alpha S_{j}}  \Big]
    = \frac{\Delta_c}{4}\cdot \lim_{\alpha \downarrow 0} \frac{\alpha}{1-(e^{-\alpha T}\cdot \beta/(\beta+\alpha))}\\
    &\quad \quad=\frac{\Delta_c}{4}\cdot \lim_{\alpha \downarrow 0} \frac{\alpha(\beta+\alpha)}{(1- e^{-\alpha T})\beta+\alpha}\geq \frac{\Delta_c}{4}\cdot  \lim_{\alpha \downarrow 0} \frac{\alpha\beta}{(1- e^{-\alpha T})\beta+\alpha}\\
    &\quad \quad= \frac{\Delta_c}{4}\cdot \lim_{\alpha \downarrow 0} \frac{\beta}{\beta(1- e^{-\alpha T})/\alpha+1} \\
    &\quad \quad= \frac{\Delta_c}{4}\cdot  \frac{\beta}{T\beta+1}=\frac{\Delta_c}{4(T+1/\beta)}.
    \end{split}
   \end{equation*}

    Combined with Corollary~\ref{taubercor} and the fact that 
$ \lim_{\alpha  \downarrow 0}   \alpha V^{\phi_0}_{\alpha}(i)=g_\mu$, we get that the average expected cost is given by \begin{equation*}
    \begin{split}
    &g^{\phi_{T,0,\ell}}_{(0,1)}=\lim_{\alpha \downarrow 0}  \alpha V^{\phi_{T,0,\ell}}_{\alpha}(i)\leq \lim_{\alpha \downarrow 0}  \alpha    V^{\phi_0}_{\alpha}(i) -  \frac{\Delta_c}{4}\cdot  \frac{\beta}{T\beta+1}=g_\mu-\frac{\Delta_c}{4(T+1/\beta)} .
    \end{split}
   \end{equation*}
   
\end{proof}

\subsection{Fixed rate $\mu=\mu_2$}\label{sect:mu2}

Next, we consider the case $\mu=\mu_2$ (but not necessarily $\mu_1>\lambda$).
First we have to determine an appropriate value for  $\ell$.
Recall, that by virtue of Proposition~\ref{ImprovSet}, $H^\ast_\infty(i,0)-H^\ast_\infty(i,1)\geq c^\ast_r $ for $i\leq \ell$ for $\ell=0$. However, if possible, it is preferable to choose a larger value of $\ell$ with this property, to reduce delays before saving costs by regaining control. This motivates to study policies of the form $\phi_{0,T,\ell}$.

First, we define history dependent policies $\phi_{0,T,\ell}^{j}$ that conduct exactly $j\in \mathbb{N}$ repairs by paying $c_r$ for a control period after waiting $T>0$ time units in states $i> \ell$, and directly paying for control in states $i\leq \ell$. 
 During control, we use control policy $\tilde{\phi}_c$.
 
We will show that, as the critical gap is assumed to be positive, for any $m\in \mathbb{N}_0$ there exists a time $T_m\geq 0$, such that $\phi_{0,T_m,\ell}^{1}$ gives an improvement over not regaining control at all, for all starting states $i\leq m$. 
The truncation level $m$ separates states where improvements are guaranteed from rare large queue lengths, the probability of which can be bounded geometrically.

\begin{restatable}{theorem}{TMu2}\label{mufismu2}
   Let $m\in \mathbb{N}_0$  with \begin{equation}\label{eq:Tm}
        T_m = \frac{\log(6 U_0(1+\rho_\mu^{-m/2})/\Delta_c)}{(\sqrt{\mu}-\sqrt{\lambda})^2},
    \end{equation}
    where $U_0$ is defined in Equation~(\ref{eq:Ukdef}). Then,
     \begin{equation*}
    \lim_{\alpha \downarrow 0} \Big(   V^{\phi_0}_{\alpha}(i) -  V^{\phi^1_{0,T_m,\ell}}_{\alpha}(i)\Big)\geq \frac{\Delta_c}{3},
   \end{equation*}
    for all starting states $i\in S$ with $i\leq m$.
\end{restatable}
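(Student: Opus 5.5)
The proof mirrors Theorem~\ref{mufismu1}, with the roles of the two regions exchanged and the monotonicity of Proposition~\ref{MonotDiff} reversed. Throughout we use Theorem~\ref{convergenceVI2} to pass from discounted value differences to $H^\ast_\infty$, and we split on the starting state.

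\emph{States $i\leq \ell$.} Here $\phi^1_{0,T_m,\ell}$ repairs immediately, so
\[
\lim_{\alpha\downarrow 0}\big(V^{\phi_0}_\alpha(i)-V^{\phi^1_{0,T_m,\ell}}_\alpha(i)\big)=H^\ast_\infty(i,0)-H^\ast_\infty(i,1)-c_r .
\]
By Proposition~\ref{ImprovSet} (or by the choice of $\ell$ verified via Lemma~4.1 of \cite{Koopmans}) this is at least $c^\ast_r-c_r=\Delta_c\geq \Delta_c/3$.

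\emph{States $\ell<i\leq m$.} The policy waits $T_m$ under rate $\mu$, pays $c_r$, and then runs one control period under $\tilde\phi_c$. Conditioning on the queue length at time $T_m$, as in (\ref{eq:mufmu1p1}), gives
\[
\lim_{\alpha\downarrow 0}\big(V^{\phi_0}_\alpha(i)-V^{\phi^1_{0,T_m,\ell}}_\alpha(i)\big)=\sum_{j=0}^\infty P_{ij}(T_m)\big(H^\ast_\infty(j,0)-H^\ast_\infty(j,1)\big)-c_r .
\]
One point needs justification: interchanging $\lim_{\alpha\downarrow0}$ with the infinite sum over $j$. This is handled by dominated convergence, because for $\mu=\mu_2$ the differences $V^\ast_\alpha(j,0)-V^\ast_\alpha(j,1)$ are nonnegative and, via the VI induction of Proposition~\ref{MonotDiff} carried over to the discounted setting, bounded by their value at $j=0$. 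That value is uniformly bounded in $\alpha$ by the convergence in Theorem~\ref{convergenceVI2}.

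Since $\mu=\mu_2$, Proposition~\ref{MonotDiff} gives $0\leq H^\ast_\infty(j,0)-H^\ast_\infty(j,1)\leq H^\ast_\infty(0,0)-H^\ast_\infty(0,1)\leq U_0$ for \emph{all} $j$. This is the key simplification relative to the $\mu_1$ case: no truncation at some $k$ is needed, so no $\Delta_c/3$ loss from a tail, and the full sum compares directly with $c^\ast_r$ from Definition~\ref{CritCost}. We obtain
\[
\sum_j P_{ij}(T_m)\big(H^\ast_\infty(j,0)-H^\ast_\infty(j,1)\big)\geq c^\ast_r-U_0\sum_j|P_{ij}(T_m)-\pi_\mu(j)| .
\]
Proposition~\ref{TVexpconverg} bounds the total variation by $2(1+\rho_\mu^{-i/2})e^{-(\sqrt\mu-\sqrt\lambda)^2T_m}$. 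Since $\rho_\mu<1$ and $i\leq m$, we have $\rho_\mu^{-i/2}\leq\rho_\mu^{-m/2}$, so by the choice (\ref{eq:Tm}) the error term is at most $2U_0(1+\rho_\mu^{-m/2})\cdot\Delta_c/(6U_0(1+\rho_\mu^{-m/2}))=\Delta_c/3$. Hence the limit is at least $c^\ast_r-\Delta_c/3-c_r=2\Delta_c/3\geq\Delta_c/3$.

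The discount factor $e^{-\alpha T_m}$ in front of the post-repair difference and the repair cost tends to $1$ and causes no trouble. The initialisation states $i\in S$ are handled identically, since the repair time is chosen at time $0$. I expect the only delicate step to be the interchange of limit and infinite sum in the middle case, and I would resolve it by the uniform domination argument described there.
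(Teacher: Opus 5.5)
Your proof is correct, and it departs from the paper's in one respect worth noting.

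\textbf{The paper's route.} The paper mirrors Theorem~\ref{mufismu1} literally. It drops the terms $j>k$ with $k=\tilde{k}_{2\Delta_c/3}$, using nonnegativity of the differences. It then invokes Theorem~6.2 of \cite{Koopmans} to get $\sum_{j\leq k}\pi_\mu(j)(H^\ast_\infty(j,0)-H^\ast_\infty(j,1))\geq c^\ast_r-\Delta_c/3$. A further $\Delta_c/3$ goes to the total-variation error, so the paper lands exactly on $\Delta_c/3$.

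\textbf{Your route.} You observe that for $\mu=\mu_2$ the bound $U_0$ dominates the differences for \emph{every} $j$, since they are non-increasing by Proposition~\ref{MonotDiff}. So the truncation is unnecessary: comparing the full sum directly with $c^\ast_r$ yields $2\Delta_c/3$, and Theorem~6.2 of \cite{Koopmans} is not needed at all.

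\textbf{The trade-off.} What the truncation buys the paper is that $\lim_{\alpha\downarrow 0}$ only has to pass through a finite sum. You pay for the infinite sum with a discounted analogue of Proposition~\ref{MonotDiff}. The paper never states this lemma, but the same value-iteration induction goes through with the factor $(1-\alpha)$: the threshold condition becomes $c_{\mu_2}\leq(1-\alpha)(\mu_2-\mu_1)(V_n(i,1)-V_n(i-1,1))$, and every inequality in the induction is preserved.

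\textbf{A lighter alternative.} If you only want the stated inequality, this extra lemma can be avoided. The bound $V^\ast_\alpha(j,0)-V^\ast_\alpha(j,1)\geq 0$ holds trivially, because during control one may always select the rate $\mu$ and thereby reproduce the uncontrolled cost process. Fatou's lemma then already gives
$\liminf_{\alpha\downarrow 0}\sum_j P_{ij}(T_m)(V^\ast_\alpha(j,0)-V^\ast_\alpha(j,1))\geq \sum_j P_{ij}(T_m)(H^\ast_\infty(j,0)-H^\ast_\infty(j,1))$.
That is exactly the level of rigour of the paper's own argument, which likewise establishes only a lower bound rather than existence of the limit. Your dominated-convergence version additionally delivers existence of the limit.
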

\begin{proof}
Theorem~\ref{convergenceVI2} yields 
    \begin{align*}
    \lim_{\alpha \downarrow 0} \Big(   V^{\phi_0}_{\alpha}(i) -  V^{\phi^1_{0,T_m,\ell}}_{\alpha}(i)\Big) &=  \lim_{\alpha \downarrow 0} \Big(   V^\ast_{\alpha}(i,0) -  V^\ast_{\alpha}(i,1)\Big) -c_r \\
    &= H^\ast_\infty(i,0)-H^\ast_\infty(i,1)-c_r
    \geq c^\ast_r-c_r = \Delta_c,\quad i\leq \ell.
   \end{align*}

   Take $k:=\tilde{k}_{2\Delta_c/3}$ from Equation~(6.2) of \cite{Koopmans}. Then, by Theorem~6.2 of \cite{Koopmans} $$\sum_{i=0}^k \pi_\mu(i)(H^\ast_\infty(i,0)-H^\ast_\infty(i,1)) \geq c^\ast_r-\frac{\Delta_c}{3}.$$
   For states $\ell<i\leq m$, this implies
   \begin{align} \label{eq:mufmu2p1}
   \begin{split}
    &\lim_{\alpha \downarrow 0} \Big(   V^{\phi_0}_{\alpha}(i) -  V^{\phi^1_{0,T_m,\ell}}_{\alpha}(i)\Big) =  \lim_{\alpha \downarrow 0} e^{-\alpha T_m}\sum_{j=0}^\infty P_{ij}(T_m)  \Big(V^{\ast}_{\alpha}(j,0) -  V^\ast_{\alpha}(j,1)\Big) -c_r \\
    &\quad\geq \lim_{\alpha \downarrow 0} \sum_{j=0}^k P_{ij}(T_m)  \Big(V^{\ast}_{\alpha}(j,0) -  V^\ast_{\alpha}(j,1)\Big) -c_r = \sum_{j=0}^k P_{ij}(T_m)  \Big(H^\ast_\infty(j,0) -  H^\ast_\infty(j,1)\Big) -c_r \\
   &\quad\geq \sum_{j=0}^k \pi_\mu(j)  \Big(H^\ast_\infty(j,0) -  H^\ast_\infty(j,1)\Big)- \sum_{j=0}^k \Big |P_{ij}(T_m)-\pi_\mu(j)\Big |  \Big(H^\ast_\infty(j,0) -  H^\ast_\infty(j,1)\Big) -c_r \\
  &\quad\geq c^\ast_r- \frac{\Delta_c}{3} -c_r- \sum_{j=0}^k \Big |P_{ij}(T_m)-\pi_\mu(j)\Big |  \Big(H^\ast_\infty(j,0) -  H^\ast_\infty(j,1)\Big) \\
  &\quad\geq  \frac{2\Delta_c}{3} - \sum_{j=0}^k \Big |P_{ij}(T_m)-\pi_\mu(j)\Big |  \Big(H^\ast_\infty(j,0) -  H^\ast_\infty(j,1)\Big),
     \end{split}
   \end{align}
where we used non-negativity of $H^\ast_{\infty}(j,0)-H_\infty^\ast(j,1)$ (cf. Proposition~\ref{MonotDiff}) in the third line.
   
   Next, we bound the sum $\sum_{j=0}^k \Big |P_{ij}(T_m)-\pi_\mu(j)\Big |  \Big(H^\ast_\infty(j,0) -  H^\ast_\infty(j,1)\Big)$.
   By Proposition~\ref{MonotDiff} \\
   $0\leq H^\ast_\infty(i,0) -  H^\ast_\infty(i,1)\leq H^\ast_\infty(0,0) -  H^\ast_\infty(0,1)$, for $i\leq k$. Using that $U_0$ is an upper bound of $H^\ast_\infty(0,0) -  H^\ast_\infty(0,1)$
   and plugging in Proposition~\ref{TVexpconverg} and our choice of $T_m$ gives
   \begin{equation}\label{eq:mufmu2p2}
   \begin{split}
       &\sum_{j=0}^k \Big |P_{ij}(T_m)-\pi_\mu(j)\Big |  \Big(H^\ast_\infty(j,0) -  H^\ast_\infty(j,1)\Big)
       \leq U_0 \sum_{j=0}^k \Big |P_{ij}(T_m)-\pi_\mu(j)\Big |  \\
       &\quad \quad\quad \quad\quad \quad\quad \quad\quad \quad\quad \quad\leq U_0 \sum_{j=0}^\infty \Big |P_{ij}(T_m)-\pi_\mu(j)\Big | \leq 2 U_0(1+\rho_\mu^{-m/2})e^{-(\sqrt{\mu}-\sqrt{\lambda})^2 T_m} \\
       &\quad \quad\quad \quad\quad \quad\quad \quad\quad \quad\quad \quad\leq 2 U_0(1+\rho_\mu^{-m/2})e^{-(\sqrt{\mu}-\sqrt{\lambda})^2 T_m} =\frac{\Delta_c}{3}.
        \end{split}
   \end{equation}

   Combining Equations~(\ref{eq:mufmu2p1}) and (\ref{eq:mufmu2p2}) then gives
   \begin{align*} 
       \lim_{\alpha \downarrow 0} \Big(   V^{\phi_0}_{\alpha}(i) -  V^{\phi^1_{0,T_m,\ell}}_{\alpha}(i)\Big)\geq   \frac{2\Delta_c}{3}-\frac{\Delta_c}{3}=  \frac{\Delta_c}{3}.
   \end{align*}
\end{proof}



Next, we analyse the tail probabilities of the process $X(t)$ under policy $\phi_{0,T_m,\ell}$. For convenience, we track the number of customers $I(t)$ at time $t\geq 0$.
Recall
that control policy  $\tilde{\phi}_c$  uses $\mu_2$ exactly in the states $(i,1) $ with $i > i^\ast$. 

\begin{proposition} \label{boundedtailmu2}
    Let the process 
    under policy $\phi_{0,T_m,\ell}$, start in $(0,1)\in S$.
    For \begin{equation}\label{eq:Adef}
        A=\max\Big\{\rho_2^{-i^\ast},\Big( \frac{1-\rho_1^{i^\ast}}{1-\rho_1}+\frac{\rho_1^{i^\ast}}{1-\rho_2} \Big)^{-1}  \cdot \rho_1^{i^\ast}\cdot\frac{\rho_2^{-i^\ast}}{1-\rho_2}\Big\} 
    \end{equation} 
    it holds that  $\mathbb{P}(I(t)\geq j)\leq A \cdot \rho_2^{j}$\, for  $j\in \mathbb{N}_0$ and $t\geq 0$, independently of $T_m$ and $\ell$.
\end{proposition}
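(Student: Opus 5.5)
The plan is a stochastic-domination argument. We compare $I(t)$ with a one-dimensional birth--death process $Y(t)$ whose tail we can compute explicitly. Let $Y$ have birth rate $\lambda$ in every state and death rate $r(i)=\mu_1$ for $1\leq i\leq i^\ast$, $r(i)=\mu_2$ for $i>i^\ast$, with $Y(0)=0$.

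First we show $I(t)\leq Y(t)$ for all $t$ under a suitable coupling. The key observation is that under $\phi_{0,T_m,\ell}$ the instantaneous service rate in any state with $i\geq 1$ customers is at least $r(i)$, whatever the sensor component. In states $(i,1)$ the rate is $\tilde{\phi}_c(i)$, which equals $\mu_1$ for $i\leq i^\ast$ and $\mu_2$ for $i>i^\ast$. In states $(i,0,t)$ and $(i,0)$ it is $\mu=\mu_2\geq \mu_1$.

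The coupling is built as follows. Both processes use the same Poisson($\lambda$) arrival stream. Departures are generated from a common Poisson($\mu_2$) clock with independent uniform marks $U$. A mark triggers a departure of $I$ if $U\leq (\text{current rate of }I)/\mu_2$, and a departure of $Y$ if $U\leq r(Y)/\mu_2$. The sensor failures and scheduled repairs are driven by their own independent randomness; they only affect which rate $I$ uses.

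With this construction, whenever $I(t)=Y(t)\geq 1$, every departure of $Y$ is also a departure of $I$, because $I$'s rate is at least $r(I)=r(Y)$. Jumps are $\pm1$ and arrivals are common, so the order $I\leq Y$ is preserved from $I(0)=Y(0)=0$. The non-Markovian remaining-repair-time coordinate causes no trouble: conditionally on the whole sensor/repair path, $I$ is a time-inhomogeneous birth--death process whose death rates dominate $r(\cdot)$ pointwise, and the pathwise coupling above works for it.

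Second, we use that $Y$ started from its minimal state $0$ is stochastically increasing in $t$. This is the standard monotonicity of birth--death processes: couple $Y$ from $0$ with a copy started from $Y(s)\geq 0$. Hence $\mathbb{P}(Y(t)\geq j)\leq \pi_Y([j,\infty))$ for every $t\geq 0$, where $\pi_Y$ is the stationary law of $Y$. It exists because $\rho_2=\lambda/\mu_2<1$, even if $\rho_1=\lambda/\mu_1\geq 1$, since only finitely many states $i\leq i^\ast$ use $\mu_1$.

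From detailed balance, $\pi_Y(j)=Z^{-1}\rho_1^j$ for $j\leq i^\ast$ and $\pi_Y(j)=Z^{-1}\rho_1^{i^\ast}\rho_2^{j-i^\ast}$ for $j\geq i^\ast$. The normalising constant is
$$Z=\frac{1-\rho_1^{i^\ast}}{1-\rho_1}+\frac{\rho_1^{i^\ast}}{1-\rho_2},$$
interpreted as $i^\ast+\rho_1^{i^\ast}/(1-\rho_2)$ when $\rho_1=1$.

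Third, we read off the tail bound in two cases.
\begin{itemize}
\item For $j\geq i^\ast$, the tail is $\pi_Y([j,\infty))=Z^{-1}\rho_1^{i^\ast}\rho_2^{-i^\ast}\rho_2^{j}/(1-\rho_2)$. This is exactly the second term in (\ref{eq:Adef}) times $\rho_2^j$.
\item For $j<i^\ast$, we use the trivial bound $\mathbb{P}(I(t)\geq j)\leq 1\leq \rho_2^{-i^\ast}\rho_2^{j}$, valid since $\rho_2<1$. This gives the first term in (\ref{eq:Adef}).
\end{itemize}
Taking the maximum yields $\mathbb{P}(I(t)\geq j)\leq A\rho_2^j$. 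The bound never used $T_m$ or $\ell$, which gives the claimed independence.

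The main obstacle is making the coupling in the first step rigorous, because the state of the actual process includes the continuous repair timer. I would handle this by conditioning on the sensor/repair path, as described there. If that becomes awkward, the fallback is to uniformise at rate $\lambda+\mu_2$ and argue inductively over jump epochs.
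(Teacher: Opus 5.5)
Your proposal is correct and follows essentially the same route as the paper. You dominate $I(t)$ by the birth--death process with the rates of $\tilde{\phi}_c$, use that this process started from $0$ is stochastically below its stationary law $\tilde{\pi}$ (the paper cites Keilson and Kester where you give the coupling argument), and then split into $j\geq i^\ast$ and $j<i^\ast$ exactly as the paper does.

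One minor caution: drop the remark about conditioning on the sensor/repair path, because the repair delays depend on $I$ at failure times. The pathwise thinning coupling you describe first already suffices, since the rate used by $I$ is always at least $r(I)$.
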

\begin{proof}
At time $t=0$ all bounds hold. Furthermore, we note that the value of $I(t)$ is stochastically smaller than the number of customers $\tilde{I}(t)$ of the (birth-death)  process that always uses the service rate specified by $\tilde{\phi}_c$  (as this replaces the service rate $\mu=\mu_2$ in underlying states $(i,0,t)$ by 
$\mu_{\tilde{\phi}_c(i)} \leq \mu$).
By Keilson and Kester~\cite{KEILSON1977231}, 
$\tilde{I}(t)$ is stochastically smaller than its limit $\lim_{u\rightarrow \infty}\tilde{I}(u)$ with stationary distribution $\tilde{\pi}$, where 
\begin{equation}\label{eq:pitilde}
   \tilde{\pi}(i)=\begin{cases}
 \Big( \frac{1-\rho_1^{i^\ast}}{1-\rho_1}+\frac{\rho_1^{i^\ast}}{1-\rho_2} \Big)^{-1}  \cdot \rho_1^{i} \ \ \ \text{ for } i\leq i^\ast,\\
  \Big( \frac{1-\rho_1^{i^\ast}}{1-\rho_1}+\frac{\rho_1^{i^\ast}}{1-\rho_2} \Big)^{-1}  \cdot \rho_1^{i^\ast}\cdot \rho_2^{i-i^\ast} \ \ \ \text{ for } i > i^\ast.
\end{cases} 
\end{equation}

Consequently, 
\begin{align*}
    \mathbb{P}(I(t)\geq j) &\leq \mathbb{P}(\tilde{I}(t)\geq j) \leq \sum_{i=j}^{\infty} \tilde{\pi}(i)=\Big( \frac{1-\rho_1^{i^\ast}}{1-\rho_1}+\frac{\rho_1^{i^\ast}}{1-\rho_2} \Big)^{-1}  \cdot \rho_1^{i^\ast}\cdot \sum_{i=j}^\infty\rho_2^{i-i^\ast}\\
    &=\Big( \frac{1-\rho_1^{i^\ast}}{1-\rho_1}+\frac{\rho_1^{i^\ast}}{1-\rho_2} \Big)^{-1}  \cdot \rho_1^{i^\ast}\cdot\frac{\rho_2^{j-i^\ast}}{1-\rho_2}=\rho_2^j\cdot\Big( \frac{1-\rho_1^{i^\ast}}{1-\rho_1}+\frac{\rho_1^{i^\ast}}{1-\rho_2} \Big)^{-1}  \cdot \rho_1^{i^\ast}\cdot\frac{\rho_2^{-i^\ast}}{1-\rho_2},
\end{align*}
for $j\geq i^\ast$.

For $j<i^\ast$, we see that trivially $\mathbb{P}(I(t)\geq j) \leq 1\leq \rho_2^{j-i^\ast} = \rho_2^j\cdot \rho_2^{-i^\ast} $.

Combining these, gives that $$\mathbb{P}(I(t)\geq j) \leq \rho_2^j\cdot \max\Big\{\rho_2^{-i^\ast},\Big( \frac{1-\rho_1^{i^\ast}}{1-\rho_1}+\frac{\rho_1^{i^\ast}}{1-\rho_2} \Big)^{-1}  \cdot \rho_1^{i^\ast}\cdot\frac{\rho_2^{-i^\ast}}{1-\rho_2}\Big\}, \quad j\in \mathbb{N}_0.$$ 

\end{proof}

Conditioning on being under control increases the above tail bound by at most an exponential factor, because control periods become less likely as $T_m$ increases. To analyse this, we also need the track whether the process is in control or not at time $t\geq 0$, denoted by $S(t)=1$ and $S(t)=0$, respectively.

\begin{proposition} \label{propCor}
   Let the process 
    under policy $\phi_{0,T_m,\ell}$, start in $(0,1)\in S$.
    It holds that  
    $$\mathbb{P}(I(t)\geq j\ | \ S(t)=1)\leq A_{T_m} \cdot \rho_2^{j}
    $$ 
    for all queue lengths 
    $j\in \mathbb{N}_0$ with $j\geq \ell$ and any time $t\geq 0$ for $A_{T_m}=Ae^{\beta \cdot T_m}$,
    with $A$  defined in Equation~(\ref{eq:Adef}). 
\end{proposition}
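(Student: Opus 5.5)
The plan is to write $\mathbb{P}(I(t)\geq j \mid S(t)=1)=\mathbb{P}(I(t)\geq j,\ S(t)=1)/\mathbb{P}(S(t)=1)$. The numerator is at most $\mathbb{P}(I(t)\geq j)\leq A\rho_2^j$ by Proposition~\ref{boundedtailmu2}. The whole statement then reduces to the uniform lower bound
\begin{equation*}
\mathbb{P}(S(t)=1)\geq e^{-\beta T_m}, \qquad t\geq 0,
\end{equation*}
since this yields $\mathbb{P}(I(t)\geq j\mid S(t)=1)\leq A e^{\beta T_m}\rho_2^j=A_{T_m}\rho_2^j$. The restriction $j\geq \ell$ is not needed in this route, which only helps. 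If the lower bound turns out to be weaker, a refinement is to bound the numerator more carefully by $\mathbb{P}(I(t)\geq j, S(t)=1)$ directly.

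For the lower bound on $\mathbb{P}(S(t)=1)$, I would use the structure of $\phi_{0,T_m,\ell}$. Every period without sensor lasts at most $T_m$ (it is $0$ if the breakdown happens in a state $i\leq \ell$, and exactly $T_m$ otherwise). Every control period is exp$(\beta)$ and independent of the past. Fix $t$ and look at the last time before $t$ at which the system was, or could have been, entering control.

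\textbf{Case $t<\sigma_1$.} Here the process has been in control since time $0$, and $\mathbb{P}(\sigma_1>t)=e^{-\beta t}$, which only helps for small $t$.

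\textbf{General case.} The cleaner argument is to consider the interval $[t-T_m,t]$ when $t\geq T_m$. If at time $t-T_m$ the sensor is off, a repair occurs at some time $r\in[t-T_m,t]$, because off-periods last at most $T_m$. By the memoryless property and the strong Markov property at $r$, the subsequent control period exceeds $t-r\leq T_m$ with probability at least $e^{-\beta T_m}$. If at time $t-T_m$ the sensor is on, then by memorylessness no breakdown occurs in $[t-T_m,t]$ with probability $e^{-\beta T_m}$. In both cases $\mathbb{P}(S(t)=1\mid \mathcal{F}_{t-T_m})\geq e^{-\beta T_m}$. For $t<T_m$, starting in $(0,1)$ gives $\mathbb{P}(S(t)=1)\geq e^{-\beta t}\geq e^{-\beta T_m}$. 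Taking expectations gives the bound.

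The main obstacle is making the conditioning rigorous. The repair time $r$ is a stopping time but depends on the history, and the residual control duration after $r$ must be independent of $\mathcal{F}_r$; exponential lifetimes and the strong Markov property of the piecewise-deterministic process $X$ handle this. A second subtlety is that $S(t)=1$ correlates with $I(t)$. The ratio argument deliberately sidesteps this correlation by dropping the event $S(t)=1$ in the numerator, so no independence between queue length and sensor state is needed.
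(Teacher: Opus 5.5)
Your proposal is correct and follows essentially the same route as the paper. Both write the conditional probability as a ratio, bound the numerator by Proposition~\ref{boundedtailmu2}, and bound the denominator by $\mathbb{P}(S(t)=1)\geq e^{-\beta T_m}$. The paper justifies that lower bound with the same observation you use: the absence of any loss of control during $[(t-T_m)^+,t]$ forces $S(t)=1$, and your case split on the sensor state at time $t-T_m$ simply spells this out.
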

\begin{proof}
In order to prove this proposition, we will combine a bound on $\mathbb{P}(S(t)=1)$ over all $t\in \mathbb{R}_{\geq 0}$, with the tail bound found in Proposition~\ref{boundedtailmu2}. The latter bound is not dependent on the choices of $\ell,m$ and corresponding $T_m$.

For any $t\in \mathbb{R}_{\geq 0}$ such that there has not been a loss of control since time $(t-T_m)^+$, it holds that $S(t)=1$. The former happens with a probability of at least $e^{-\beta T_m}$. As such, we see that $\mathbb{P}(S(t)=1) \geq e^{-\beta T_m}$. 

 For the conditional tail bound, we note that  \begin{equation*}
     \mathbb{P}(I(t)\geq j\ | \ S(t)=1)=\frac{\mathbb{P}(I(t)\geq j  , S(t)=1)}{\mathbb{P}(S(t)=1)}\leq \frac{ \mathbb{P}(I(t)\geq j )}{e^{-\beta \cdot T_m}}.
 \end{equation*}
The geometric tail bound follows directly from Proposition~\ref{boundedtailmu2}.
\end{proof}

Also for policies $\phi_{0,T_m,\ell}$, the process  regenerates at return times to state $(0,1)$. Consequently, we can again apply the Tauberian theorem through a renewal argument.

\begin{restatable}{proposition}{RenRew2}\label{renrewmu2}
For any value of  $ T_m\in \mathbb{R}$, the (non-discounted) process under policy $\phi_{0,T_m,\ell}$ induces a renewal reward process defined  by random variables $S_i,W_i$, $i\in \mathbb{N}_{\geq 1}$, that denote the time and costs between the $(i-1)$-th and $i$-th entrance in state $(0,1)$, respectively. 
For this renewal reward process both $\mathbb{E}[S_1]<\infty$ and $\mathbb{E}[|W_1|]<\infty$.
\end{restatable}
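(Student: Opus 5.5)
I will mirror the proof of Proposition~\ref{renrewmu1}, adapted to the case $\mu=\mu_2$. The renewal structure itself is immediate. The policy $\phi_{0,T_m,\ell}$ is stationary and Markovian on the augmented state space, and $(0,1)$ is a fixed state. By the strong Markov property, the pairs $(S_i,W_i)$ are therefore i.i.d. It then remains to bound $\mathbb{E}[S_1]$ and $\mathbb{E}[|W_1|]$.

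For $\mathbb{E}[S_1]$ I will decompose the time between visits to $(0,1)$ into busy periods and idle periods. The key observation is that in every state the service rate in use is at least the one prescribed by $\tilde{\phi}_c$. During control it equals $\tilde{\phi}_c(i)$, and during vacation it is $\mu_2\geq\tilde{\phi}_c(i)$. Hence the queue length is stochastically dominated by the birth--death process $\tilde I(t)$ from Proposition~\ref{boundedtailmu2}.

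Note that $\mu_1$ may be at most $\lambda$, so I cannot bound the busy period by that of an $M/M/1$ queue with rate $\mu_1$. Instead I will use the dominating process $\tilde I$. It is positive recurrent, since its stationary distribution $\tilde\pi$ in Equation~(\ref{eq:pitilde}) is a proper distribution because $\rho_2<1$. Its busy period therefore has finite mean, namely $1/(\lambda\tilde\pi(0))$ minus the mean idle time $1/\lambda$. Coupling gives that each busy period of $I$ is bounded by a busy period of $\tilde I$, so it has mean at most $b:=(1-\tilde\pi(0))/(\lambda\tilde\pi(0))<\infty$.

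At the end of a busy period the process is in $(0,1)$ or in some $(0,0,t)$. The case $(0,0)$ cannot occur, since $T_m<\infty$. In a state $(0,0,t)$ with $t\leq T_m$, the state $(0,1)$ is reached before the next arrival with probability at least $e^{-\lambda T_m}$. Note also that when control is lost at queue length $0\leq\ell$, repair is immediate, so we return to $(0,1)$ directly. A geometric-trials argument then gives
\[
\mathbb{E}[S_1]\leq \frac{1}{\lambda+\beta}+e^{\lambda T_m}\Big(T_m+\frac1\lambda+b\Big)<\infty.
\]

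For $\mathbb{E}[|W_1|]$ the costs are nonnegative, so $|W_1|=W_1$. I will split $W_1$ into three parts.

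\emph{Service costs.} These are at most $c_{\mu_2}\mathbb{E}[S_1]$.

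\emph{Repair costs.} These are at most $c_r$ times the expected number of sensor failures in the cycle. Failures occur at rate at most $\beta$, so by Wald's identity (or a compensator argument) this is at most $c_r\beta\,\mathbb{E}[S_1]$.

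\emph{Holding costs.} By the dominance, the holding cost during a busy period of $I$ is at most that of the corresponding busy period of $\tilde I$. This uses that $h$ is non-decreasing and that $h(0)=0$. By the renewal reward theorem applied to $\tilde I$, the expected holding cost per cycle of $\tilde I$ equals
\[
\Big(\sum_i\tilde\pi(i)h(i)\Big)\cdot\frac{1}{\lambda\tilde\pi(0)}.
\]
This is finite by Assumption~\ref{holdingassump}, since $\tilde\pi$ has a geometric tail with ratio $\rho_2<1$. Multiplying by the expected number of busy periods, at most $e^{\lambda T_m}$, gives a finite bound.

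The main obstacle is making the pathwise coupling rigorous. The busy periods of $I$ must be dominated by busy periods of $\tilde I$ with the right independence, so that the product of the expected count and the per-period expected cost is justified. I would handle this with a uniformised coupling on common Poisson clocks. Since successive busy periods start from queue length $1$ and the dominating bound does not depend on the sensor state, the bound per busy period is uniform. The count and the costs can then be combined by Wald's identity, exactly as in Proposition~\ref{renrewmu1}.
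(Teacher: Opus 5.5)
Your proposal is correct and follows essentially the same route as the paper. Both arguments dominate the busy-period lengths and holding costs of $I$ by those of the birth--death process $\tilde I$, bound the number of busy periods per cycle by a geometric variable with mean $e^{\lambda T_m}$, and bound the service costs by $c_{\mu_2}\mathbb{E}[S_1]$. Your version is slightly more complete: you account for the repair costs $c_r$, which the paper's bound on $\mathbb{E}[|W_1|]$ omits, and you justify the ``expected count times per-period expected cost'' step via a coupling and strong-Markov/Wald argument, which the paper only asserts.
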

\begin{proof}
 Clearly, this is clearly a renewal reward process.  
 We want to find an upper bound of the expected time of the process under policy $\phi_{0,T_m,\ell}$ before entering state $(0,1)$, starting in state $(0,1)$ given by $\mathbb{E}[S_1]$. We do this by considering the busy periods again. 
 
   The first jump, starting in state $(0,1)$, occurs after an expected time of $1/(\lambda+\beta)$.  If it is a sensor breakdown, we jump directly to state $(0,1)$ (due to the instant repair). When it is an arrival, the first busy period starts.
   
   We note that in states with customer amounts that are larger than $i^\ast$, we always use the service rate $\mu_2$. For states with a customer amount less than, or equal to $i^\ast$, we either use service rate $\mu_1$ or $\mu_2$ depending on whether we have control. Therefore, the expected length of a busy period is at most the expected busy period length of the birth-death process $\tilde{I}(t)$ from the previous two propositions.
   
   
   We note that the expected duration of this process in state $0$ is given by $1/\lambda$.
   Consequently, $$\tilde{\pi}(0)=\frac{1/\lambda}{\mathbb{E}[BP]+1/\lambda} \implies \mathbb{E}[BP]=\Big( \frac{1-\rho_1^{i^\ast}}{1-\rho_1}+\frac{\rho_1^{i^\ast}}{1-\rho_2} \Big)\Big/\lambda-\frac{1}{\lambda}. $$  
   
   Now, we consider the moment that a busy period ends in our original process under policy $\phi_{0,T_m,\ell}$. Either we are in state $(0,1)$ or in a state $(0,0,t)$ with $t\leq T_m$. This means that with a probability of at least $e^{-\lambda T_m}$, the next jump is to state $(0,1)$ after $t\leq T_m$ time and otherwise the next busy period commences within $T_m$ time. The expected amount of busy periods needed before reaching state $(0,1)$ is bounded from above by the expectation of a geometrically distributed random variable with success probability $e^{-\lambda T_m}$. This expectation is given by $e^{\lambda T_m}$. 
   Therefore,
   $$\mathbb{E}[S_1] \leq \frac{1}{\lambda+\beta} + e^{\lambda T_m} \Big(T_m+\Big( \frac{1-\rho_1^{i^\ast}}{1-\rho_1}+\frac{\rho_1^{i^\ast}}{1-\rho_2} \Big)\Big/\lambda-\frac{1}{\lambda}\Big)<\infty.$$
   
   To bound $\mathbb{E}[|W_1|]$ from above, we consider the costs for service rate $\mu_2$ and the holding costs separately.
   The expected costs for service rate $\mu_2$ per renewal period are bounded from above by $c_{\mu_2}\cdot \mathbb{E}[S_1] $. 
   
   We note that the holding costs are zero when not in a busy period. As such, we consider the holding costs per busy period.
As the holding cost is a non-decreasing function,   the expectation of the holding costs in a busy period is at most the expectation of the holding costs in a busy period of the process $\tilde{I}(t)$.
   
   This birth-death process has a finite average expected cost as follows from Assumption~\ref{holdingassump} and it is given by  $\tilde{g}_h=\sum_{i=0}^\infty \tilde{\pi}(i)\cdot h(i) <\infty$. 
   The expected duration per renewal $\mathbb{E}[\tilde{S}]$ of the birth-death process is given by the expected time until the first arrival when the busy period starts, and the expected time of that busy period $\mathbb{E}[BP]$. This is $$\mathbb{E}[\tilde{S}]= \frac{1}{\lambda}+ \Big( \frac{1-\rho_1^{i^\ast}}{1-\rho_1}+\frac{\rho_1^{i^\ast}}{1-\rho_2} \Big)\Big/\lambda-\frac{1}{\lambda}=\Big( \frac{1-\rho_1^{i^\ast}}{1-\rho_1}+\frac{\rho_1^{i^\ast}}{1-\rho_2} \Big)\Big/\lambda.$$ 
   We can use the renewal theorem to find the expected holding costs per period, which is equal to 
   $$\tilde{g}_h \cdot \Big( \frac{1-\rho_1^{i^\ast}}{1-\rho_1}+\frac{\rho_1^{i^\ast}}{1-\rho_2} \Big)\Big/\lambda.$$ Next, we consider that the expected amount of busy periods before reaching $(0,1)$ in the process under $\phi_{0,T_m,\ell}$ is at most $e^{\lambda T_m}$ and we can bound  $\mathbb{E}[|W_1|]$ by 
   $$\mathbb{E}[|W_1|]\leq e^{\lambda T_m}\cdot \tilde{g}_h\cdot  \Big( \frac{1-\rho_1^{i^\ast}}{1-\rho_1}+\frac{\rho_1^{i^\ast}}{1-\rho_2} \Big)\Big/\lambda<\infty.$$
  This concludes the proof.
  
\end{proof}

Now we have all the ingredients to show that there is an average expected cost reduction under policy $\phi_{0,T_m,\ell}$ w.r.t. policy $\phi_0$. 

\begin{restatable}{theorem}{TMu2avg}\label{mufismu2avg}
Let $$m= \left\lceil\max\left\{2\log_{\rho_2}(\Delta_c/(12A'c_r)),2\log_{\rho_2}(1/(3A'))\right\}\right\rceil,$$
where $A'= 18e^{\beta/(\sqrt{\mu_2}-\sqrt{\lambda})^2}  U_0A/\Delta_c $ with $A$ given by Equation~(\ref{eq:Adef}). 
    Then,
    $$g^{\phi_{0,T_m,\ell}}_{(0,1)}\leq g_\mu-\frac{\Delta_c}{12(1/\beta+ T_m)},$$
     with $T_m$ given in Equation~(\ref{eq:Tm}).
\end{restatable}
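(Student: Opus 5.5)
We mirror the proof of Theorem~\ref{mufismu1avg}: a telescoping sum over the history dependent policies $\{\phi^j_{0,T_m,\ell}\}_{j\in\mathbb{N}}$ under a discount $\alpha>0$, followed by the vanishing discount limit. Corollary~\ref{taubercor} transfers to $\phi_{0,T_m,\ell}$ via Proposition~\ref{renrewmu2} and Theorem~\ref{Thm:Tauber}. The new difficulty is that Theorem~\ref{mufismu2} only guarantees a single-repair gain of $\Delta_c/3$ for starting queue lengths $i\leq m$; for $i>m$ we only have the trivial bound $V^{\phi_0}_\alpha(i)-V^{\phi^1_{0,T_m,\ell}}_\alpha(i)\geq -c_r$, since the deferred control period has non-negative value by Proposition~\ref{MonotDiff}.

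\textbf{Step 1 (per-repair gain for small queues).} Fix $\alpha'>0$ such that the single-repair gain is at least $\Delta_c/4$ for all $i\leq m$ and $0<\alpha\leq\alpha'$. This is possible because only finitely many states are involved.

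\textbf{Step 2 (telescoping).} As in Equation~(\ref{eq:tele1}), write
\[
V^{\phi_0}_\alpha(i)-V^{\phi_{0,T_m,\ell}}_\alpha(i)=\sum_{j\geq0}\mathbb{E}\Big[e^{-\alpha\tau_j}\sum_{n}\pi^{i,j}_{\tau_j}(n)\big(V^{\phi_0}_\alpha(n)-V^{\phi^1_{0,T_m,\ell}}_\alpha(n)\big)\Big].
\]
Split each inner sum at $m$ to get the lower bound
\[
\sum_j\mathbb{E}\Big[e^{-\alpha\tau_j}\Big(\tfrac{\Delta_c}{4}-\big(\tfrac{\Delta_c}{4}+c_r\big)\,\mathbb{P}(I(\tau_j)>m\mid\tau_j)\Big)\Big].
\]

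\textbf{Step 3 (tail at loss-of-control epochs).} Bound the tail probabilities at the epochs $\tau_j$. At such an epoch the process was under control just before, so Proposition~\ref{propCor} applies. Loss of control happens at rate $\beta$ independently of the queue, so the loss epochs sample the controlled state in a Palm sense. From this we obtain $\mathbb{P}(I(\tau_j)>m)\leq A_{T_m}\rho_2^{m}$. Proposition~\ref{propCor} requires $m\geq\ell$, which holds for large $m$.

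Next substitute $A_{T_m}=Ae^{\beta T_m}$ and the definition (\ref{eq:Tm}) of $T_m$. The factor $e^{\beta T_m}$ equals $\big(6U_0(1+\rho_2^{-m/2})/\Delta_c\big)^{\beta/(\sqrt{\mu_2}-\sqrt\lambda)^2}$. This is where I expect the main obstacle: for the bound to close, the growth $\rho_2^{-m/2}$ must be dominated by $\rho_2^{m}$. The constant $A'=18e^{\beta/(\sqrt{\mu_2}-\sqrt\lambda)^2}U_0A/\Delta_c$ suggests the authors bound this factor by something like $A'\rho_2^{-m/2}$ (using $1+\rho_2^{-m/2}\leq 2\rho_2^{-m/2}$), giving a total tail of order $A'\rho_2^{m/2}$. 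I would try to make this step rigorous first, possibly by restricting to exponent regimes where it holds, or by tracking it carefully. The two terms in the definition of $m$ then yield $A'\rho_2^{m/2}\leq \Delta_c/(12c_r)$ and $A'\rho_2^{m/2}\leq 1/3$. Hence
\[
\big(\tfrac{\Delta_c}{4}+c_r\big)\,\mathbb{P}(I(\tau_j)>m)\leq \tfrac{\Delta_c}{12}+\tfrac{\Delta_c}{12}=\tfrac{\Delta_c}{6},
\]
so each term of the sum is at least $\tfrac{\Delta_c}{12}\,\mathbb{E}[e^{-\alpha\tau_j}]$.

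\textbf{Step 4 (vanishing discount).} Bound $\tau_j\leq jT_m+\sum_{n=1}^j\sigma_n$, as in the proof of Theorem~\ref{mufismu1avg}. Summing the geometric series gives $\sum_j\mathbb{E}[e^{-\alpha\tau_j}]\geq\big(1-e^{-\alpha T_m}\beta/(\beta+\alpha)\big)^{-1}$. Multiplying by $\alpha$ and letting $\alpha\downarrow0$ yields $\beta/(\beta T_m+1)$. Combined with $\lim_{\alpha\downarrow0}\alpha V^{\phi_0}_\alpha=g_\mu$ and the analogue of Corollary~\ref{taubercor}, this gives
\[
g^{\phi_{0,T_m,\ell}}_{(0,1)}\leq g_\mu-\frac{\Delta_c}{12(1/\beta+T_m)}.
\]
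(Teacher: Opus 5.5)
Your Step~3 has a genuine gap, and it is the point the paper spends most of its proof on. Proposition~\ref{propCor} bounds $\mathbb{P}(I(t)\ge m\mid S(t)=1)$ only at deterministic times $t$. Your Palm observation (losses occur at rate $\beta\mathbf{1}\{S(t)=1\}$, independently of the queue) turns this into a statement about all loss epochs together,
\[
\sum_{j\ge 1}\mathbb{E}\big[e^{-\alpha\tau_j}\mathbf{1}\{I(\tau_j)>m\}\big]=\beta\int_0^\infty e^{-\alpha t}\,\mathbb{P}(I(t)>m,\,S(t)=1)\,dt,
\]
but not about a single $\tau_j$. For fixed $j$, $\tau_j$ has density $\beta\,\mathbb{P}(U(t)=j,S(t)=1)$, and given $\tau_j=t$, $I(\tau_j)$ has law $\mathbb{P}(I(t)\in\cdot\mid U(t)=j,S(t)=1)$. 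The extra conditioning on $U(t)=j$ is not covered by Proposition~\ref{propCor}: its proof divides by $\mathbb{P}(S(t)=1)\ge e^{-\beta T_m}$, whereas $\mathbb{P}(U(t)=j,S(t)=1)$ can be arbitrarily small. Moreover, even a valid unconditional bound on $\mathbb{P}(I(\tau_j)>m)$ would not give your per-term bound $\tfrac{\Delta_c}{12}\mathbb{E}[e^{-\alpha\tau_j}]$. The reason is that $I(\tau_j)$ and $\tau_j$ are dependent ($\tau_j$ contains a $T_m$-delay exactly when an earlier loss found more than $\ell$ customers), so $e^{-\alpha\tau_j}$ cannot be pulled out.

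The paper avoids both problems by not bounding term by term. It derives the density of $\tau_j$ and the conditional law of $I(\tau_j)$, then sums over $j$ inside the time integral to get $\beta\int_0^\infty e^{-\alpha t}\big((\tfrac{\Delta_c}{4}+c_r)\mathbb{P}(I(t)\le m\mid S(t)=1)-c_r\big)\mathbb{P}(S(t)=1)\,dt$. Only then does it apply Proposition~\ref{propCor} pointwise in $t$, and it concludes with Theorem~\ref{Thm:Tauber} applied to $\mathbb{P}(S(t)=1)$ (long-run control fraction at least $\frac{1/\beta}{1/\beta+T_m}$). Since $\beta\int_0^\infty e^{-\alpha t}\mathbb{P}(S(t)=1)\,dt=\sum_{j\ge1}\mathbb{E}[e^{-\alpha\tau_j}]$, your Step~4 could replace that last step once the sum has been reorganised this way.

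Your worry about the exponent is well founded, and the paper does not really overcome it either. With $\kappa=\beta/(\sqrt{\mu_2}-\sqrt{\lambda})^2$ one has $e^{\beta T_m}=\big(6U_0(1+\rho_2^{-m/2})/\Delta_c\big)^{\kappa}$, whereas the paper's proof replaces this by $e^{\kappa}\cdot 6U_0(1+\rho_2^{-m/2})/\Delta_c$. That is an upper bound only when $\kappa\le 1$. For $\kappa\ge 2$ the quantity $Ae^{\beta T_m}\rho_2^{m}$ does not even tend to $0$ as $m\to\infty$ (Case~3 of Table~\ref{tablecases} has $\kappa\approx 29$).

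One way to close it, once the sum has been turned into the $t$-integral, is to avoid conditioning on $S(t)=1$ altogether:
\begin{itemize}
\item bound $\mathbb{P}(I(t)>m,S(t)=1)\le\mathbb{P}(I(t)>m)\le A\rho_2^{m+1}$ by Proposition~\ref{boundedtailmu2};
\item compare this with the Abel limit of $\mathbb{P}(S(t)=1)$, which is at least $1/(1+\beta T_m)$.
\end{itemize}
The price is then the polynomial factor $1+\beta T_m$ rather than $e^{\beta T_m}$. A short computation uses $1+\rho_2^{-m/2}\le 2\rho_2^{-m/2}$, $\rho_2^{m/2}\log\rho_2^{-m/2}\le 1/e$, $e^{\kappa}\ge 1+\kappa$ and $U_0\ge\Delta_c$. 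It shows that the stated $m$ gives $(\tfrac{\Delta_c}{4}+c_r)A\rho_2^{m}(1+\beta T_m)\le\tfrac{\Delta_c}{6}$, which yields the claimed bound $\frac{\Delta_c}{12(1/\beta+T_m)}$ after $\alpha\downarrow 0$.
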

\begin{proof}
    We start with an analogous line of reasoning as in the proof of Theorem~\ref{mufismu1avg} with a sufficient choice of $\alpha'$ and considering $0<\alpha\leq \alpha'$. We can directly adopt the first half of Equation~(\ref{eq:tele1}) (with $i=0$) for the sequence  $\{\phi^k_{0,T_m,\ell}\}_{k\in \mathbb{N}}$, such that 

\begin{equation}
\begin{split}\label{eq:tele1mu2}
         V^{\phi_0}_{\alpha}(0) -  V^{\phi_{0,T_m,\ell}}_{\alpha}(0)
          &= \sum_{j=0}^{\infty} \Big(  \mathbb{E}\Big[  \sum_{i=0}^\infty e^{-\alpha \tau_{j}}\cdot \pi^{0,j}_{\tau_{j}}(i)\Big(  V^{\phi_0}_{\alpha}(i) -  V^{\phi^1_{0,T_m,\ell}}_{\alpha}(i)\Big)   \Big] \Big)\\ 
          &\geq \sum_{j=0}^{\infty} \Big(  \mathbb{E}\Big[  \sum_{i=0}^m e^{-\alpha \tau_{j}}\cdot \pi^{0,j}_{\tau_{j}}(i)\Big(  V^{\phi_0}_{\alpha}(i) -  V^{\phi^1_{0,T_m,\ell}}_{\alpha}(i)\Big)   \Big] \\
          &\hspace{5.3cm}- c_r\cdot \mathbb{E}\Big[ \sum_{i=m+1}^\infty  e^{-\alpha \tau_{j}}\cdot \pi^{0,j}_{\tau_{j}}(i) \Big] \Big) \\
          &\geq \sum_{j=0}^{\infty} \Big(  \frac{\Delta_c}{4}\cdot \mathbb{E}\Big[  e^{-\alpha \tau_{j}}\cdot \sum_{i=0}^m \pi^{0,j}_{\tau_{j}}(i)  \Big] - c_r \cdot \mathbb{E}\Big[  e^{-\alpha \tau_{j}}\cdot\sum_{i=m+1}^\infty  \pi^{0,j}_{\tau_{j}}(i) \Big] \Big)\\
           &= \sum_{j=0}^{\infty} \Big(  \mathbb{E}\Big[  e^{-\alpha \tau_{j}}\cdot \Big((\frac{\Delta_c}{4}+c_r)\sum_{i=0}^m \pi^{0,j}_{\tau_{j}}(i) -c_r\Big] \Big). 
    \end{split}     
    \end{equation}

    As the number of customers in the system at time $\tau_j$ is not independently distributed with respect to the random variable $\tau_j$ itself,
 it is not directly possible to use Proposition~\ref{boundedtailmu2}. 

 To solve this, we have to calculate two measures: the distribution of $\tau_j$ and the distribution  of $\pi^{0,j}_{\tau_{i}}$.

Let  
$P_j$ denote the distribution of $\tau_j$. This is a distribution on $[0,\infty)$, noting that the first control period starts at time $0$.
For convenience we, write $F_j(x)=P_{j} [0,x)$, $x\geq 0 $.
Then, similarly to the proof of the Lipschitz continuity of the function $\mathbb{E}[ N(t)]$, it is simply checked that $F$ is Lipschitz continuous.
By Billingsley\cite[Thm 31.8]{billingsley}, $F_j(t)=\int_{0}^x p((s)ds$, for a Lebesgue-integrable function $p:[0,\infty)\to[0,\infty)$.
By Billingsley\cite[Thm 31.3]{billingsley},  $p=F_j'$ except on a set of Lebesgue measure 0, i.o.w., it is almost surely equal to the right derivative of $F_j$, i.e.
\begin{equation}
\label{maat1}
p(t)\stackrel{a.s.}{=}\lim_{\delta\downarrow0} \frac{P_{j}[t,t+\delta)}{\delta}=\lim_{\delta\downarrow 0}  \frac{\mathbb{P}(\tau_j\in [t,t+\delta ))}{\delta}.
\end{equation}

For $\pi^{0,j}_{\tau_j}(i)$, we take an analogous approach by  considering the distribution function
$$
F(t)=\int_{0}^t\pi^{0,j}_s dP_{j}(s)=\int_{0}^t \mathbb{P}(I(\tau_j)=i\,|\, \tau_j=s) dP_{j}(s)=\int_0^t  \mathbb{P}(I(\tau_j)=i\,|\, \tau_j=s) p(s)ds.
$$
Then, again by \cite[Thm 31.3]{billingsley}, $F'(t)= \mathbb{P}(I(\tau_j)=i\,|\, \tau_j=t) p(t)$, except on a set of Lebesgue measure 0.

Notationally, it is convenient to use that the processes are implicitly defined on an underlying space $(\Omega,{\cal F},\mathbb{P})$.
Then, $P_J$ is the induced probability distribution of $\tau_j$. Let $t>0$. We get, 
\begin{equation}\label{maatpitauj}
\begin{split}
\mathbb{P}(I(\tau_j)=i\,|\, \tau_j=t) p(t)&\stackrel{a.s.}{=}\lim_{\delta\downarrow 0}\frac{F(t+\delta)-F(t)}{\delta}=\lim_{\delta\downarrow0}\frac{\int_t^{t+\delta} \mathbb{P}(I(\tau_j)=i\,|\,
\tau_j=s)dP_J(s)}{\delta}\\
&=\lim_{\delta\downarrow 0}\frac{\int_{\tau_j\in[t,t+\delta)}\mathbb{E}({\bf 1}_i(I(\tau_j)\,|\, \tau_j\in[t,t+\delta))d\mathbb{P}}{\delta}\\
&=\lim_{\delta\downarrow 0}\frac{\int_{\tau_j\in [t,t+\delta)}{\bf 1}_i(I(\tau_j))d\mathbb{P}}{\delta}\\
&=\lim_{\delta\downarrow 0}\frac{\int{\bf 1}(\tau_j\in [t,t+\delta))\cdot{\bf 1}_i(I(\tau_j))d\mathbb{P}}{\delta}\\
&=\lim_{\delta\downarrow 0}\frac{\mathbb{P}\big( I(\tau_j)=i, \tau_j\in[t,t+\delta))}{\delta}\\
&=\lim_{\delta\downarrow0}\frac{\mathbb{P}\big( I(\tau_j)=i, \tau_j\in[t,t+\delta))}{\mathbb{P}(\tau_j\in [t,t+\delta))}\cdot \frac{\mathbb{P}(\tau_j\in [t,t+\delta))}{\delta}.
\end{split}
\end{equation}

In order to calculate $p$ and $\pi^{0,j}_{\tau_j}(i)$ a.s., we introduce $U(t)$ to denote the number of control periods that have started at time $t\geq 0$. 
The number of customers in the system and control losses depend on exponential distributions and the probability of at least two exponentially distributed events within a time interval of length $\delta t$ is of order $\mathcal{O}((\delta t)^2)$.
As a consequence,
\begin{equation*}
    \begin{split}
        \mathbb{P}(\tau_j\in [t,t+\delta t))= \mathbb{P}(\tau_j\in (t,t+\delta t),U(t)=j,S(t)=1)+\mathcal{O}((\delta t)^2).
    \end{split}
\end{equation*}
Furthermore,
\begin{equation*}
    \begin{split}
        \mathbb{P}(  I(\tau_j)=i , \tau_j \in [t,t+\delta t))&=\mathbb{P}(  I(t)=i , \tau_j \in [t,t+\delta t))+\mathcal{O}((\delta t)^2)\\
        &= \mathbb{P}(  I(t)=i , U(t)=j,S(t)=1,\tau_j \in (t,t+\delta t) ) +\mathcal{O}((\delta t)^2) .
    \end{split}
\end{equation*}

Thus, we can use Equation~(\ref{maat1}) to derive

   \begin{equation}
       \begin{split}\label{eq:tjdens}
         p(t)\stackrel{a.s.}{=}   \lim_{\delta t \downarrow 0} \frac{\mathbb{P}(\tau_j\in [t,t+\delta t) )}{\delta t}
            &=\lim_{\delta t \downarrow 0}\frac{\mathbb{P}(\tau_j\in (t,t+\delta t) , U(t)=j, S(t)=1 ) }{\delta t}\\
            &
           = \lim_{\delta t \downarrow 0}\frac{\mathbb{P}(\tau_j\in (t,t+\delta t))\mathbb{P}( U(t)=j, S(t)=1 ) }{\delta t} \\
           &= \beta \cdot\mathbb{P}(U(t)=j,S(t)=1), 
       \end{split}
   \end{equation}
   where we used the independence of the events $\{\tau_j\in(t,t+\delta t)\}$ and $\{U(t)=j,S(t)=1\}$.
   Note that $p(t)\stackrel{a.s.}{>}0$ for $t>0$.
Hence, it remains to calculate $\pi_{t}^{0,j}(i)$ for $t>0$ using Equation~(\ref{maatpitauj}).
\begin{equation} \label{eq:pitprob}
\begin{split}
\pi_{t}^{0,j}(i):=\mathbb{P}(  I(\tau_j)=i \ | \ \tau_j =t) &\stackrel{a.s.}{=}\lim_{\delta\downarrow0}\frac{\mathbb{P}\big( I(\tau_j)=i, \tau_j\in[t,t+\delta))}{\mathbb{P}(\tau_j\in [t,t+\delta))}\\
&=\lim_{\delta t \downarrow 0}\frac{\mathbb{P}(  I(t)=i , U(t)=j,S(t)=1,\tau_j \in (t,t+\delta t) )  }{\mathbb{P}( U(t)=j,S(t)=1,  \tau_j \in (t,t+\delta t))}\\
&=    \lim_{\delta t \downarrow 0}\frac{\mathbb{P}(  I(t)=i , U(t)=j, S(t)=1 )  }{\mathbb{P}( U(t)=j,S(t)=1)} 
              \\
              &=  \mathbb{P}(  I(t)=i \ | \  U(t)=j, S(t)=1 ),
\end{split}
\end{equation}

where, in the second last equality, we use that $I(t)$, given that $U(t)=j,S(t)=1$, is independent of the next moment when control is lost.

  Now, we can use Equation~(\ref{eq:pitprob}) and the density of Equation~(\ref{eq:tjdens}) to continue Equation~(\ref{eq:tele1mu2}) as follows.

   \begin{equation*}
    \begin{split}\label{eq:teleinf2}
         &V^{\phi_0}_{\alpha}(0) -  V^{\phi_{T_m,\ell}}_{\alpha}(0)
         \geq \sum_{j=0}^{\infty} \Big(  \mathbb{E}\Big[  e^{-\alpha \tau_{j}}\cdot \Big((\frac{\Delta_c}{4}+c_r)\sum_{i=0}^m \pi^{0,j}_{\tau_{j}}(i) -c_r\Big) \Big]\\
         &\quad \quad= \sum_{j=0}^{\infty} \Big(  \int_0^\infty  e^{-\alpha t} \cdot \beta \cdot  \mathbb{P}(U(t)=j,S(t)=1) \\
         &\hspace{6cm} \cdot  \Big((\frac{\Delta_c}{4}+c_r)\sum_{i=0}^m \mathbb{P}(  I(t)=i \ | \ U(t)=j,  S(t)=1 ) -c_r\Big) dt\Big)\\
         &\quad \quad= \beta  \int_0^\infty  e^{-\alpha t} \cdot \mathbb{P}(S(t)=1)\\
         &\hspace{2.3cm}\cdot \Big((\frac{\Delta_c}{4}+c_r) \sum_{j=0}^{\infty} \mathbb{P}(U(t)=j\ | \ S(t)=1)\mathbb{P}(  I(t)\leq m \ | \ U(t)=j,  S(t)=1 ) -c_r\Big)  dt \\
         &\quad \quad = \beta   \int_0^\infty  e^{-\alpha t}\cdot \Big((\frac{\Delta_c}{4}+c_r) \mathbb{P}(I(t)\leq m \ | \ S(t)=1) -c_r\Big) \cdot \mathbb{P}(S(t)=1) dt. 
    \end{split}     
    \end{equation*}

We use the chosen values of $m$ and $T_m$ with Proposition~\ref{propCor}, such that 
\begin{equation*}\label{eq:mbound0}
    \mathbb{P}(I(t)>m \ | \ S(t)=1)\leq \frac{A}{e^{-\beta \cdot T_m}}\cdot \rho_2^m = e^{\beta/(\sqrt{\mu_2}-\sqrt{\lambda})^2} \cdot (6 U_0(1+\rho_\mu^{-m/2})/\Delta_c) \cdot A \cdot \rho_2^m.
\end{equation*}
This gives 
\begin{equation}\label{eq:mbound1}
    \mathbb{P}(I(t)>m \ | \ S(t)=1)\leq 
  A'\cdot  \rho_2^{m/2} \leq  A'\cdot \frac{\Delta_c}{12A'c_r} = \frac{\Delta_c}{12c_r}.
\end{equation}
but also that
\begin{equation}\label{eq:mbound2}
    \mathbb{P}(I(t)>m \ | \ S(t)=1)\leq 
     A'\cdot  \rho_2^{m/2} \leq   A'\cdot  \frac{1}{3A'} = \frac{1}{3}\implies \mathbb{P}(I(t)\leq m \ | \ S(t)=1)\geq \frac{2}{3}.
\end{equation}
Using Equations~(\ref{eq:mbound1}) and (\ref{eq:mbound2}),
we obtain 
\begin{equation*}
    \begin{split}
        &V^{\phi_0}_{\alpha}(0) -  V^{\phi_{0,T_m,\ell}}_{\alpha}(0)
        \\
        &\hspace{1cm}\geq \beta  \Big(  \int_0^\infty  e^{-\alpha t}\cdot \Big((\frac{\Delta_c}{4}+c_r) \mathbb{P}(I(t)\leq m \ | \ S(t)=1) -c_r\Big) \cdot \mathbb{P}(S(t)=1) dt \Big)\\
          &\hspace{1cm}\geq  \beta\Big(  \int_{t=0}^\infty e^{-\alpha t}  \cdot \mathbb{P}(S(t)=1)\Big(\frac{\Delta_c}{4}\cdot \frac{2}{3} -c_r\cdot  \frac{\Delta_c}{12 c_r}\Big) dt \Big)\\
           &\hspace{1cm}=\frac{\beta \Delta_c}{12}   \int_{t=0}^\infty e^{-\alpha t}\cdot\mathbb{P}(S(t)=1)dt.
    \end{split}
\end{equation*}Thus, we get
\begin{equation*}
    \lim_{\alpha\downarrow 0}\alpha(V^{\phi_0}_{\alpha}(0) -  V^{\phi_{0,T_m,\ell}}_{\alpha}(0))\geq \frac{\beta \Delta_c}{12}\cdot \lim_{\alpha\downarrow 0}\alpha  \int_{t=0}^\infty e^{-\alpha t}\cdot\mathbb{P}(S(t)=1)dt.
\end{equation*}

 The Tauberian theorem for $S(t)$ implies  that $\lim_{\alpha\downarrow 0}\alpha  \int_{t=0}^\infty e^{-\alpha t}\cdot\mathbb{P}(S(t)=1)dt$ is equal to the fraction of time that the system in control. As every period of control of expected length $1/\beta$ is either followed up by 0 or $T_m$ time units,  the fraction of time that  the process is in control is at least $$\frac{\frac{1}{\beta}}{\frac{1}{\beta}+T_m}.$$ 
 Hence,
 \begin{equation*}
 \begin{split}
      \lim_{\alpha\downarrow 0}\alpha(V^{\phi_0}_{\alpha}(0) -  V^{\phi_{0,T_m,\ell}}_{\alpha}(0))&\geq \frac{\beta \Delta_c}{12}\cdot \lim_{\alpha\downarrow 0}\alpha   \int_{t=0}^\infty e^{-\alpha t}\cdot\mathbb{P}(S(t)=1) dt\\
      &\geq \frac{\beta \Delta_c}{12}\cdot \frac{\frac{1}{\beta}}{\frac{1}{\beta}+T_m}= \frac{\Delta_c}{12(\frac{1}{\beta}+T_m)}.
 \end{split}
\end{equation*}

From the Tauberian theorem for the processes using policies $\phi_0$ and $\phi_{0,T_m,\ell}$ we can conclude that
$$g^{\phi_{0,T_m,\ell}}_{(0,1)}\leq g_\mu-\frac{\Delta_c}{12(1/\beta+ T_m)}.$$

\end{proof}

The bound shows that any positive critical gap yields strictly improving policies, although guarantees may be conservative.


\section{Computational results}\label{sect:comp}

In this section, we will consider three cases of parameter combinations and will find improving policies according to the theory supplied in Section~\ref{sect:improv} for both $\mu=\mu_1$ and $\mu=\mu_2$. We vary the costs through different repair costs $c_r$ (close to the critical cost $c^\ast_r$ and less close) and through the choice of linear holding costs of the form $h(i)=K_{lin}\cdot i$ or quadratic  holding costs of the form $h(i)=K_{sqd}\cdot i^2$.  The three different parameter combinations we consider are ordered by traffic intensity and given in Table~\ref{tablecases}.

\begin{table}[H]
\begin{center}
\caption{The three considered combinations of parameters.}
\label{tablecases}
\begin{tabular}{ |p{1.2cm}|p{0.8cm}|p{0.8cm}|p{0.8cm}|p{0.8cm}|p{0.8cm}| }
 \hline
  Cases&$\beta$ & $\lambda$ & $\mu_1$ & $\mu_2$ & $c_{\mu_2}$  \\
 \hline
  Case 1 & 0.1    & 0.1&   0.35 & 0.45 & 10  \\
 Case 2  &0.05    & 0.2&   0.35 & 0.4 & 10  \\
 Case 3    & 0.02    & 0.31&   0.33 & 0.34 & 10  \\
 \hline
\end{tabular}
\end{center}
\end{table}


We will find the improving policies by giving the Threshold $i^\ast$  of control policy $\tilde{\phi}_c$ together with $\ell$ and $T_1$ when $\mu=\mu_1$ and together with $\ell,m$ and $T_m$ when $\mu=\mu_2$.
We also give the theoretic lower bound of the average expected cost reduction. All these results come from an extension of the Python code used for \cite{Koopmans}. Next, we will use simulation to estimate the average expected cost of the improving policies following Asmussen and Glynn\cite{asmussen}. Finally, we will conclude by giving approximate confidence intervals of the average expected cost.

We note that the average expected cost under the renewal reward processes induced by the improving policies are equal to $\mathbb{E}[W]/\mathbb{E}[S]$ (which we simply refer to as the average cost).
Following Proposition~4.1 of Asmussen and Glynn~\cite{asmussen} , we can take the sample ratio given by
\begin{equation*}
    \hat{g}= \frac{\Bar{W}}{\Bar{S}}.
\end{equation*}
 as  estimator of the average cost. Subsequently, we can derive approximate 99\% confidence intervals as

 \begin{equation*}
     \hat{g}\pm \frac{z_{0.995}\hat{\eta}}{\sqrt{n}},
 \end{equation*}
 where $z_{0.995}$ is the $0.995$-quantile of $\mathcal{N}(0,1)$ and where $$\hat{\eta}^2=\frac{1}{n-1}\sum_{i=1}^n (W_i-\hat{g}S_i)^2\Big/ \Big(\frac{1}{n}\sum_{i=1}^n S_i\Big)^2.$$

\subsection{ Saved average cost when $\mu=\mu_1$}

The results for $\mu=\mu_1$ with linear holding cost function $h(i)=5\cdot i$ are all summarised in Table~\ref{table0lin}.

\begin{table}[H]
\begin{center}
\caption{Lower bounds of $\Delta_c$, thresholds $i^\ast$ and values of $\ell$ and  $T$ for improving policies and corresponding lower bounds of average cost reduction and estimated average cost reductions with corresponding $99\%$ confidence intervals for various instances with linear holding costs $h(i)=5\cdot i$ and $\mu=\mu_1$.}
\label{table0lin}
\begin{tabular}{ |p{1.1cm}|p{0.8cm}|p{1.2cm}|p{0.6cm}|p{0.6cm}|p{0.6cm}|p{1.5cm} |p{1.8cm} |p{1.8cm} |p{2.5cm} | }
 \hline
 \multicolumn{2}{|c|}{Case and $c_r$} & \multicolumn{8}{|c|}{ Results}\\
 \hline
  Case & $c_r$   &  $\Delta_c$ & $g_\mu$ & $i^\ast$ & $\ell$ & $T$ & Lower bound & Avg. cost saved & 99\% Confidence interval\\
 \hline
   Case 1 &  0.01 & 0.0008  &  2 & 5 
  &2& 
  248.732   &  $7.730\cdot 10^{-7}$ &  $1.616\cdot 10^{-4}$
  & $[1.9992,2.0005]$\\
     Case 1 &  0.005 & 0.0058  &  2 & 5 
  &2& 
  221.810   &  $6.255\cdot 10^{-6}$ & $9.290\cdot 10^{-5}$  & $[1.9992,2.0006]$\\
       Case 2 &  1.25 & 0.0306  &  $6\frac{2}{3}$  & 6
  &4& 
  814.259   &  $9.170\cdot 10^{-6}$  & 0.002838 & $[6.6601,6.6675]$\\
         Case 2 &  1 & 0.2806  &  $6\frac{2}{3}$  & 6
  &4& 
  702.074   &  $9.715\cdot 10^{-5}$  & 0.003892  & $[6.6588,6.6667]$\\
      Case 3 &  1450 & 19.870  &  $77\frac{1}{2}$  & 4
  &18& 
  57860.670   &  $8.578\cdot10^{-5}$  & 0.1804  & $[77.163,77.477]$ 
  \\
   Case 3 &  1000 & 469.870  &  $77\frac{1}{2}$  & 4
  &18& 
   44259.934   &  $2.651\cdot10^{-3}$  & 0.2324  & $[77.090,77.445]$\\
 \hline
\end{tabular}
\end{center}
\end{table}

Next, we give the same types of results for quadratic holding cost function $h(i)=i^2$.

\begin{table}[H]
\begin{center}
\caption{Lower bounds of $\Delta_c$, thresholds $i^\ast$ and values of $\ell$ and  $T$ for improving policies and corresponding lower bounds of average cost reduction and estimated average cost reductions with corresponding $99\%$ confidence intervals for various instances with quadratic holding costs $h(i)= i^2$ and $\mu=\mu_1$.}
\label{table0sqd}
\begin{tabular}{ |p{1.1cm}|p{0.9cm}|p{1.4cm}|p{0.7cm}|p{0.5cm}|p{0.5cm}|p{1.5cm} |p{1.8cm} |p{1.4cm} |p{2.4cm} | }
 \hline
 \multicolumn{2}{|c|}{Case and $c_r$} & \multicolumn{8}{|c|}{ Results}\\
 \hline
  Case & $c_r$   &  $\Delta_c$ & $g_\mu$ & $i^\ast$ & $\ell$ & $T$ & Lower bound & Avg. cost saved & 99\% Confidence interval\\
 \hline
   Case 1 &  0.045 & 0.005  &  $\frac{18}{25}$  & 4 
  &2& 
  242.510   &   $4.950\cdot10^{-6} $ & 0.04222  & $ [0.6774,0.6782] $\\
     Case 1 &  0.025 & 0.025 &  $\frac{18}{25}$  & 4 
  &2& 
  219.688   &  $2.721\cdot10^{-5} $& 0.04652
  & $ [0.6730,0.6739]$  \\
       Case 2 &  9.8 & 0.115  &  $4\frac{8}{9}$  & 4
  &4& 
  853.948   &   $3.290\cdot 10^{-5}$  & 0.08558  & $[4.7972,4.8094]$\\
         Case 2 &  8 & 1.915  &  $4\frac{8}{9}$  & 4
  &4& 
  711.673   &   $6.543\cdot 10^{-4}$ & 0.1047 & $[4.7776,4.7908]$ \\
      Case 3 &  23300 & 7.669  &  $496$  & 0
  &20& 
  74212.112   &  $2.582\cdot 10^{-5}$  & 2.2509 & $[491.41,496.09]$
 \\
   Case 3 &  20000 & 3307.669  &  $496$  & 0
  &20& 
   48148.535   &  $1.716\cdot 10^{-2}$  & 3.2106 & $[489.89,495.69]$
   \\
 \hline
\end{tabular}
\end{center}
\end{table}

For both holding cost functions and for the different traffic intensities we see similar results; as $\Delta_c$ increases, there is a decrease of $T_1$ and an increase in the average expected cost saved. The lower bound of the average expected cost reduction seems much lower than the actual average cost saved but as $\Delta_c$ increases, the lower bound steeply increases. In many cases the approximate 99\% confidence interval contains the average cost of the system without control. This limits the strength of conclusions that can be drawn from the observed average expected cost reductions.
Finally, we note that the approximate confidence intervals are wider for cases with a higher traffic intensity due to the substantial increase in simulation time resulting from longer renewal times.

\subsection{Saved average cost when $\mu=\mu_2$}

For $\mu=\mu_2$ we give the results again, first for linear holding cost function $h(i)=5\cdot i$, and then for quadratic holding cost function $h(i)=i^2$.

\begin{table}[H]
\begin{center}
\caption{Lower bounds of $\Delta_c$, thresholds $i^\ast$ and values of $\ell,m$ and  $T_m$ for improving policies and corresponding lower bounds of average cost reduction and estimated average cost reductions with corresponding $99\%$ confidence intervals for various instances with linear holding costs $h(i)= 5\cdot i$ and $\mu=\mu_2$.}
\label{table1lin}
\begin{tabular}{ |p{1.05cm}|p{0.5cm}|p{1.0cm}|p{0.5cm}|p{0.2cm}|p{0.2cm}|p{0.7cm}|p{1.5cm} |p{1.8cm} |p{1.5cm} |p{2.8cm} | }
 \hline
 \multicolumn{2}{|c|}{Case and $c_r$} & \multicolumn{9}{|c|}{ Results}\\
 \hline
  Case & $c_r$   &  $\Delta_c$ & $g_\mu$ & $i^\ast$ & $\ell$ & $m$ & $T_m$ & Lower bound & Avg. cost saved & 99\% Confidence interval\\
 \hline
   Case 1 &  94 & 0.910  &  $11 \frac{3}{7}$ & 6 
  &0& 37 &
  295.045   &  $2.486\cdot 10^{-4}$ & 0.02546 & $ [11.4023,11.4040] $ \\
     Case 1 &  80 & 14.910  &  $11\frac{3}{7}$ & 6 
  &0& 29 &
  224.956   &  $5.288 \cdot 10^{-3}$ & 0.2654 & $[11.1621,11.1642]$\\
       Case 2 &  170 & 0.944  &  $15$  & 7
  &1& 75 &
  1070.129   &  $7.216\cdot 10^{-5}$  & 0.02234 & $[14.9754,14.9799]$ \\
         Case 2 &  150 & 20.944  &  $15$  & 7
  &1& 57 &
  798.006  &  $2.134\cdot 10^{-3}$  & 0.1237  & $[14.8737, 14.8789 ]$\\
      Case 3 &  125 & 4.323  &  $61\frac{2}{3}$  & 5
  &6& 1072 &
  90574.437  &  $3.975\cdot10^{-6}$  & 0.009599 & $[61.5811,61.7330]$\\
   Case 3 &  100 & 29.323 &  $61\frac{2}{3}$  & 5
  &6& 1003 &
   83811.938   &  $1.920\cdot10^{-5}$  & 0.005311 & $[61.5821,61.7406]$\\
 \hline
\end{tabular}
\end{center}
\end{table}

\begin{table}[H]
\begin{center}
\caption{Lower bounds of $\Delta_c$, thresholds $i^\ast$ and values of $\ell,m$ and  $T_m$ for improving policies and corresponding lower bounds of average cost reduction and estimated average cost reductions with corresponding $99\%$ confidence intervals for various instances with quadratic holding costs $h(i)= i^2$ and $\mu=\mu_2$.}
\label{table1sqd}
\begin{tabular}{ |p{1.05cm}|p{0.5cm}|p{1.0cm}|p{0.7cm}|p{0.2cm}|p{0.2cm}|p{0.7cm}|p{1.7cm} |p{1.8cm} |p{1.5cm} |p{2.5cm} | }
 \hline
 \multicolumn{2}{|c|}{Case and $c_r$} & \multicolumn{9}{|c|}{ Results}\\
 \hline
  Case & $c_r$   &  $\Delta_c$ & $g_\mu$ & $i^\ast$ & $\ell$ & $m$ & $T_m$ & Lower bound & Avg. cost saved & 99\% Conf. interval\\
 \hline
   Case 1 &  95 & 2.219  &  $10 \frac{22}{49}$ & 5 
  &0& 34 &
 280.835   &  $6.358\cdot 10^{-4}$ & 0.1067  & $[10.341,10.343]$\\
     Case 1 &  80 & 17.219  &  $10\frac{22}{49}$ & 5 
  &0& 29 &
  234.633   &  $5.866\cdot 10^{-3}$ & 0.3656 & $[10.082,10.084]$\\
       Case 2 &  170 & 2.613  &  $13$  & 5
  &1& 71 &
  1061.860   &  $2.013\cdot 10^{-4}$  & 0.1861
  & $[12.811,12.817]$
  \\
         Case 2 &  150 & 22.613  &  $13$  & 5
  &1& 58 &
  867.672   &  $2.123\cdot 10^{-3}$  & 0.3116
  &$[12.685,12.692]$
  \\
      Case 3 &  54 & 0.389  &  $233\frac{8}{9}$  & 1
  & 5 & 1212 &
  107476.358   &  $3.015\cdot10^{-7}$  & 0.2546
  & $[232.89,234.38]$
  \\
   Case 3 &  40 & 14.389 &  $233\frac{8}{9}$  & 1
  &5& 1049 &
   91395.169   &  $1.311\cdot10^{-5}$  & 0.1216
   & $[232.95,234.58]$
   \\
 \hline
\end{tabular}
\end{center}
\end{table}

The conclusions one can draw when $\mu=\mu_2$ are similar to the conclusions for the case $\mu=\mu_1$. For Cases 1 and 2, we note a seemingly steeper increase of the average cost reduction as $\Delta_c$ increases. For Case 3 we do not observe a cost reduction as $\Delta_c$ increases which we deem to result from the high uncertainty in the actual average expected cost reduction.
Additionally we get insight in the decrease of $m$ as $\Delta_c$ increases.

\appendix


\bibliographystyle{amsrefs}
\bibliography{AvgCostRefs}

\appendix

\end{document}